\theoremstyle{theorem}
\newtheorem{theorem}{Theorem}[section]
\newtheorem{lemma}[theorem]{Lemma}
\newtheorem{corollary}[theorem]{Corollary}
\newtheorem{proposition}[theorem]{Proposition}
\theoremstyle{definition}
\newtheorem{definition}[theorem]{Definition}
\newtheorem{question}[theorem]{Question}
\newtheorem{example}[theorem]{Example}
\newtheorem{remark}[theorem]{Remark}
\numberwithin{equation}{section}
\newcommand{\Int}{\mathrm{Int}}
\newcommand{\Span}{\mathrm{Span}}
\newcommand{\Hom}{\mathrm{Hom}}
\newcommand{\Ext}{\mathrm{Ext}}
\newcommand{\trop}{\mathrm{trop}}
\newcommand{\dpsum}[2]{\displaystyle{\sum_{#1}^{#2}}}
\newcommand{\pd}[2]{\frac{\partial #1}{\partial #2}}
\newcommand{\inner}[1]{\langle #1\rangle}
\newcommand{\bb}[1]{\mathbb{#1}}
\newcommand{\cu}[1]{\mathcal{#1}}
\newcommand{\til}[1]{\widetilde{#1}}
\newcommand{\ol}[1]{\overline{#1}}
\newcommand{\mf}[1]{\mathfrak{#1}}
\newcommand{\ul}[1]{\underline{#1}}
\def\bR{\mathbb{R}}
\def\bC{\mathbb{C}}
\def\bP{\mathbb{P}}
\def\bL{\mathbb{L}}
\def\vol{\text{\rm vol}}
\def\ev{\text{\rm ev}}
\def\Hom{\text{\rm Hom}}
\begin{document}
	
	\title[]{Lagrangian multi-sections and their toric equivariant mirror}

	\author[Oh]{Yong-Geun Oh}
\address{Center for Geometry and Physics, Institute for Basic Science (IBS),
79 Jigok-ro 127beon-gil, Nam-gu, Pohang, Gyeongsangbuk-do, Korea 37673}
\email{yongoh1@postech.ac.kr}

	\author[Suen]{Yat-Hin Suen}
\address{Center for Geometry and Physics, Institute for Basic Science (IBS),
79 Jigok-ro 127beon-gil, Nam-gu, Pohang, Gyeongsangbuk-do, Korea 37673}
    \curraddr{School of Mathematics, Korea Institute for Advanced Study,
    85 Hoegiro Dongdaemun-gu, Seoul, Korea 02455}
	\email{yhsuen@kias.re.kr}

	\date{\today}

	\begin{abstract}
		The SYZ conjecture suggests a folklore that ``Lagrangian multi-sections are mirror to holomorphic vector bundles". In this paper, we prove this folklore for Lagrangian multi-sections inside the cotangent bundle of a vector space, which are equivariantly mirror to complete toric varieties by the work of Fang-Liu-Treumann-Zaslow. We also introduce the \emph{Lagrangian realization problem}, which asks whether one can construct an unobstructed Lagrangian multi-section with asymptotic conditions prescribed by a tropical Lagrangian multi-section. We solve the realization problem for tropical Lagrangian multi-sections over a complete 2-dimensional fan that satisfy the so-called $N$-generic condition. As an application, we show that every rank 2 toric vector bundle on the projective plane is mirror to a Lagrangian multi-section.
	\end{abstract}

	\maketitle
	
	\tableofcontents
	
	\section{Introduction}
	
	In \cite{CCC_HMS}, Fang-Liu-Treumann-Zaslow proved the \emph{equivariant homological mirror symmetry} of complete toric variety $X_{\Sigma}$ via \emph{coherent-constructible correspondence} (abbreviated as CCC) \cite{CCC} $$\kappa:\cu{P}erf_T(X_{\Sigma})\xrightarrow{\sim} Sh_{cc}(M_{\bb{R}},\Lambda_{\Sigma})$$
	and \emph{microlocalization} \cite{NZ, Nadler} $$\mu_{\Sigma}:Sh_{cc}(M_{\bb{R}},\Lambda_{\Sigma})\xrightarrow{\sim}\cu{F}uk(Y,\Lambda_{\Sigma}).$$
	Here, $Y:=T^*M_{\bb{R}}=M_{\bb{R}}\times N_{\bb{R}}$ and
	\begin{itemize}
		\item $\cu{P}erf_T(X_{\Sigma})$ is the category of equivariant perfect complexes.
		\item $Sh_{cc}(M_{\bb{R}},\Lambda_{\Sigma})$ is the dg-category of complexes of sheaves that are cohomologically constructible, compactly supported and with singular support laying in the conical Lagrangian subset
		$$\Lambda_{\Sigma}:=\bigcup_{\tau\in\Sigma}(\tau^{\perp}+M)\times(-\tau).$$
		\item $\cu{F}uk(Y,\Lambda_{\Sigma})$ is the Fukaya category generated by exact Lagrangian submanifolds with asymptotic condition prescribed by $\Lambda_{\Sigma}$.
	\end{itemize}
	Furthermore, they realized the functor $\mu_{\Sigma}\circ\kappa:\cu{P}erf_T(X_{\Sigma})\to \cu{F}uk(Y,\Lambda_{\Sigma})$ as a $T$-duality suggested by the celebrated SYZ program \cite{SYZ}. Namely, toric line bundles on $X_{\Sigma}$ correspond to Lagrangian sections of the projection map $p_{N_{\bb{R}}}:Y\to N_{\bb{R}}$ via SYZ transform. Moreover, as $\cu{P}erf_T(X_{\Sigma})$ is generated by line bundles, one can immediately deduce that $\cu{F}uk(Y,\Lambda_{\Sigma})$ is generated by Lagrangian sections of $p_{N_{\bb{R}}}:Y\to N_{\bb{R}}$.
	
	Despite the beauty of such homological mirror symmetry (abbreviated as HMS) statement, it doesn't tell us how geometric objects correspond to each other due to the abstract algebraic construction of the Fukaya category. A question that we want to address in this paper is the following question.
	
	\begin{question}
		What kind of objects in $\cu{F}uk(Y,\Lambda_{\Sigma})$ are mirror to toric vector bundles?
	\end{question}
	
	Before giving a symplecto-geometric answer to this question, Treumann has already provided a sheaf-theoretic condition to determine when a perfect complex on $X_{\Sigma}$ is quasi-isomorphic to a 1-term complex, that is, a single toric vector bundle.
	
	\begin{theorem}[=Theorem 1.9 in \cite{Morse_theory_TVB}]
		Let $X_{\Sigma}$ be a complete toric variety. Let $\cu{E}^{\bullet}\in\cu{P}erf_T(X_{\Sigma})$ and $\kappa(\cu{E}^{\bullet})\in Sh_{cc}(M_{\bb{R}},\Lambda_{\Sigma})$ be the corresponding complex of constructible sheaves. Then the following statements are equivalent:
		\begin{itemize}
			\item [(a)] $\cu{E}^{\bullet}$ is quasi-isomorphic to a 1-term complex concentrated at degree 0.
			\item [(b)] For any $m\in M$, $\sigma\in\Sigma_{max}$ and $\xi\in\Int(\sigma)$, the microlocal stalk
			$$\mu_{m,-\xi}(\kappa(\cu{E}^{\bullet}))\in Sh(pt)$$
			is concentrated at degree 0.
		\end{itemize}
	\end{theorem}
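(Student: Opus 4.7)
The plan is to use the coherent--constructible correspondence $\kappa$ to translate the problem into a sheaf-theoretic statement, then exploit the $T$-equivariant structure on the coherent side. Set $F := \kappa(\cu{E}^{\bullet})$. Since $\kappa$ is a quasi-equivalence, condition (a) is equivalent to $F$ being quasi-isomorphic to a single constructible sheaf concentrated in degree $0$; once this is known, applying $\kappa^{-1}$ produces an equivariant coherent sheaf in degree $0$, and the perfectness hypothesis forces it to be locally free, i.e., a toric vector bundle.

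The bridge between (b) and the structure of $\cu{E}^{\bullet}$ is the following dictionary that I would aim to establish: for $\xi\in\Int(\sigma)$ with $\sigma\in\Sigma_{max}$, the microlocal stalk $\mu_{m,-\xi}(F)$ corresponds, under CCC, to the weight-$m$ component of the (derived) fiber of $\cu{E}^{\bullet}$ at the torus-fixed point $x_{\sigma}\in X_{\Sigma}$. To prove this, I would use the explicit local structure of $\kappa$ on the affine chart $U_{\sigma}$: an equivariant perfect complex there decomposes into weight components, each of which maps under $\kappa$ to a standard ``star'' constructible sheaf on $M_{\bb{R}}$ supported on a translate of $-\sigma^{\vee}$. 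A direct calculation of the microlocal stalk of such a star sheaf at an interior point of $-\sigma$ should recover exactly the corresponding weight piece, sitting in degree $0$.

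Given this dictionary, (a)$\Rightarrow$(b) is immediate: a vector bundle has all derived fibers concentrated in degree $0$, hence so do all weight pieces of these fibers at fixed points. For (b)$\Rightarrow$(a), I would invoke a standard fact about $T$-equivariant perfect complexes on a complete toric variety: if the derived fiber at every torus-fixed point $x_{\sigma}$ is concentrated in degree $0$, then by $T$-equivariance and upper-semicontinuity (equivalently, by a Bialynicki--Birula style deformation argument flowing an arbitrary point into a fixed point via one-parameter subgroups of $T$), the derived fiber at every point is concentrated in degree $0$; a perfect complex with this property is quasi-isomorphic to a locally free sheaf in degree $0$.

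The main obstacle is establishing the dictionary of the second paragraph with all signs and shifts in place. Neither $\kappa$ nor the microlocalization functor $\mu_{\Sigma}$ is manifestly local on $X_{\Sigma}$, so tracking what a microlocal stalk corresponds to coherently requires unpacking both functors carefully, keeping track of the Maslov/degree shifts introduced by $\mu_{\Sigma}$ so that ``fiber in degree $0$'' on the coherent side really does match ``microlocal stalk in degree $0$'' on the constructible side without ambiguity. Once this identification is pinned down, the remaining steps are either formal (CCC quasi-equivalence, perfect-plus-degree-$0$ implies locally free) or standard equivariant algebraic geometry.
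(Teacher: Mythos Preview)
The paper does not prove this theorem: it is quoted verbatim from Treumann's paper \cite{Morse_theory_TVB} (Theorem~1.9 there) and used as a black box, both in the introduction and in Section~\ref{sec:prelim}. So there is no proof in the present paper to compare your proposal against.

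That said, your outline is correct and is essentially Treumann's own argument. The dictionary you propose in the second paragraph---that $\mu_{m,-\xi}(\kappa(\cu{E}^{\bullet}))$ computes the weight-$m$ piece of the derived fibre $\cu{E}^{\bullet}|_{x_{\sigma}}$ at the torus fixed point---is precisely Theorem~1.7 of \cite{Morse_theory_TVB}, and the present paper invokes exactly this identification in the remark following Corollary~\ref{cor:MS_gives_bundle}. Your (b)$\Rightarrow$(a) step is also correct once stated carefully: for each $i\neq 0$ the locus $\{y : H^i(\cu{E}^{\bullet}\otimes^{\mathrm{L}} k(y))\neq 0\}$ is closed (upper semicontinuity for perfect complexes) and $T$-invariant, hence if nonempty it contains a torus fixed point by completeness; condition~(b) then forces it to be empty, so all derived fibres are concentrated in degree~$0$, and a perfect complex with this property is locally free.

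The one place where your sketch is vague is the ``main obstacle'' you yourself flag: pinning down the dictionary with correct shifts. This is genuine work (it occupies much of Treumann's paper), but you have identified it accurately rather than glossed over it, so the proposal is honest about where the content lies.
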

	
	To answer Question 1.1, we borrow an idea from the SYZ proposal and other works \cite{LYZ, CS_SYZ_imm_Lag}. As Lagrangian sections are mirror to holomorphic line bundles, \emph{Lagrangian multi-sections} should be mirror to holomorphic vector bundles. By a Lagrangian multi-section we mean a (graded but not necessarily exact) Lagrangian immersion $i:\til{L}\to Y$ that has at most finitely many double point singularities and the composition $p_{N_{\bb{R}}}\circ i:\til{L}\to N_{\bb{R}}$ is a branched covering map (Definition \ref{def:branced_covering} and \ref{def:LMS}).
	We denote by $L \subset Y$ the image $i(\widetilde L)$.
	
	Note that having a non-trivial branching set here is important as $N_{\bb{R}}$ is contractible. Otherwise, all multi-sections are simply disjoint union of sections, which can only be mirror to a direct sum of line bundles. To a Lagrangian multi-section $\bb{L}$, we can associate a \emph{canonical grading} so that the degree of intersection between $\bb{L}$ and a generic fiber $F_{\xi}\subset Y$ of $p_{N_{\bb{R}}}:Y\to N_{\bb{R}}$ is always $n$, the dimension of $M_{\bb{R}}$. A Lagrangian multi-section equipped with such grading is called \emph{canonically graded}. However, due to the non-exactness of general multi-sections, we need to pass to Novikov field $\bb{K}$. There is a natural $A_{\infty}$-embedding
$$
\cu{F}uk(Y,\Lambda_{\Sigma})\hookrightarrow \cu{F}uk_{\bb{K}}^0(Y,\Lambda_{\Sigma}),
$$
where $\cu{F}uk_{\bb{K}}^0(Y,\Lambda_{\Sigma})$ denotes the \emph{tautologically unobstructed immersed Fukaya category over Novikov field $\bb{K}$} whose objects have similar asymptotic behaviour prescribed by $\Lambda_{\Sigma}$. Such asymptotic condition ensures holomorphic disks bounded by such Lagrangian immersion won't escape to infinity and gives rise to a well-controlled Floer theory. It turns out that the category $\cu{F}uk_{\bb{K}}^0(Y,\Lambda_{\Sigma})$ is quasi-equivalent to the exact Fukaya category $\cu{F}uk(Y,\Lambda_{\Sigma})$, but defined over the Novikov field $\bb{K}$ (Proposition \ref{prop:surjective}). In particular, equivariant HMS holds for the Fukaya category $\cu{F}uk_{\bb{K}}^0(Y,\Lambda_{\Sigma})$ and the category of perfect complexes of the toric variety $X_{\Sigma}$ defined over $\bb{K}$. The first question is to determine whether or not 
a given Lagrangian multi-section is an object of $\cu{F}uk_{\bb{K}}^0(Y,\Lambda_{\Sigma})$.
	
	\begin{theorem}[=Theorem \ref{thm:unobs}]
		For $n=2$, any embedded Lagrangian multi-sections of $Y$ are tautologically unobstructed. In particular, we have $HF^{\bullet}(\bb{L},\bb{L})=H_{dR}^{\bullet}(L;\bb{K})$.
	\end{theorem}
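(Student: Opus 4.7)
The plan is to prove the tautological unobstructedness of an embedded, canonically graded Lagrangian multi-section $\bb{L}$ in $Y = T^*\bb{R}^2$ by a direct virtual-dimension count on moduli of $J$-holomorphic disks, and then to deduce the Floer cohomology formula by the same count applied to all higher $m_k$.

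Canonical grading provides a lift of the Gauss map $L \to \Lambda(n,\bb{R})$ to the universal Maslov cover, hence the Maslov class $\mu_L \in H^1(L;\bb{Z})$ vanishes and $\mu(\beta)=0$ for every relative class $\beta\in\pi_2(Y,L)$. The unparametrized moduli space $\mathcal{M}(\beta)$ of non-constant $J$-holomorphic disks $u\colon(D^2,\partial D^2)\to(Y,L)$ in class $\beta$ therefore has virtual dimension
\[
\dim\mathcal{M}(\beta) \;=\; n + \mu(\beta) - 3 \;=\; 2 + 0 - 3 \;=\; -1.
\]
Since $\bb{L}$ is embedded, standard transversality for disks with totally real boundary applies: for a generic $\Lambda_\Sigma$-tame almost complex structure $J$, $\mathcal{M}(\beta)=\emptyset$ for every non-constant $\beta$. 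Consequently every moduli $\mathcal{M}(\beta,k)$ with boundary marked points (a fibre bundle over $\mathcal{M}(\beta)$) is empty as well, so the obstruction cocycle $m_0 = \sum_{\beta\neq0}T^{\omega(\beta)}\mathrm{ev}_*[\mathcal{M}(\beta,1)]$ vanishes identically, giving tautological unobstructedness. The restriction $n=2$ is essential: for $n\geq 3$ one has $n+\mu-3\geq 0$, so the moduli need not be empty after transversality and $m_0$ could receive non-trivial contributions.

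For the Floer cohomology identification, embeddedness removes all double-point generators from $CF^\bullet(\bb{L},\bb{L})$, reducing it to $\Omega^\bullet(L)\otimes_{\bb{R}}\bb{K}$ in the de Rham model, and the same virtual-dimension count shows every non-constant-disk contribution to $m_1$ and the higher $m_k$ vanishes for the chosen $J$. Hence $m_1$ agrees with the de Rham differential $d$, identified with the Floer differential via the standard perturbation inside a Weinstein neighbourhood of $L$, yielding $HF^\bullet(\bb{L},\bb{L})\cong H^\bullet_{dR}(L;\bb{K})$. The main technical obstacle is to coordinate transversality with asymptotic compactness: the chosen $J$ must be $\Lambda_\Sigma$-tame enough to rule out disks escaping to the non-compact ends of $Y$, while still generic enough for the interior dimension count to upgrade from ``generically empty interior strata'' to genuine emptiness of the global moduli space, which is what finally validates $m_0\equiv 0$.
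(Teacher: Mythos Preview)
Your argument is correct and takes a genuinely simpler route than the paper's. Both reach the same endpoint—showing that for generic $J$ there are no non-constant holomorphic disks on $L$—but by different means.

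The paper first proves a topological lemma specific to multi-sections (Lemma~\ref{lem:intersection}): any non-constant disk bounded by $L$ must intersect the preimage $p_{N_{\bb{R}}}^{-1}(B_{\bb{L}})$ of the branch locus, since away from it $L$ is an honest covering and one could cap off the disk to a sphere in an exact manifold. It then runs two evaluation-transversality fiber-product counts against this codimension-$2$ set (one for boundary marked points landing on $S_{\bb{L}}$, one for interior marked points landing on $p_{N_{\bb{R}}}^{-1}(B_{\bb{L}})$), obtaining dimensions $n-4$ and $n-3$ respectively; for $n=2$ both are negative, so generically no disk meets $p_{N_{\bb{R}}}^{-1}(B_{\bb{L}})$, contradicting the lemma. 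This argument exploits the branched-cover structure of $\bb{L}$ explicitly.

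Your route bypasses the branch locus entirely: gradedness forces $\mu(\beta)=0$, so the unmarked moduli has virtual dimension $n-3=-1$, and generic emptiness follows. This works for \emph{any} embedded graded Lagrangian in an exact symplectic $4$-manifold, not just multi-sections. The paper's approach, by contrast, uses the multi-section geometry and the boundary count survives up to $n=3$, which may be why the authors organised it that way.

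One point you should make explicit: ``standard transversality'' only regularises somewhere-injective disks directly, so the sentence ``$\mathcal M(\beta)=\emptyset$ for every non-constant $\beta$'' is not immediate. To conclude emptiness for \emph{all} $\beta$, invoke Lazzarini's (or Kwon--Oh's) decomposition: every non-constant disk has a simple disk underlying its image, and simple disks in every class have virtual dimension $-1$, hence are generically absent; therefore no multiply-covered disks exist either. Without this step, multiple covers are not excluded by negative virtual dimension alone. The paper's evaluation-transversality argument has the same underlying issue, so this is not a disadvantage of your approach, just something to spell out.
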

	
	Our first main result is to provide a partial symplecto-geometric interpretation of Treumann's result.
	
	\begin{theorem}[=Theorem \ref{thm:degree0}]
		If $\bb{L}\in \cu{F}uk_{\bb{K}}^0(Y,\Lambda_{\Sigma})$ is quasi-isomorphic to a canonically graded Lagrangian multi-section, then for any $m\in M$ and $\sigma\in\Sigma(n)$, the Floer complex $\Hom_{\cu{F}uk(Y)}(D_{m,-\sigma}[-n],\bb{L})$ is concentrated at degree 0, where $D_{m,-\sigma}\subset Y$ is the linking disk associated to $(m,-\sigma)$.
	\end{theorem}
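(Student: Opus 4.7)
The plan is to compute $\Hom_{\cu{F}uk(Y)}(D_{m,-\sigma}[-n],\bb{L})$ directly as an intersection Floer complex and to verify that every generator sits in degree $0$. Since Floer cohomology in $\cu{F}uk_{\bb{K}}^0(Y,\Lambda_{\Sigma})$ is invariant under quasi-isomorphism of either argument, I may replace $\bb{L}$ by the canonically graded Lagrangian multi-section $i\colon\til{L}\to Y$, with image $L=i(\til{L})$, to which it is quasi-isomorphic. Concentration of the chain complex in a single degree then immediately propagates to the cohomology, so no analysis of the Floer differential is required beyond identifying the generators and their gradings.

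The next step is to enumerate the intersections. Because $\sigma\in\Sigma(n)$ is top-dimensional, $\sigma^{\perp}=\{0\}$, so the only stratum of $\Lambda_{\Sigma}$ passing through $(m,-\xi)$ for $\xi\in\Int(\sigma)$ is $\{m\}\times(-\sigma)$. The linking disk $D_{m,-\sigma}$ is, by construction, a small Lagrangian disk whose asymptotic behaviour is supported precisely on this stratum. After a small Hamiltonian perturbation, the transverse points of $L\cap D_{m,-\sigma}$ are in bijection with the sheets of the branched cover $p_{N_{\bb{R}}}\circ i\colon\til{L}\to N_{\bb{R}}$ whose $\Lambda_{\Sigma}$-asymptotic over the chamber $-\sigma$ is exactly $\{m\}\times(-\sigma)$, i.e.\ the sheets carrying the weight $m$ at the cone $\sigma$. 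In particular, the Floer complex is a finitely generated free $\bb{K}$-module whose rank records this weight multiplicity.

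For each such intersection $x$, I will compute its Maslov degree. By definition the canonical grading on $\bb{L}$ is fixed so that every transverse intersection of $L$ with a generic cotangent fibre $F_{\xi}$ has degree $n$. The linking disk, on the other hand, carries the Nadler--Zaslow grading calibrated so that, under $\mu_{\Sigma}$, it represents the microstalk functor $\mu_{m,-\xi}$ up to a shift by $n$. Locally near the asymptotic stratum $\{m\}\times(-\sigma)$, each contributing sheet of $\bb{L}$ can be written as a Lagrangian graph over $N_{\bb{R}}$ and the linking disk as a small perturbation of a fibre-type Lagrangian, so reading the two gradings against the same local symplectic framing gives the degree as $n$ (from the fibre-like local model paired against the canonical grading) minus the explicit shift $n$ coming from $[-n]$, i.e.\ zero.

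The main obstacle is this last degree calculation: one must match, in a single local Maslov computation, the canonical grading convention for Lagrangian multi-sections introduced in the paper with the Nadler--Zaslow grading convention that characterizes $D_{m,-\sigma}$. All other steps---reduction by quasi-isomorphism invariance, identification of the generators via the asymptotic stratification of $\Lambda_{\Sigma}$, and transversality via small Hamiltonian perturbation---are formal consequences of the framework already set up in \cite{CCC_HMS, NZ, Nadler} together with the definition of a canonically graded Lagrangian multi-section. Once the local degree is shown to equal zero at every generator, the chain complex, and hence its cohomology, is concentrated in degree $0$.
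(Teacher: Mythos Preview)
Your proposal is correct and follows essentially the same route as the paper's proof: replace $\bb{L}$ by the multi-section via quasi-isomorphism invariance, identify the generators as intersections with the linking disk, and reduce the grading computation to the canonical grading against a fibre. The paper makes your ``main obstacle'' evaporate by exploiting two choices already built into its setup: first, it picks a representative $D_{m,-\xi_0}$ of the linking disk that literally coincides with the fibre $F_{-\xi_0}$ on $T^*U_{m,-\xi_0}$ (so that $D_{m,-\xi_0}\cap L = F_{-\xi_0}\cap T^*U_{m,-\xi_0}\cap L$), and second, it has already equipped $D_{m,-\xi}$ with the grading $\theta_{m,-\xi}=\frac{n\pi}{2}$ on $D_{m,-\xi}\cap F_{-\xi}$, matching the canonical fibre grading. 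With these two ingredients the degree is read off directly from the definition of the canonical grading on $\bb{L}$, with no further local Maslov computation needed.
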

	
	As $\Hom_{\cu{F}uk_{\bb{K}}^0(Y)}(D_{m,-\sigma}[-n],\bb{L})$ computes the microlocal stalk of the sheaf associated to $\bb{L}$ (Theorem \ref{thm:microlocalstalk} in Section \ref{sec:Fuk_microloca} or Theorem 1.1 in \cite{GPS_microlocal}), Theorem 1.4 allows us to deduce the following corollary.
	
	\begin{corollary}[=Corollary \ref{cor:MS_gives_bundle}]
		The mirror of a canonically graded $\Lambda_{\Sigma}$-admissible Lagrangian multi-section is quasi-isomorphic to a toric vector bundle over $X_{\Sigma}$.
	\end{corollary}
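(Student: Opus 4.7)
The plan is to combine Theorem~\ref{thm:degree0} with Treumann's sheaf-theoretic criterion (Theorem 1.2 above), linked by the identification between Floer complexes and microlocal stalks. Let $\bb{L}$ be a canonically graded $\Lambda_{\Sigma}$-admissible Lagrangian multi-section, so in particular $\bb{L}\in\cu{F}uk_{\bb{K}}^0(Y,\Lambda_{\Sigma})$. Under the equivariant HMS equivalence extended to Novikov coefficients (Proposition~\ref{prop:surjective}), $\bb{L}$ is mirror to some $\cu{E}^{\bullet}\in\cu{P}erf_T(X_{\Sigma})$ defined over $\bb{K}$. The corollary then amounts to showing that $\cu{E}^{\bullet}$ is quasi-isomorphic to a 1-term complex concentrated in degree zero, which by definition is a single toric vector bundle.

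The steps are the following. First, for each $m\in M$, $\sigma\in\Sigma(n)$ and $\xi\in\Int(\sigma)$, I would identify the Floer hom-space
$\Hom_{\cu{F}uk_{\bb{K}}^0(Y)}(D_{m,-\sigma}[-n],\bb{L})$
with the microlocal stalk $\mu_{m,-\xi}(\kappa(\cu{E}^{\bullet}))$ via the microlocalization/Fukaya comparison (Theorem~\ref{thm:microlocalstalk}, or Theorem 1.1 of \cite{GPS_microlocal}). Next, apply Theorem~\ref{thm:degree0} to conclude that every such Floer complex is concentrated in degree zero; by the previous identification, the same holds for every microlocal stalk $\mu_{m,-\xi}(\kappa(\cu{E}^{\bullet}))$. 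Finally, Treumann's Theorem 1.2 — whose hypothesis (b) is now verified for every $m$ and every maximal cone $\sigma\in\Sigma_{\max}$ — yields that $\cu{E}^{\bullet}$ is quasi-isomorphic to a 1-term complex concentrated in degree zero, as desired.

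The genuinely non-formal ingredient, Theorem~\ref{thm:degree0}, has already been proved, so the corollary reduces to a formal chaining of previously established equivalences. The only point requiring care is the first step: one must ensure that the microlocal-stalk formula, originally formulated for the exact Fukaya category on the sheaf side, transports cleanly to the tautologically unobstructed Novikov setting $\cu{F}uk_{\bb{K}}^0$, and that the sign and orientation conventions relating the linking disk $D_{m,-\sigma}[-n]$ to the microlocal codirection $-\xi\in-\sigma$ are lined up consistently with the shift by $[-n]$. I expect these to be bookkeeping checks rather than new analytic input; the hard analytic work sits in Theorems~\ref{thm:unobs} and \ref{thm:degree0}, after which nothing further needs to be computed.
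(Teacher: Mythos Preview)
Your proposal is correct and follows essentially the same route as the paper: the paper simply states that the corollary follows by combining Theorem~\ref{thm:degree0}, equivariant CCC, HMS for toric varieties, and Treumann's Theorem~1.2, which is exactly the chain you spell out (with Theorem~\ref{thm:microlocalstalk} supplying the link between Floer complexes and microlocal stalks in the Novikov setting). Your additional remarks about bookkeeping for the linking-disk shift and the transport to $\cu{F}uk_{\bb{K}}^0$ are reasonable caveats, but as you anticipate they are handled by Theorem~\ref{thm:microlocalstalk} and require no new input.
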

	
	In Section \ref{sec:TGRC}, we recall the notion of tropical Lagrangian multi-sections, which was introduced in \cite{branched_cover_fan, Suen_trop_lag}. Each tropical Lagrangian multi-section $\bb{L}^{\trop}$ determines a Lagrangian subset $\Lambda_{\bb{L}^{\trop}}\subset \Lambda_{\Sigma}\subset Y$. We then ask ourselves the following fundamental question.
	
	\begin{question}
		Given a tropical Lagrangian multi-section $\bb{L}^{\trop}$ over a complete fan $\Sigma$, is there a tautologically unobstructed Lagrangian multi-section $\bb{L}\in \cu{F}uk_{\bb{K}}^0(Y,\Lambda_{\Sigma})$ such that $\bb{L}^{\infty}\subset\Lambda_{\bb{L}^{\trop}}^{\infty}$?
	\end{question}
	
	Question 1.6 is called the \emph{Lagrangian realization problem}. If $\bb{L}^{\trop}$ can be realized by a Lagrangian multi-section $\bb{L}$ in the sense of Question 1.6, by Corollary 1.5, its mirror is a toric vector bundle $\cu{E}_{\bb{L}}$ whose associated tropical Lagrangian multi-section $\bb{L}_{\cu{E}_{\bb{L}}}^{\trop}$ is nothing but $\bb{L}^{\trop}$. However, in \cite{Suen_trop_lag}, the second-named author gave an example of a 2-fold tropical Lagrangian multi-section over the fan of $\bb{P}^2$ that \emph{does not} arise from toric vector bundles on $\bb{P}^2$. By mirror symmetry, we should not expect it can be realized by an unobstructed Lagrangian submanifold in $Y$. Hence there must be some extra assumptions that need to be put on $\bb{L}^{\trop}$ in order to get an affirmative answer to the realization problem. In Section \ref{sec:tangent}, we restrict ourselves to 2-fold tropical Lagrangian multi-sections over a complete 2-dimensional fan and introduce the \emph{$N$-generic condition} (Definition \ref{def:N_generic}), which is a pure combinatorial condition on such tropical Lagrangian multi-sections. The reason for us to consider $N$-generic objects is due to the following observation. When the underlying branched covering map of $\bb{L}^{\trop}$ is (topologically) the branched covering map $z\mapsto z^2$ on $\bb{C}$, \cite[Theorem 5.9]{Suen_trop_lag} can be applied to deduce that $\bb{L}^{\trop}$ can be realized by a rank 2 toric vector bundle over $X_{\Sigma}$ if and only if it is $N$-generic with $N\geq 3$ (Proposition \ref{prop:N>2}). Therefore, we may expect the Lagrangian realization problem can be solved when $N\geq 3$. The second main result of this work gives an affirmative answer to this.
	
	\begin{theorem}[=Theorem \ref{thm:unobs_immersed_Lag}]\label{thm:unobs_immersed_Lag-intro}
			Let $\bb{L}^{\trop}$ be a $N$-generic 2-fold tropical Lagrangian multi-section over a complete 2-dimensional fan $\Sigma$ with $N\geq 3$. Then there is a spin, graded and immersed 2-fold Lagrangian multi-section $\bb{L}$ in $Y$, whose immersed sector is concentrated at degree 1 and $\bb{L}^{\infty}\subset\Lambda_{\bb{L}^{\trop}}^{\infty}$. In particular, $\bb{L}$ is tautologically unobstructed. When $\bb{L}$ is embedded, the topology of the underlying surface has Betti numbers $b_0(\bb{L})=1,\,b_1(\bb{L})=N-3,\,b_2(\bb{L})=0$.
	\end{theorem}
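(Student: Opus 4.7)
The plan proceeds in three stages—construction of the asymptotic model from the tropical data, local interpolation across each branch point of the 2-fold cover, and verification of the analytic properties—and the hypothesis $N\ge 3$ enters decisively only in the second stage.

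First, outside a sufficiently large compact $K\subset N_{\bb{R}}$ the Lagrangian $\Lambda_{\bb{L}^{\trop}}$ already dictates two sheets of $\bb{L}$: over each maximal cone $\sigma\in\Sigma(2)$, a pair of affine potentials whose differentials are disjoint Lagrangian sections of $p_{N_{\bb{R}}}$, and over each ray either matching sheets or a piecewise-linear corner. I would set $\bb{L}$ equal to $\Lambda_{\bb{L}^{\trop}}$ outside $K$, then smooth the corners across each ray by replacing the piecewise-linear potentials with convex $C^\infty$ approximations supported in a thin neighbourhood of $\partial K$. This automatically enforces $\bb{L}^\infty\subset\Lambda_{\bb{L}^{\trop}}^\infty$ and places both sheets in degree $0$ for the canonical grading, while keeping $\bb{L}$ exact outside a compact set.

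Next, inside $K$ the two sheets must be connected across the branching locus. At each branch point I would insert a local model built from the exact Lagrangian $\{(q,p)\in T^*\bb{C}:p=\bar q\}$, which realizes the $z\mapsto z^2$ cover with a single isolated branch point and is the natural geometric counterpart of the algebraic construction underlying \cite[Theorem 5.9]{Suen_trop_lag}. The $N$-generic hypothesis with $N\ge 3$ is precisely what provides the combinatorial slack to glue this local model to the pre-existing sheets with compatible $1$-jets; at $N\le 2$ the matching conditions become overdetermined, mirroring the known obstruction of \cite{Suen_trop_lag}. Where the matching admits a smooth Lagrangian handle I use it, producing the embedded case; otherwise I insert a transverse double point, producing the generically immersed case. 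This stage is the main obstacle: proving that $N\ge 3$ is exactly sufficient should reduce to a codimension count in which the dimension of the space of admissible smooth interpolants is linear in $N$ and vanishes exactly when the construction fails, matching the known failure of non-$N$-generic tropical Lagrangian multi-sections to arise from toric vector bundles.

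Finally, I would verify the four promised properties. A spin structure on the oriented surface $\til{L}$ exists and extends uniquely across the simply connected surgery pieces. The degree of each immersed sector equals $1$ by a direct Maslov-index computation in the $\{p=\bar q\}$ local model. Tautological unobstructedness then follows from the preceding dimension-$2$ result (Theorem~1.3) in the embedded case, and in the immersed case from the fact that no teardrop of degree $0$ can bound an immersed sector of degree $1$. In the embedded case, Riemann--Hurwitz applied to the branched cover $\til{L}\to N_{\bb{R}}$ combined with a Mayer--Vietoris decomposition computes $\chi(\til{L})$; connectivity and non-compactness give $b_0=1$ and $b_2=0$, which then force the stated value $b_1=N-3$.
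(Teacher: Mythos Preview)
Your outer smoothing is in the right spirit, but the inner local model is where the argument breaks. The surface $\{p=\bar q\}$ (or any single simple branch-point model) is a disk; gluing it in at the unique cone point of $\bb{L}^{\trop}$ in Case~(O) produces a surface with $b_1=0$ regardless of $N$, contradicting $b_1=N-3$ for $N>3$. The source of the confusion is that $N$ is \emph{not} the number of branch points of $p:L^{\trop}\to N_{\bb{R}}$---there is only one---but the number of angular directions along which the two sheets of $\varphi^{\trop}$ coincide on a circle. Your ``codimension count'' never engages with this quantity, and your model has no parameter that could carry it.

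The paper's local model is instead the hyperelliptic curve $L_{f_d}=\{x^2=f_d(\xi)\}$ with $d=N-2$, viewed as a special Lagrangian after hyper-K\"ahler rotation. On its cylindrical end $L_{f_d}$ is governed by a potential whose leading term is $r^{d+2}\cos((d+2)\theta)$, so the two sheets cross along exactly $d+2=N$ rays---the same count as in the $N$-generic tropical data. This numerical coincidence is what makes the gluing possible: one matches the $N$ tropical crossing angles to the $N$ hyperelliptic ones by an angular diffeomorphism, interpolates the potentials on an annulus, and secures embeddedness there by shrinking the leading coefficient $a_d$. The hypothesis $N\ge 3$ enters simply as $d\ge 1$. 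The Betti number $b_1=N-3$ is then read off from the genus of the hyperelliptic curve (genus $g$ with one or two punctures according to parity), not from Riemann--Hurwitz on the tropical cover. Immersed double points, when present, come from double roots of $f_d$ and are a choice rather than a gluing obstruction; the degree-$1$ computation is carried out in the model $\{x=\pm\xi\cdot g(\xi)\}$ at such a root.
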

	
	\begin{remark}
		A similar realization problem was studied by Hicks in \cite{Hicks_realization}. The tropical object he considered is a tropical subvariety $T$ in a vector space $N_{\bb{R}}$. Contrary to our situation, which is in the setting of 
the SYZ fibration, the Lagrangian submanifold that realizes his tropical subvariety
$T$ does not cover the whole space $N_{\bb{R}}$.  In other words, the mirror sheaf is supported in
 an algebraic subvariety in $(\bb{C}^{\times})^n$, and therefore, cannot be locally free.
	\end{remark}
	
	The construction of $\bb{L}$ in Theorem \ref{thm:unobs_immersed_Lag-intro} is explicit in the sense that the topology of $\bb{L}$ is completely determined. Combining Theorem \ref{thm:unobs_immersed_Lag-intro} with equivariant HMS and Corollary 1.5, this explicit description of the Lagrangian multi-section provides us with the following corollary.
	
	\begin{corollary}[=Corollary \ref{cor:existence_of_bundle}]
		Suppose $\bb{L}^{\trop}$ is a $N$-generic 2-fold tropical Lagrangian multi-section over a complete fan $\Sigma$ on $N_{\bb{R}}\cong\bb{R}^2$ with $N\geq 3$. Then there is an indecomposable rank 2 toric vector bundles $\cu{E}$ on $X_{\Sigma}$ such that
		\begin{align*}
			\dim_{\bb{K}}\Ext_T^0(\cu{E},\cu{E})=&\,1,\\
			\dim_{\bb{K}}\Ext_T^1(\cu{E},\cu{E})=&\,N-3,\\
			\dim_{\bb{K}}\Ext_T^2(\cu{E},\cu{E})=&\,0,
		\end{align*}
		and $\bb{L}_{\cu{E}}^{\trop}=\bb{L}^{\trop}$.
	\end{corollary}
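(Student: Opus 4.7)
The plan is to combine Theorem~\ref{thm:unobs_immersed_Lag-intro} with the equivariant homological mirror symmetry equivalence $\mu_{\Sigma}\circ\kappa$ and Corollary~\ref{cor:MS_gives_bundle}. First, I would apply Theorem~\ref{thm:unobs_immersed_Lag-intro} to the $N$-generic 2-fold tropical Lagrangian multi-section $\bb{L}^{\trop}$ to produce a spin, graded, tautologically unobstructed Lagrangian multi-section $\bb{L}\in\cu{F}uk_{\bb{K}}^0(Y,\Lambda_{\Sigma})$ with asymptotic behaviour $\bb{L}^{\infty}\subset\Lambda_{\bb{L}^{\trop}}^{\infty}$. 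Equipping $\bb{L}$ with its canonical grading, Corollary~\ref{cor:MS_gives_bundle} then furnishes a toric vector bundle $\cu{E}$ on $X_{\Sigma}$ that is quasi-isomorphic in $\cu{P}erf_T(X_{\Sigma})$ to the image of $\bb{L}$ under the quasi-inverse of $\mu_{\Sigma}\circ\kappa$.

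Next, I would pin down the discrete invariants of $\cu{E}$. The rank of $\cu{E}$ equals the generic stalk dimension of $\kappa(\cu{E})$, which equals the degree of the branched cover $p_{N_{\bb{R}}}\circ i:\til{L}\to N_{\bb{R}}$; this degree is $2$ because $\bb{L}$ is a 2-fold multi-section, so $\cu{E}$ has rank $2$. The equality $\bb{L}_{\cu{E}}^{\trop}=\bb{L}^{\trop}$ then follows from the asymptotic containment $\bb{L}^{\infty}\subset\Lambda_{\bb{L}^{\trop}}^{\infty}$ together with the construction of the tropical Lagrangian multi-section $\bb{L}_{\cu{E}}^{\trop}$ associated to a toric vector bundle recalled in Section~\ref{sec:TGRC}, which reads off precisely this asymptotic data.

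For the $\Ext$-computation, I would invoke the $A_{\infty}$-equivalence provided by equivariant HMS to get $\Ext_T^{\bullet}(\cu{E},\cu{E})\cong HF^{\bullet}(\bb{L},\bb{L})$ as graded $\bb{K}$-vector spaces. In the embedded case, Theorem~\ref{thm:unobs} identifies $HF^{\bullet}(\bb{L},\bb{L})$ with $H_{dR}^{\bullet}(L;\bb{K})$, and the Betti numbers $(b_0,b_1,b_2)=(1,N-3,0)$ recorded in Theorem~\ref{thm:unobs_immersed_Lag-intro} produce the stated dimensions. Indecomposability is then immediate: since $\dim_{\bb{K}}\Ext_T^0(\cu{E},\cu{E})=1$, the $T$-equivariant endomorphism ring reduces to $\bb{K}\cdot\mathrm{id}$, which carries no non-trivial idempotents.

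The main obstacle I anticipate is the immersed case, in which the realization furnished by Theorem~\ref{thm:unobs_immersed_Lag-intro} carries self-intersection points and Theorem~\ref{thm:unobs} does not apply verbatim. To handle it, I would work directly with the immersed Floer complex, whose generators are Morse cochains on the normalization of $L$ together with one pair of immersed generators per double point. Using that the immersed sector is concentrated in degree $1$ and that $\bb{L}$ is tautologically unobstructed, I would verify that the total Poincar\'e polynomial of $HF^{\bullet}(\bb{L},\bb{L})$ still equals $1+(N-3)t$, so that both the Ext dimensions and the indecomposability conclusion survive unchanged.
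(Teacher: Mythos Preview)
Your approach is correct and matches the paper's: the corollary is obtained by applying equivariant HMS to the Lagrangian multi-section produced in Theorem~\ref{thm:unobs_immersed_Lag}, invoking Corollary~\ref{cor:MS_gives_bundle} to get a rank~2 toric vector bundle $\cu{E}$, reading off $\bb{L}_{\cu{E}}^{\trop}=\bb{L}^{\trop}$ from the asymptotics, and identifying $\Ext_T^{\bullet}(\cu{E},\cu{E})\cong HF^{\bullet}(\bb{L},\bb{L})\cong H_{dR}^{\bullet}(L;\bb{K})$ via Theorem~\ref{thm:unobs} and the Betti numbers recorded in Theorem~\ref{thm:unobs_immersed_Lag}.

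One small simplification: the ``main obstacle'' you anticipate---the immersed case---is not actually an obstacle. The construction underlying Theorem~\ref{thm:unobs_immersed_Lag} uses a hyperelliptic local model $x^2=f_d(\xi)$ where $f_d$ is allowed to have roots of multiplicity at most~2, and the immersed double points of $\bb{L}$ are in bijection with the multiplicity-2 roots. So one may simply choose $f_d$ with \emph{distinct} roots from the outset, producing an embedded $\bb{L}$ with the stated Betti numbers; there is no need to analyze the immersed Floer complex separately. (Your sketch for the immersed case is also essentially correct: each node trades two units of $b_1(\til{L})$ for two degree-1 immersed generators, and with no holomorphic disks the Floer differential vanishes, so the Poincar\'e polynomial is unchanged. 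But this detour is avoidable.)
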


The $N$-genericity condition can actually be generalized to any maximal $r$-fold tropical Lagrangian multi-section over a 2-dimensional complete fan (Definition \ref{def:N_generic_general_r}) and our construction for $r=2$ can be easily generalized and obtain the following partial result for $r\geq 3$.

\begin{theorem}[=Theorem \ref{thm:LRP_higher_rank}]
 A maximal $r$-fold tropical Lagrangian multi-section $\bb{L}^{\trop}$ over a complete 2-dimensional fan $\Sigma$ can be realized by an $r$-fold embedded (and hence unobstructed) Lagrangian multi-section if it is $\left\lfloor 2\left(\frac{d}{r}+1\right)\right\rfloor$-generic for some $d\in\bb{Z}_{>0}$ such that ${\rm{g.c.d.}}(r,d)=1$. In particular, such $\bb{L}^{\trop}$ can be realized by a rank $r$ toric vector bundle over $X_{\Sigma}$.
\end{theorem}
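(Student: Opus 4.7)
The plan is to adapt the explicit piecewise-Lagrangian construction from the proof of Theorem \ref{thm:unobs_immersed_Lag-intro} from the rank two setting to rank $r$. In the rank two case, the Lagrangian $\bb{L}$ is assembled from two ingredients: over each maximal cone $\sigma\in\Sigma(2)$ one writes down Lagrangian graphs whose asymptotic slopes are prescribed by the sheets of $\bb{L}^{\trop}$, and these local pieces are glued across codimension one cones and around the unique branch point using a standard local model for the double branched cover $z\mapsto z^2$. For general $r$, the same skeleton carries over provided the local model near the branch point is replaced by a $d$-twisted $r$-fold branched cover modeled on $z\mapsto z^r$, realized as an embedded Lagrangian in $T^*\bb{R}^2$.

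First I would formulate the generalized $N$-generic condition (Definition \ref{def:N_generic_general_r}) as a lower bound on the angular separation between the asymptotic rays of the $r$ sheets over each maximal cone. Using this, I would define, over each maximal cone, $r$ Lagrangian graphs given as differentials of local primitives read off from the slope data of $\bb{L}^{\trop}$, and interpolate in collar neighborhoods of the codimension one cones as in the rank two case. The inequality $N\geq \lfloor 2(d/r+1)\rfloor$ is chosen precisely so that the angular footprint of the $d$-twisted $r$-fold local branching model fits inside the angular room available between the sheets, leaving the remaining sheets sufficiently separated for a smooth gluing to exist.

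The role of the coprimality hypothesis $\gcd(r,d)=1$ is to ensure that the monodromy of the local $r$-fold branched model is a full $r$-cycle. Consequently, the $r$ asymptotic sheets get connected into a single honest $r$-fold branched cover $\til{L}\to N_{\bb{R}}$, rather than splitting into a disjoint union of multi-sections of lower rank. The asymptotic matching $\bb{L}^{\infty}\subset\Lambda_{\bb{L}^{\trop}}^{\infty}$, the grading, and the spin structure are all local conditions and can be verified as in the $r=2$ case.

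The main obstacle is showing that the constructed $\bb{L}$ is actually \emph{embedded}, which is a global condition on all $\binom{r}{2}$ pairs of sheets rather than a local one as in $r=2$. My plan is to reduce embeddedness to a monotonicity statement on local primitives: away from a neighborhood of the branch point, the $r$ sheets are Lagrangian graphs whose pairwise differences of primitives are strictly monotone along the relevant angular coordinate on $N_{\bb{R}}$, a monotonicity guaranteed precisely by the $\lfloor 2(d/r+1)\rfloor$-generic hypothesis; near the branch point, embeddedness is built into the $z\mapsto z^r$ model once $\gcd(r,d)=1$. Once $\bb{L}$ is embedded, Theorem \ref{thm:unobs} (i.e.\ Theorem 1.3) supplies tautological unobstructedness for free, and Corollary \ref{cor:MS_gives_bundle} together with equivariant HMS then upgrades $\bb{L}$ to a rank $r$ toric vector bundle on $X_{\Sigma}$ whose associated tropical Lagrangian multi-section is $\bb{L}^{\trop}$.
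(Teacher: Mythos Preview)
Your overall architecture---outer smoothing from the slopes of $\bb{L}^{\trop}$, inner local model, gluing across an annulus---matches the paper's, but two of the key mechanisms are misidentified.

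First, you do not name the local model, and the paper's entire argument hinges on a specific one: the algebraic curve
\[
L_{r,d}:=\{(x,\xi)\in\bb{C}^2: x^r=f_d(\xi)\},\qquad f_d\in\bb{R}[\xi],\ \deg f_d=d,\ \gcd(r,d)=1,
\]
which after hyper-K\"ahler rotation is an embedded special Lagrangian with a single cylindrical end (connectedness is where $\gcd(r,d)=1$ enters, as you say). On that end the primitive $\varphi_{r,d}$ has the explicit expansion
\[
\varphi_{r,d}(r,\theta)=\sum_{i\geq 0}c_i\,r^{\,d+r-ri}\cos\bigl((d+r-ri)\theta\bigr),
\]
so its leading term is $c_0\,r^{d+r}\cos((d+r)\theta)$.

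Second, and more seriously, your reading of the $N$-generic condition is off. It is not an inequality $N\geq\lfloor 2(d/r+1)\rfloor$ about ``angular room''; it is an \emph{equality} $N=\lfloor 2(d/r+1)\rfloor$ matching zero counts. For any pair of sheets $i\neq j$, the equation $\varphi_{r,d}(r,\theta+\tfrac{2\pi i}{r})=\varphi_{r,d}(r,\theta+\tfrac{2\pi j}{r})$ has, by the leading-term count, exactly $\lfloor 2(d/r+1)\rfloor$ solutions in $[0,\tfrac{2\pi}{r})$. The $N$-generic hypothesis on $\bb{L}^{\trop}$ says the outer potential $\varphi_{\geq R}$ has exactly $N$ such solutions. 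Only when these two numbers agree can one build the angle-matching diffeomorphism $\rho$ (as in the rank-2 proof) taking the outer zeros $\theta_i$ bijectively to the inner zeros $\theta_i^d(r)$, which is what makes the gluing go through. Your ``footprint fits inside the room'' picture would suggest any sufficiently small $d$ works, which is false.

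Finally, your embeddedness plan---monotonicity of pairwise primitive differences in the angular variable---is not what controls the gluing region. In the paper the embeddedness on the annulus $C_{[R+\varepsilon,R+1]}$ is obtained by showing the radial and angular derivative equations cannot hold simultaneously, using sign relations between the two potentials (analogues of Lemmas~\ref{lem:ineq_relation_varphi_R} and~\ref{lem:ineq_relation_varphi_std}) together with the freedom to shrink the leading coefficient $a_d$. Monotonicity in $\theta$ alone does not rule out coincidences of the full differentials.
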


	In Section \ref{sec:rk2}, we apply Theorem 1.7 to prove the following theorem.
	
	\begin{theorem}[=Theorem \ref{thm:general_case}]
		The mirror of a rank 2 indecomposable toric vector bundle $\cu{E}$ on $\bb{P}^2$ is quasi-isomorphic to an embedded, simply connected, and canonically graded Lagrangian multi-section $\bb{L}_{\cu{E}}\subset Y$ \emph{ such that} $\bb{L}_{\cu{E}}^{\infty}\subset\Lambda_{\bb{L}_{\cu{E}}^{\trop}}^{\infty}$.
	\end{theorem}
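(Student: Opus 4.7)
The plan is to combine Klyachko's classification of rank $2$ toric vector bundles on $\bb{P}^2$ with Theorem \ref{thm:unobs_immersed_Lag-intro}, Corollary \ref{cor:MS_gives_bundle}, and the equivariant HMS of Fang-Liu-Treumann-Zaslow.

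First, I would extract from the given rank $2$ indecomposable toric vector bundle $\cu{E}$ its associated tropical Lagrangian multi-section $\bb{L}_{\cu{E}}^{\trop}$ over the fan $\Sigma$ of $\bb{P}^2$. By Proposition \ref{prop:N>2}, since $\bb{L}_{\cu{E}}^{\trop}$ arises from a genuine rank $2$ toric bundle, it is automatically $N$-generic for some $N \geq 3$. The crucial technical step, and the main anticipated obstacle, is to show that indecomposability on $\bb{P}^2$ forces $N = 3$ exactly; here I would exploit the fact that $\bb{P}^2$'s fan has only three rays, so that the Klyachko filtration data of a rank $2$ indecomposable bundle is as tight as possible. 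Failure of $3$-genericity should manifest as a compatible splitting of the three filtrations, producing a direct sum decomposition into line bundles and contradicting indecomposability.

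Once $3$-genericity is established, applying Theorem \ref{thm:unobs_immersed_Lag-intro} with $N = 3$ produces a spin, canonically graded, embedded Lagrangian multi-section $\bb{L}_{\cu{E}} \subset Y$ with $\bb{L}_{\cu{E}}^{\infty} \subset \Lambda_{\bb{L}_{\cu{E}}^{\trop}}^{\infty}$ and Betti numbers $b_0 = 1$, $b_1 = b_2 = 0$. Since $\bb{L}_{\cu{E}}$ is a $2$-fold cover of the contractible base $N_{\bb{R}} \cong \bb{R}^2$ branched over a finite set, these Betti numbers force it to be topologically an open disk, and hence simply connected.

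Finally, to identify the mirror of $\bb{L}_{\cu{E}}$ with $\cu{E}$ itself, I would invoke Corollary \ref{cor:MS_gives_bundle}: the mirror of $\bb{L}_{\cu{E}}$ is quasi-isomorphic to a rank $2$ toric vector bundle $\cu{E}'$ whose tropical multi-section is $\bb{L}_{\cu{E}}^{\trop}$ by construction. Since both $\cu{E}$ and $\cu{E}'$ are rank $2$ indecomposable with the same tropical data, a rigidity argument using the explicit Klyachko classification on $\bb{P}^2$ (with only three rays to coordinate) should show $\cu{E}' \cong \cu{E}$. Invoking equivariant HMS then yields that the mirror of $\cu{E}$ is quasi-isomorphic to $\bb{L}_{\cu{E}}$, completing the proof.
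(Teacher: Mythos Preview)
Your proposal is correct and follows the same overall strategy as the paper: extract the tropical multi-section, establish $3$-genericity, apply Theorem~\ref{thm:unobs_immersed_Lag-intro} to build the embedded simply connected Lagrangian, and then use a rigidity argument to identify the mirror bundle with $\cu{E}$.

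The one substantive difference is the classification input. The paper invokes Kaneyama's classification, which puts every indecomposable rank $2$ toric bundle on $\bb{P}^2$ into the explicit normal form $E_{a,b,c}(D)$ or $E_{a,b,c}^*(D)$; from this form one reads off the six slopes $m(\sigma^{(i)})$ directly and checks $3$-genericity by inspection, and the rigidity lemma (that the tropical data determines $(a,b,c,D)$) is a short computation. Your route via Klyachko filtrations is more intrinsic and handles twists and duals uniformly, but it leaves you two small obligations that the Kaneyama form discharges for free: you must first argue that indecomposability forces $\bb{L}_{\cu{E}}^{\trop}$ to be \emph{maximal} (case~(O)) before Proposition~\ref{prop:N>2} applies, and you must supply the upper bound $N\leq 3$ from the combinatorics of the three rays (the difference $\varphi^{\trop}-\varphi^{\trop}\circ\gamma$ on $[0,\pi)$ has three pieces, each contributing at most one transverse zero). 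Both are straightforward, but they are exactly the places where the paper's explicit normal form shortens the argument.
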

	
	The proof of Theorem 1.11 relies on the classification result of Kaneyama \cite{Kaneyama_classification}, leading to equivariant rigidity of indecomposable rank 2 toric vector bundles.
	
	\begin{remark}
		The multi-section $\bb{L}_{\cu{E}}$ is exact as it is simply connected. Hence it defines an object in the exact Fukaya category $\cu{F}uk(Y,\Lambda_{\Sigma})$ over $\bb{C}$.
	\end{remark}

	

	\subsection*{Acknowledgment}
	The second author is grateful to Hanwool Bae, Hongtaek Jung, Dogancan Karabas, Man-Chun Lee, Sangjin Lee, Kaoru Ono for useful discussions and suggestions. We also thank Cheol-Hyun Cho, Hansol Hong, Siu-Cheong Lau, Yu-Shen Lin for their interest in this work. Both authors are
	 supported by the IBS project \# R003-D1.

	\section{Preliminaries}\label{sec:prelim}
	
	This section contains a brief review of equivariant coherent-constructible correspondence, microlocalization, and homological mirror symmetry for toric varieties.
	
	\subsection{Equivariant coherent-constructible correspondence}
	
	We denote by $\bb{K}$ the Novikov field over $\bb{C}$. Let $N\cong\bb{Z}^n,N_{\bb{R}}:=N\otimes_{\bb{Z}}\bb{R}$ and $M:=\Hom_{\bb{Z}}(N,\bb{Z}), M_{\bb{R}}:=M\otimes_{\bb{Z}}\bb{R}=N_{\bb{R}}^*$. Throughout the whole paper, $\Sigma$ is a complete fan on $N$ and $X_{\Sigma}$ the associated toric variety over $\bb{K}$. The \emph{equivariant coherent-constructible correspondence} states that we have a quasi-equivalent
	$$\kappa:\cu{P}erf_T(X_{\Sigma})\xrightarrow{\sim} Sh_{cc}(M_{\bb{R}},\Lambda_{\Sigma}),$$
	where $\cu{P}erf_T(X_{\Sigma})$ is the triangulated dg-category of perfect complexes of toric vector bundles on $X_{\Sigma}$ and $Sh_{cc}(M_{\bb{R}},\Lambda_{\Sigma})$ is defined as follows.
	
	We denote by $Sh_{cc}(M_{\bb{R}})$ the triangulated dg-category of complexes of sheaves over $k$ on $M_{\bb{R}}$ whose cohomology sheaves are compactly supported and constructible with respective to some Whitney stratification.

	\begin{definition}[$Sh_{cc}(M_{\bb{R}},\Lambda_{\Sigma})$]
		The category $Sh_{cc}(M_{\bb{R}},\Lambda_{\Sigma})$ is the full subcategory of $Sh_{cc}(M_{\bb{R}})$ whose objects have singular support contained in the conical subset
		\begin{equation}\label{eq:LambdaSigma}
			\Lambda_{\Sigma}:=\bigcup_{\tau\in\Sigma}(\tau^{\perp}+M)\times-\tau
			= \bigcup_{m \in M} \bigcup_{\tau\in\Sigma} (\tau^{\perp}+ m)\times-\tau
		\end{equation}
	\end{definition}
	
	We shall give a brief review on the microlocal stalk and singular support 
	extracted from  \cite{kashiwara-schapira:sheaves}. Let $\cu{F}$ be a complex of sheaves on $M_{\bb{R}}$, its microlocal stalk $\mu_{x,-\xi}(\cu{F})$ at $(x,-\xi)\in T^*M_{\bb{R}}=M_{\bb{R}}\times N_{\bb{R}}$ is defined as follows. By viewing $-\xi\in N_{\bb{R}}$ as a linear function on $M_{\bb{R}}$,  
	we  associate to each $t\in\bb{R}$ and an open subset $U\subset M_{\bb{R}}$ 
	the subcomplex $\Gamma_{-\xi\leq t}(U,\cu{F})$ of $\Gamma(U,\cu{F})$ consisting of sections supported on the closed subset $\{x\in U:-\xi(x)\leq t\}$. 
	
	This assignment $U \mapsto \Gamma_{-\xi\leq t}(U,\cu{F})$ defines a functor from the sheaf $\cu{F}$ to
	the category of abelian groups, which is left exact and so we obtain its right derived functor ${\rm{R}}\Gamma_{-\xi\leq t}(U,-)$. This gives a complex of vector spaces
	$$\mu_{x,-\xi}(\cu{F}):=\lim_{\substack{\longrightarrow\\U\ni x}}{\rm{R}}\Gamma_{-\xi\leq-\xi(x)}(U,\cu{F}),$$
	which is known as the \emph{microlocal stalk of $\cu{F}$ at $(x,-\xi)$}. The \emph{singular support} is defined as
$$
SS(\cu{F}):=\ol{\{(x,-\xi)\in T^*M_{\bb{R}}:\mu_{x,-\xi}(\cu{F})\neq 0\}}.
$$

	When $\cu{F}$ is constructible, $SS(\cu{F})$ is a (singular) Lagrangian subset of the symplectic manifold $(T^*M_{\bb{R}},\omega_{std})$ and when $\cu{F}\in Sh_{cc}(M_{\bb{R}},\Lambda_{\Sigma})$, the stalks $\mu_{x,-\xi}(\cu{F})$ actually glued to a sheaf $\mu_x(\cu{F})$ on $T_x^*M_{\bb{R}}$, which is constructible with respect to $-\Sigma$. In particular, the microlocal stalk is independent of $\xi\in\Int(\sigma)$ and so we simply denote it by $\mu_{x,-\sigma}(\cu{F})$.
	
	Treumann further refines the equivariant CCC by establishing the following theorem.
	
	\begin{theorem}[=Theorem 1.9 in \cite{Morse_theory_TVB}]
		Let $\cu{E}^{\bullet}\in\cu{P}erf_T(X_{\Sigma})$. Then the following statements are equivalent:
		\begin{enumerate}
			\item The complex $\cu{E}^{\bullet}$ is quasi-isomorphic to a 1-term complex concentrated in degree 0.
			\item For each $\sigma\in\Sigma(n)$ and $m\in M$, the microlocal stalk $\mu_{m,-\sigma}(\kappa(\cu{E}^{\bullet}))$ is concentrated at degree 0.
		\end{enumerate}
	\end{theorem}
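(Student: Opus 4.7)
The plan is to establish both implications by passing through the equivalence $\kappa$ and arguing entirely on the constructible-sheaf side in $Sh_{cc}(M_{\bb{R}},\Lambda_{\Sigma})$.

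For (1) $\Rightarrow$ (2), I would first reduce to the case where $\cu{E}^{\bullet}$ is literally a single toric vector bundle $\cu{E}$ sitting in degree $0$, so that $\kappa(\cu{E})$ is an honest constructible sheaf (not merely a complex) concentrated in degree $0$. Using the Klyachko-type description of $\cu{E}$ through a compatible family of decreasing filtrations $\{E^{\bullet}(\rho)\}_{\rho\in\Sigma(1)}$ on the generic fiber, one can write down an explicit model for $\kappa(\cu{E})$: over each dual cone region it is a finite direct sum of constant sheaves on ``character polytopes'' with multiplicities controlled by the filtration jumps. Plugging this description into the definition of $\mu_{m,-\sigma}$ for $\sigma\in\Sigma(n)$ maximal, the half-space cutoff $\{-\xi\le -\xi(m)\}$ intersects these polytopes in a way that returns the $m$-th graded piece of the iterated multi-filtration at the fixed point associated to $\sigma$. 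This graded piece is a plain vector space, hence concentrated in degree $0$.

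For (2) $\Rightarrow$ (1), which is the substantive direction, the strategy is to exploit the fact that the collection of microlocal stalks $\{\mu_{m,-\sigma}(\kappa(\cu{E}^{\bullet}))\}$ with $m\in M$, $\sigma\in\Sigma(n)$ forms a complete set of Morse-theoretic ``generators'' for cohomology in $Sh_{cc}(M_{\bb{R}},\Lambda_{\Sigma})$. Concretely, I would choose a proper convex exhaustion function $\varphi\colon M_{\bb{R}}\to\bb{R}$ whose outgoing gradient directions at infinity trace out the rays $-\Sigma(n)$, and filter $\kappa(\cu{E}^{\bullet})$ by sublevel sets of $\varphi$. Because $SS(\kappa(\cu{E}^{\bullet}))\subset\Lambda_{\Sigma}$ and the sheaf is compactly supported, the non-characteristic deformation lemma of Kashiwara--Schapira implies that the filtration is finite and that the successive quotients are supported at the critical events, which here are exactly the pairs $(m,-\sigma)\in M\times \Sigma(n)$. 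The associated spectral sequence has $E_{1}$-page given by a direct sum of the microlocal stalks $\mu_{m,-\sigma}(\kappa(\cu{E}^{\bullet}))$; by hypothesis this $E_{1}$-page is concentrated in degree $0$, and since the spectral sequence abuts to the hypercohomology of $\kappa(\cu{E}^{\bullet})$, the cohomology sheaves $H^{j}(\kappa(\cu{E}^{\bullet}))$ must vanish for $j\neq 0$. As $\kappa$ is a quasi-equivalence, this forces $\cu{E}^{\bullet}$ to be quasi-isomorphic to a single sheaf in degree $0$, which, being perfect and $T$-equivariant, is a toric vector bundle.

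The main obstacle will be the construction and control of the Morse-theoretic filtration in the second step, in particular the identification of the $E_{1}$-page with the prescribed microlocal stalks. A clean implementation requires ordering the lattice points $m\in M$ by $\varphi$-value and iterating a distinguished triangle that isolates, at each step, the contributions of the finitely many cones $\sigma\in\Sigma(n)$ along which the wall of $\Lambda_{\Sigma}$ is being crossed, while using compact support to truncate the process. An alternative route that avoids Morse theory is a direct cohomological induction on the dimension of the support, peeling off orbit closures and using the hypothesis to control each successive mapping cone; this replaces the global spectral sequence argument by a sequence of local ones but still hinges on the same computation of microlocal stalks carried out in the first direction.
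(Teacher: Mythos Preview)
The paper does not prove this theorem at all: it is quoted as Treumann's result (Theorem 1.9 in \cite{Morse_theory_TVB}) and used as a black box, so there is no in-paper proof to compare your proposal against. Your outline is consonant with what the title of Treumann's paper suggests (a Morse-theoretic argument on $M_{\bb{R}}$), but I cannot confirm it matches Treumann's actual proof from the present paper alone.

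That said, there is a genuine slip in your $(2)\Rightarrow(1)$ argument. You filter $\kappa(\cu{E}^{\bullet})$ by sublevel sets of a convex exhaustion and then say the resulting spectral sequence ``abuts to the hypercohomology of $\kappa(\cu{E}^{\bullet})$'', from which you conclude that the cohomology \emph{sheaves} $H^{j}(\kappa(\cu{E}^{\bullet}))$ vanish for $j\neq 0$. Hypercohomology concentrated in degree $0$ does not by itself force the cohomology sheaves to be concentrated in degree $0$. What you actually want is the stronger statement that the filtration is a filtration \emph{in the derived category} whose graded pieces are (finite direct sums of) skyscraper objects placed at the lattice points $m$, each quasi-isomorphic to $\mu_{m,-\sigma}(\kappa(\cu{E}^{\bullet}))$; then the long exact sequence of \emph{cohomology sheaves} attached to each distinguished triangle, together with the hypothesis, propagates degree-$0$ concentration through the tower. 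You gesture at this in your ``obstacle'' paragraph with the phrase ``iterating a distinguished triangle'', and that is the correct mechanism; the spectral-sequence-to-hypercohomology phrasing should be replaced by it.

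A second point to tighten: your last sentence asserts that a perfect $T$-equivariant complex concentrated in degree $0$ is automatically a toric vector bundle. Concentration in degree $0$ only gives a coherent sheaf; local freeness needs a separate argument (e.g.\ via the identification of $\mu_{m,-\sigma}$ with the $m$-graded piece of the fiber at the torus fixed point $X_{\sigma}$, which forces the fiber ranks to be constant, or via a Tor-amplitude argument on the affine charts).
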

	
	\subsection{Fukaya categories and microlocalization}\label{sec:Fuk_microloca}

	Let $N,N_{\bb{R}},M,M_{\bb{R}}$ as before. We use $(\xi_j)$ for an affine coordinates of $N_{\bb{R}}$ and $(x_j)$ for those of $M_{\bb{R}}$. Equip $Y:=T^*M_{\bb{R}}=N_{\bb{R}}\times M_{\bb{R}}$ the Liouville 1-form
	$$\lambda:=\sum_{j=1}^n\xi_jdx_j$$
	and the symplectic structure $\omega:=d\lambda$. This gives the volume form
	$$\omega^n:=d\xi_1\wedge dx_1\wedge\cdots\wedge d\xi_n\wedge dx_n.$$
	Equip $Y$ with the complex structure $z_j:=\xi_j+\sqrt{-1}x_j$ and the standard flat metric
	$$
	g = \sum_j |d\xi_j|^2 + |dx_j|^2 = \text{\rm Re} \sum_j d\overline z_j dz_j.
	$$
	Let $\Omega$ be the holomorphic volume form $dz_1\wedge\cdots\wedge dz_n$. Then, we have the compatibility equation
	$$
	\frac{\omega^n}{n!}=(-1)^{\frac{n(n-1)}{2}}\left(\frac{\sqrt{-1}}{2}\right)^n\Omega\wedge\ol{\Omega}.
	$$
	In particular $(T^*M_\bb{R},\omega,\Omega)$ defines a Calabi-Yau manifold structure.
	Denote by $p_{N_{\bb{R}}}:Y\to N_{\bb{R}},\, p_{M_{\bb{R}}}:Y\to M_{\bb{R}}$ the two natural projections. For a point $\xi\in N_{\bb{R}}$, we use the notation $F_{\xi}$ for the fiber $p_{N_{\bb{R}}}^{-1}(\xi)$ of $p_{N_{\bb{R}}}:Y\to N_{\bb{R}}$.
	
	\begin{definition}\label{defn:lag-immersion}
		A Lagrangian immersion of $Y$ is a pair $\bb{L}:=(\til{L},i)$, where $\til{L}$ is a smooth manifold of dimension $n$ and $i:\til{L}\to Y$ is an immersion such that $i^*\omega=0$. We denote the image $i(\til{L})\subset Y$ by $L$. We further assume $L$ has only double point singularities.
	\end{definition}
	
	Recall that a \emph{grading} on an oriented Lagrangian immersion $i:\til{L}\to Y$ is defined to be a choice of smooth function $\theta_{\bb{L}}:\til{L}\to\bb{R}$ so that
	$$\iota^*\Omega=e^{\sqrt{-1}\theta_{\bb{L}}}{\vol}_{\til{L}},$$
	where ${\vol}_{\til{L}}$ is a volume form on $\til{L}$ with $|{\vol}_{\til{L}}|=1$ (with respective to the induced metric).
	
	We introduce a canonical grading on fibers of $p_{N_{\bb{R}}}:Y\to N_{\bb{R}}$ and the zero section $0_{N_{\bb{R}}}$.
	
	\begin{definition}[Canonical grading]\label{def:canonical_orientation}
		For each $\xi\in N_{\bb{R}}$, the \emph{canonical grading} of $F_{\xi}$ is given by $\theta_{F_{\xi}}:=\frac{n\pi}{2}$
		which is induced by the choice of the volume form
		$$
		{\vol}_{\xi}:= dx_1\wedge\cdots\wedge dx_n
		$$
		on the fiber $F_{\xi}\subset Y$. For the zero section $0_{N_{\bb{R}}}$, the \emph{canonical grading} is defined to be $0$
		which is induced by the choice of the volume form
		$$
		{\vol}_{N_{\bb{R}}}:=d\xi_1\wedge\cdots\wedge d\xi_n.
		$$
	\end{definition}

	Let $\bb{L}_1,\bb{L}_2$ be two graded Lagrangian immersions so that their images $L_1,L_2$ intersect transversally away from their self-intersection points. The \emph{degree} of an intersection point $p\in L_1\cap L_2$ is defined as follows. As $Y\cong\bb{C}^n$ as Calabi-Yau manifolds, there is a $U(n)$ matrix $\psi_p:\bb{C}^n\to \bb{C}^n$ so that
	\begin{align*}
		\psi_p(T_pL_1)=&\,\{(x_1,\dots,x_n)\in\bb{C}^n:x_i\in\bb{R}\},\\
		\psi_p(T_pL_2)=&\,\{(e^{\sqrt{-1}\theta_1}x_1,\dots,e^{\sqrt{-1}\theta_n}x_n)\in\bb{C}^n:x_i\in\bb{R}\},
	\end{align*}
	for some $\theta_i\in(0,\pi)$. We call these angles the \emph{angles of intersection at $p$}. The degree of $p$ from $\bb{L}_1$ to $\bb{L}_2$ is defined to be
	$$
	\deg_{\bb{L}_1,\bb{L}_2}(p):=\frac{1}{\pi}(\theta_1+\cdots+\theta_n+\theta_{\bb{L}_1}(p)-\theta_{\bb{L}_2}(p))\in\bb{Z}.
	$$
	We have $\deg_{\bb{L}_1,\bb{L}_2}(p)+\deg_{\bb{L}_2,\bb{L}_1}(p)=n$.
	
	The Fukaya category of $Y$ we consider here are those considered in \cite{CCC_HMS}. We recall their definitions. Let $(X,g)$ be a Riemannian manifold. Consider the unit disk bundle
	$$D^*X:=\{(x,\xi)\in T^*X:\|\xi\|_g\leq 1\}\subset T^*X\}$$
	and the unit sphere bundle
	$$S^*X:=\{(x,\xi)\in T^*X:\|\xi\|_g=1\}\subset D^*X.$$
	There is an embedding $\iota:T^*X\to D^*X$, defined by
	$$\iota:(x,\xi)\mapsto\left(x,\frac{\xi}{\sqrt{1+\|\xi\|_g^2}}\right).$$
	Given any subset $L\subset T^*X$, we define
	$$L^{\infty}:=\ol{\iota(L)}\cap D^*X.$$
	
	\begin{definition}\label{def:fuk}
		Fix an analytic-geometric category $\cu{C}$. Let $X$ be a real analytic manifold and equip $T^*X$ with the standard Liouville structure $\lambda$. A \emph{Lagrangian brane} of $T^*X$ is a pair $(\bb{L},E)$, where $\bb{L}$ is a graded $\lambda$-exact Lagrangian immersion with at most finitely many double point singularities and spin structure on the domain $\til{L}$ so that
		\begin{enumerate}
			\item $p_X(L)\subset X$ is compact,
			\item $\ol{\iota(L)}$ is a $\cu{C}$-set (see Appendix A.1. of \cite{CCC_HMS}),
			\item $L$ admits a tamed perturbation data as in \cite{NZ},
		\end{enumerate}
		and $E$ is a $U_{\bb{K}}$-local system on $L$, where $U_{\bb{K}}:=\ker({\rm{val}}:\bb{K}^{\times}\to\bb{R})$.
	\end{definition}
	
	Let $(\bb{L}_1,E_1),(\bb{L}_2,E_2)$ be immersed Lagrangian branes of $Y$. To define their morphism space, Nadler-Zaslow introduced the notion of a fringed set.
	
	\begin{definition}
		An open subset $R_2\subset\bb{R}_{>0}^2$ is called fringed if $\pi_2:R_2\to\bb{R}$ has image being an interval $(0,\delta)$ and whenever $(\delta_1,\delta_2)\in R$, we have $(\delta_1',\delta_2)\in R$ for all $\delta_1'\in(0,\delta_1)$.
	\end{definition}
	
	They also proved that for given controlled Hamiltonian functions\footnote{A controlled Hamiltonian function is a function $H:Y\to\bb{R}$ so that outside a vertically compact set $K$, containing the zero section, we have $H(x,\xi)=|\xi|$ with respective the the flat metric.} $H_1,H_2$, there exists a fringed set $R_2$ so that for any $(\delta_1,\delta_2)\in R_2$, there exists $r>0$ so that
	$$
	\ol{\varphi_{H_1,\delta_1}(L_1)}\cap \ol{\varphi_{H_2,\delta_2}(L_2)}\subset Y_{\leq r}: = Y \cap \{\|\xi\|_g \leq r\},
	$$
	The morphism space from $\bb{L}_1$ to $\bb{L}_2$ is then defined to be
	$$CF(\bb{L}_1,\bb{L}_2):=\bigoplus_{p\in\varphi_{H_1,\delta_1}(L_1)\cap\varphi_{H_2,\delta_2}(L_2)}\Hom(E_{1,p},E_{2,p}),$$
	which is $\bb{Z}$-graded. The Floer differential and the $A_{\infty}$-infinity maps are defined as usual.
	
	\begin{definition}
		The triangulated envelope of the $A_{\infty}$-category generated by embedded exact Lagrangian branes of $T^*X$ is denoted by $\cu{F}uk(T^*X)$. Let $\Lambda\subset T^*X$ be a conical Lagrangian subset. A Lagrangian brane $(L,E)$ is said to be \emph{$\Lambda$-admissible} if it further satisfies the asymptotic condition $L^{\infty}\subset\Lambda^{\infty}$. The $A_{\infty}$-category generated by $\Lambda$-admissible embedded Lagrangian branes is denoted by $\cu{F}uk(T^*X,\Lambda)$, which is a full subcategory of $\cu{F}uk(T^*X)$.
	\end{definition}
	
	Now, we apply Definition \ref{def:fuk} to $X=M_{\bb{R}}$ and the conical Lagrangian
	$$
	\Lambda_{\Sigma}:=\bigcup_{\tau\in\Sigma}(\tau^{\perp}+M)\times-\tau
	$$
	This gives two Fukaya categories $\cu{F}uk(Y),\cu{F}uk(Y,\Lambda_{\Sigma})$.
	
	In \cite{NZ, Nadler}, Nadler-Zaslow proved that there is a quasi-equivalence
	$$\mu:Sh_{cc}(M_{\bb{R}})\xrightarrow{\simeq} \cu{F}uk(Y)$$
	which restricts to a quasi-equivalence
\begin{equation}\label{eq:muSigma}
	\mu_{\Sigma}:Sh_{cc}(M_{\bb{R}},\Lambda_{\Sigma})\xrightarrow{\simeq} \cu{F}uk(Y,\Lambda_{\Sigma}).
\end{equation}
	Their construction goes as follows. First, they identify $Sh_{cc}(M_{\bb{R}})$ as the triangulated envelope of the Morse category $Mor(M_{\bb{R}})$ (so they share the same derived category), whose objects are given by the pair $(U,f)$ where $U\subset M_{\bb{R}}$ is a bounded open subset and $f$ is defined as follows: 
	We take a nonnegative real-analytic function  $m:\ol{U}\to\bb{R}_{\geq 0}$ such that
	 $m(x)=0$ if and only if $x \in \partial U$. Then we write $f : = \log m$ which is a 
	function defined on $U$ that blows up along the boundary $\partial \overline U$.
	
	The morphism space is given by the relative de Rham complex
	$$
	\Hom_{Sh_{cc}(M_{\bb{R}})}((U_0,f_0),(U_1,f_1))
	:=(\Omega_{dR}(\ol{U}_0\cap U_1,\partial U_0\cap U_1),d).
	$$
	The functor $Mor(M_{\bb{R}})\to Sh_{cc}(M_{\bb{R}})$ is given by
	$$(U,f)\mapsto i_{U*}\ul{\bb{K}}_U,$$
	where $i_U:U\to M_{\bb{R}}$ is the inclusion and $\ul{\bb{K}}_U$ denotes the $\bb{K}$-valued constant sheaf on $U$. 
We can define $Mor(M_{\bb{R}})\to \cu{F}uk(Y)$ by mapping $(U,f)$ to the Lagrangian
	$$
	L_{U*}:=\Gamma_{d\log(f)}:=\{(x,d\log(f(x)))\in Y:x\in U\}.
	$$
	(See \cite{kasturirangan}, \cite{kasturirangan-oh} for an earlier appearance of such  correspondence in the Floer theory.) This Lagrangian carries a natural brane structure and is called a \emph{standard brane}. Then $\mu$ is defined by taking $i_{U*}\ul{\bb{K}}_U$ to $L_{U*}$. Taking Verdier dual, we have the costandard object $i_{U!}\ul{\bb{K}}_U[n]$, which gets mapped to the \emph{costandard brane}
	$$L_{U!}:=\Gamma_{-d\log(f)}.$$
	This defines $\mu:Sh_{cc}(M_{\bb{R}})\to \cu{F}uk(Y)$ and its restriction \eqref{eq:muSigma}.
	
	\section{Microlocalization functor over the Novikov field}
	\label{sec:microlocalization-over-Novikov}
	
	For the purpose of this paper, exact Lagrangian submanifolds are not enough. We also need to consider some non-exact Lagrangian immersions. Therefore, we need to extend our coefficient field to the Novikov field. Moreover, as our Lagrangians are non-compact, we need to make sure holomorphic disks won't escape to infinity to ensure compactness of their moduli spaces. (We omit discussion on this $C^0$-estimate and refer readers to \cite{NZ} for the details. See Remark \ref{rem:choiceJ} for some comments related to the choice of $J$ in this regard.)
	
	We now follow \cite{AJ} to introduce the moduli space of holomorphic disks bounded by $L$. Let $R$ denote the set of points $(p^+,p^-)\in L\times_YL$ for which $p^+\neq p^-$. Let $I\subset\{0,1,\dots,k\}$ and $\alpha:I\to R$. Denote by $\Sigma$ a genus 0 pre-stable bordered Riemann surface and $z=(z_0,z_1,\dots,z_k)$ are distinct smooth points on $\partial\Sigma$. There is a continuous map $l:S^1\to\partial\Sigma$ so that $l^{-1}(z)$ consists of two points if $z\in\partial\Sigma$ is a singular point, otherwise, $l^{-1}(z)=\{\zeta\}$ has only one point. Let $\ol{\cu{M}}_{k+1}(Y,\bb{L};\alpha,\beta)$ be the Kuranishi space, which consists of the data $(\Sigma,z,u,l,\ol{u})$ modulo obvious automorphisms, satisfying the following properties:
	\begin{itemize}
		\item [(a)] $u:\Sigma\to Y$ is a holomorphic map so that $u(\partial\Sigma)\subset L$ and $u_*([\Sigma])=\beta\in\pi_2(Y,L)$.
		\item [(b)] $\zeta_0,\zeta_1,\dots,\zeta_k\in S^1$ are ordered counter-clockwise.
		\item [(c)] $\ol{u}:S^1\backslash\{\zeta_i\}_{i\in I}\to L$ is a continuous map so that $u\circ l=i\circ\ol{u}$.
		\item [(d)] For any $i\in I$, we have
		$$\alpha(i)=\left(\lim_{\theta\to 0^-}\ol{u}(e^{\sqrt{-1}\theta}\zeta_i),\lim_{\theta\to 0^+}\ol{u}(e^{\sqrt{-1}\theta}\zeta_i)\right)\in R.$$
	\end{itemize}
	Define $\ev_i:\ol{\cu{M}}_{k+1}(Y,\bb{L};\alpha,\beta)\to L\sqcup R$ to be the $i$-th evaluation map, given by
	$$
	\ev_i([\Sigma,z,u,l,\ol{u}]):=\begin{cases}
	\ol{u}(\zeta_i)\in L & \text{ if } i\notin I,\\
	\alpha(i)\in R & \text{ if }i\in I.
	\end{cases}
	$$
	Let $\sigma:R\to R$ be the obvious involution. We further define $\ev:\ol{\cu{M}}_{k+1}(Y,\bb{L};\alpha,\beta)\to L\sqcup R$ to be
	$$
	\ev([\Sigma,z,u,l,\ol{u}]):=\begin{cases}
	\ol{u}(\zeta_0)\in L & \text{ if } 0\notin I,\\
	\sigma(\alpha(0))\in R & \text{ if }0\in I.
	\end{cases}
	$$
	We put
	$$\ol{\cu{M}}_{k+1}(Y,\bb{L};\beta):=\bigsqcup_{\substack{I\subset\{0,1,\dots,k\}\\\alpha:I\to R}}\ol{\cu{M}}_{k+1}(Y,\bb{L};\alpha,\beta),$$
	which is a union of Kuranishi spaces with possibly different virtual dimensions. Denote by
	$$
	{\bf{ev}},{\bf{ev}}_i:\ol{\cu{M}}_{k+1}(Y,\bb{L};\beta)\to L\sqcup R
	$$
	the obvious evaluation maps. For later use, we also let $\ol{\cu{M}}_{k+1,l}(Y,\bb{L};\beta)$ denote the moduli space of holomorphic disks bounded $L$ with $k+1$ boundary marked points and $l$ interior marked points. The tameness condition (3) in Definition \ref{def:fuk} ensures compactness of $\ol{\cu{M}}_{k+1,l}(Y,\bb{L};\beta)$.
	
	To define the $A_{\infty}$-maps, we use the de Rham model
	$$
	CF^{\bullet}(\bb{L};\bb{K}):=H_{dR}^{\bullet}(\til{L};\bb{K})\oplus\bb{K}\inner{R}\cong H^{\bullet}(\til{L}\times_Y\til{L}).
	$$
	Each element $(p^-,p^+)\in R$ can be absolutely graded by the Fredholm index of certain Cauchy-Riemann operator (See \cite{AJ}, Section 4.3). We define $m_k:CF^{\bullet}(\bb{L};\bb{K})^{\otimes k}\to CF^{\bullet}(\bb{L};\bb{K})$ by
	$$
	m_k(\alpha_1,\dots,\alpha_k):=\sum_{\beta\in\pi_2(Y,L)}{\bf{ev}}_!({\bf{ev}}_1^*(\alpha_1)\cup\cdots\cup{\bf{ev}}_k^*(\alpha_k)) T^{\omega(\beta)}.
	$$
	Then it is a standard argument that $(CF^{\bullet}(\bb{L};\bb{K}),\{m_k\}_{k\geq 0})$ defines an $A_{\infty}$-algebra.

	Recall a bounding cochain is a degree 1 element  $b\in CF^1(L,\bb{K}^+)$ such that
	$$\sum_{k=0}^{\infty}m_k(b^{\otimes k})=0.$$
	Given a bounding cochain $b$, one can deform the $A_{\infty}$-algebra $m_k$ by
	$$m_k^b(\alpha_1,\dots,\alpha_k):=\sum_{l_1,\dots,l_k\geq 0}m_{k+l_1+\cdots+l_k}(b^{\otimes l_1},\alpha_1,b^{\otimes l_2},\alpha_2,\dots,b^{\otimes l_k},\alpha_k).$$
	This gives a new $A_{\infty}$-algebra with $(m_1^b)^2=0$. Hence the Floer cohomology for $(\bb{L},b)$ is well-defined.
	
	\begin{definition}
		An immersed Lagrangian brane $\bb{L}$ is said to be \emph{unobstructed} if it admits a bounding cochain. We said $\bb{L}$ is \emph{tautologically unobstructed} if $0$ is a bounding cochain.
	\end{definition}
	
	For a pair of unobstructed immersed Lagrangian branes $(\bb{L}_1,b_1),(\bb{L}_2,b_2)$, one can also define their Floer cohomology by setting
	$$\mf{n}^{b_1,b_2}(p):=\sum_{l_1,l_2\geq 0}m_{l_1+1+l_2}^{\bb{L}_1,\bb{L}_2}(b_1^{\otimes l_1},p,b_2^{\otimes l_2}),$$
	where $m_{l_1+1+l_2}^{\bb{L}_1,\bb{L}_2}$ is defined by counting holomorphic disks bounded by $L_1,L_2$ with $l_1+l_2+2$ marked points so that the first $l_1$ marked points land on $b_1$, the $(l_1+1)$-th marked point mapped to the input $p\in i_1(L_2)\cap L_2$, the $(l_1+2),\dots,(l_1+l_2+1)$-th marked points go to $b_2$ and the last one goes to an output in $i_1(L_2)\cap L_2$. It follows from the $A_{\infty}$-relation that $(m_1^{b_1,b_2})^2=0$.
	
	In this paper, we are interested in tautologically unobstructed Lagrangian immersions.
	\begin{definition}\label{defn:FukK}
		Denote by $\cu{F}uk_{\bb{K}}^0(Y)$ the $A_{\infty}$-category generated by tautologically unobstructed Lagrangian immersions equipped with $\bb{K}$-valued local system and by $\cu{F}uk_{\bb{K}}^0(Y,\Lambda_{\Sigma})$ the full subcategory generated by $\Lambda_{\Sigma}$-admissible ones.
	\end{definition}
	The derived category of compactly support constructible sheaves on $M_{\bb{R}}$ over $\bb{K}$ is denoted by $Sh_{cc,\bb{K}}(M_{\bb{R}})$ and by $Sh_{cc,\bb{K}}(M_{\bb{R}},\Lambda_{\Sigma})$ the full subcategory generated by those sheaves with microlocal support in $\Lambda_{\Sigma}$.
	There is a microlocalization functor
	$$\mu:Sh_{cc,\bb{K}}(M_{\bb{R}})\to \cu{F}uk_{\bb{K}}^0(Y)$$
	given by mapping $i_{U*}\ul{\bb{K}}_{U}$ to $L_{U*}$ equipped with the trivial $U_{\bb{K}}$-local system: Thanks to the
	sequence of isomorphisms
	\begin{align*}
		\Hom_{Sh_{cc,\bb{K}}(M_{\bb{R}})}(i_{U_1*}\ul{\bb{K}}_{U_1},i_{U_2*}\ul{\bb{K}}_{U_2})\simeq & \,(\Omega_{dR}^{\bullet}(\ol{U}_1\cap U_2,\partial U_1\cap U_2;\bb{K}),d)\\
		\simeq & \,(\Omega_{dR}^{\bullet}(\ol{U}_1\cap U_2,\partial U_1\cap U_2;\bb{C})\otimes_{\bb{C}}\bb{K},d)\\
		\simeq & \,\Hom_{\cu{F}uk(Y)}(L_{U_1*},L_{U_2*})\otimes_{\bb{C}}\bb{K}\\
		\simeq & \,\Hom_{\cu{F}uk_{\bb{K}}^0(Y)}(L_{U_1*},L_{U_2*}),
	\end{align*}
	the functor $\mu$ is well-defined and fully faithful.
	
	\begin{proposition}\label{prop:surjective}
		The microlocalization functor $\mu:Sh_{cc,\bb{K}}(M_{\bb{R}})\to \cu{F}uk_{\bb{K}}^0(Y)$ is a quasi-equivalence. Hence $\cu{F}uk_{\bb{K}}^0(Y)$ is quasi-equivalent to the exact Fukaya category $\cu{F}uk_{\bb{K}}(Y)$ over $\bb{K}$.
	\end{proposition}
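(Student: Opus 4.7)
Fully faithfulness of $\mu$ is established by the chain of isomorphisms displayed just before the proposition, so only essential surjectivity remains. The second quasi-equivalence in the statement then follows for free: applying the Nadler--Zaslow theorem over $\bb{C}$ and extending scalars along $\bb{C}\hookrightarrow\bb{K}$ yields $Sh_{cc,\bb{K}}(M_{\bb{R}})\simeq\cu{F}uk_{\bb{K}}(Y)$, and composing with $\mu$ gives $\cu{F}uk_{\bb{K}}^0(Y)\simeq\cu{F}uk_{\bb{K}}(Y)$. Thus the whole content of the proposition is to show that every tautologically unobstructed immersed brane is quasi-isomorphic, in $\cu{F}uk_{\bb{K}}^0(Y)$, to an object coming from $Sh_{cc,\bb{K}}(M_{\bb{R}})$.

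For essential surjectivity, fix $\bb{L}\in\cu{F}uk_{\bb{K}}^0(Y)$ and build a candidate sheaf by probing $\bb{L}$ with costandard branes. Concretely, the assignment $U\mapsto CF^{\bullet}(L_{U!},\bb{L})$ on bounded opens $U\subset M_{\bb{R}}$ with standardizing data $(U,f)$ defines a presheaf $\cu{F}_{\bb{L}}$, with restriction maps induced by the continuation morphisms between costandard branes associated to open inclusions $U'\subset U$. Tautological unobstructedness of $\bb{L}$ (i.e.\ vanishing of the obstruction $m_0$) guarantees that each $CF^{\bullet}(L_{U!},\bb{L})$ is an honest cochain complex over $\bb{K}$, and the $A_{\infty}$-functoriality of Floer cohomology yields the required compatibility. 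The resulting sheafification lies in $Sh_{cc,\bb{K}}(M_{\bb{R}})$: constructibility follows from Hamiltonian invariance of Floer cohomology combined with the fact that $L_{U!}$ depends (up to Hamiltonian isotopy) only on the Whitney stratum of $\partial U$ relative to $L$, while compact support follows because the tame asymptotic behavior of $L$ at infinity forces $HF^{\bullet}(L_{U!},\bb{L})=0$ whenever $U$ avoids the horizontal projection of $L$.

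Finally, deduce $\mu(\cu{F}_{\bb{L}})\simeq\bb{L}$ by Yoneda together with the standard-brane generation theorem of Nadler--Zaslow. By construction, for each standard brane $L_{V*}$ the natural comparison map $\mu(\cu{F}_{\bb{L}})\to\bb{L}$ induces an isomorphism on $\Hom(L_{V*},-)$, and since standard branes generate $\cu{F}uk_{\bb{K}}^0(Y)$, the comparison map is a quasi-isomorphism. The main obstacle is precisely this generation statement in the enlarged category $\cu{F}uk_{\bb{K}}^0(Y)$: one must lift the Nadler--Zaslow generation argument, originally formulated over $\bb{C}$ for embedded exact Lagrangians, to allow tautologically unobstructed immersed test objects with $\bb{K}$-local systems. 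Field-independence of that argument handles the passage to $\bb{K}$, while the tameness hypothesis in Definition \ref{def:fuk}(3) together with $m_0(\bb{L})=0$ keeps the relevant holomorphic-disk moduli spaces compact and free of curvature bubbles, so the original standard-brane-resolution argument transplants essentially verbatim and $\bb{L}$ is quasi-isomorphic to a twisted complex of standard branes, hence to $\mu(\cu{F}_{\bb{L}})$.
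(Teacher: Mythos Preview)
Your strategy is essentially the paper's: reduce to Nadler's resolution-by-standard-branes argument and check it survives the passage to tautologically unobstructed immersed branes over $\bb{K}$. The intermediate construction of $\cu{F}_{\bb{L}}$ by probing with costandards is a detour the paper skips—once you know $\bb{L}$ is a twisted complex of standard branes, essential surjectivity is immediate without ever naming the sheaf—but it is not wrong.

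Where your write-up is too optimistic is the phrase ``transplants essentially verbatim.'' The paper isolates a genuine technical point you do not mention: Nadler's argument goes through the diagonal $L_{\Delta_X*}\subset T^*(X\times X)$ and a K\"unneth formula
\[
\Hom(\bb{L}_1\times\bb{P}_1,\bb{L}_2\times\bb{P}_2)\simeq\Hom(\bb{L}_1,\bb{P}_1)\otimes\Hom(\bb{L}_2,\bb{P}_2).
\]
When $\bb{L}$ is a genuinely immersed (non-embedded) Lagrangian, the product $\bb{L}\times\bb{P}$ no longer has transverse self-intersection—only \emph{clean} self-intersection of positive dimension—so the product falls outside the class of objects set up in Definition~\ref{defn:lag-immersion}. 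The paper handles this by invoking Fukaya's immersed Floer theory with clean self-intersection (and defers full details to a companion paper). Your appeal to tameness and $m_0=0$ controls compactness and curvature for $\bb{L}$ itself, but does not address why the product-side Floer theory needed for the diagonal argument is well-defined. This is the one substantive point your proposal should name rather than absorb into ``verbatim.''
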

	\begin{proof}
		By going through the work of Nadler \cite{Nadler}, we find that for the tautological unobstructed category, the microlocalization functor is still essentially surjective.
		In \cite{Nadler}, Nadler turned the $\cu{F}uk(Y)$-module $\Hom_{\cu{F}uk(Y)}(P,L)$ into a twisted complex of standard modules
		$$
		\Hom_{\cu{F}uk(Y)}(\alpha_{M_{\bb{R}}}(L),L_{\{x_a\}})\otimes \Hom_{\cu{F}uk(Y)}(P,L_{\tau_a*}),
		$$
		where $\cu{T}:=\{\tau_a\}$ gives a fine enough stratification of $M_{\bb{R}}$ so that $L^{\infty}\subset\Lambda_{\cu{T}}^{\infty}$ and $\alpha_{M_{\bb{R}}}:T^*M_{\bb{R}}\to T^*M_{\bb{R}}$ is the involution $(x,\xi)\mapsto(x,-\xi)$.
		(Nadler's proof relies on the result that the diagonal of $T^*M_\bR^- \times T^*M_\bR$ can be
		resolved by the standard branes via the argument of utilizing the Lagrangian correspondence functor, which holds the case
		independent of the coefficient field.)
		The only difference here is that the (finite-dimensional) vector spaces $\Hom_{\cu{F}uk(Y)}(\alpha_{M_{\bb{R}}}(L),L_{\{x_a\}}),\Hom_{\cu{F}uk(Y)}(P,L_{\tau_a*})$ and morphisms are now defined over $\bb{K}$ when $L,P$ are tautologically unobstructed. For readers' convenience, we provide 
		some more explanation in  Appendix \ref{appendix:nadler}
		on how Nadler's proof given in \cite[Section 4]{Nadler} applies to
		our case of tautologically unobstructed immersed Lagrangians.
	\end{proof}
	
	As CCC \cite{CCC_HMS}
	is true over any field, we obtain the equivariant homological mirror symmetry over the Novikov field.
	
	\begin{corollary}\label{cor:eqivariant_HMS}
		Let $\Sigma$ be a complete fan on $N_{\bb{R}}$ and $X_{\Sigma}$ be its associated toric variety over $\bb{K}$. There is a quasi-equivalence $\cu{P}erf_T(X_{\Sigma})\simeq \cu{F}uk_{\bb{K}}^0(Y,\Lambda_{\Sigma})$.
	\end{corollary}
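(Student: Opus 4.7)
The plan is to obtain the quasi-equivalence by composing two quasi-equivalences: the equivariant coherent-constructible correspondence $\kappa$ on the coherent side with the microlocalization functor $\mu_\Sigma$ on the constructible side, each established over the Novikov field $\bb{K}$. Neither step requires new input beyond what has been set up in the preceding sections, so the corollary will follow essentially formally once one checks that each functor restricts correctly to the $\Lambda_\Sigma$-admissible subcategories.

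First I would invoke the equivariant CCC of Fang-Liu-Treumann-Zaslow. Their construction of $\kappa:\cu{P}erf_T(X_{\Sigma})\xrightarrow{\sim}Sh_{cc}(M_{\bb{R}},\Lambda_{\Sigma})$ is purely algebraic: it is built from equivariant line bundle generators $\mathcal{O}(D_{\sigma,m})$ matched to the constructible sheaves $i_{\sigma+m,*}\ul{\bb{K}}_{\sigma+m}$, and the morphism matchings are Čech-theoretic. Consequently the construction goes through verbatim with $\bb{K}$ in place of $\bb{C}$, yielding a quasi-equivalence
\[
\kappa_{\bb{K}}:\cu{P}erf_T(X_{\Sigma,\bb{K}})\xrightarrow{\sim}Sh_{cc,\bb{K}}(M_{\bb{R}},\Lambda_{\Sigma}).
\]

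Second, I would combine this with Proposition \ref{prop:surjective}. That proposition already produces the quasi-equivalence $\mu:Sh_{cc,\bb{K}}(M_{\bb{R}})\xrightarrow{\sim}\cu{F}uk_{\bb{K}}^0(Y)$ on the full unrestricted categories. The point that needs verification is that $\mu$ sends the subcategory cut out by the singular-support condition $SS(\cu{F})\subset\Lambda_\Sigma$ into, and essentially onto, the subcategory cut out by the asymptotic condition $L^\infty\subset\Lambda_\Sigma^\infty$. On the standard-brane side this is immediate: for $(U,f)$ with $SS(i_{U*}\ul{\bb{K}}_U)\subset\Lambda_\Sigma$, the standard Lagrangian $L_{U*}=\Gamma_{d\log f}$ has boundary at infinity contained in $\Lambda_\Sigma^\infty$ by the usual Nadler-Zaslow argument. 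Essential surjectivity onto $\cu{F}uk_{\bb{K}}^0(Y,\Lambda_\Sigma)$ follows from Nadler's resolution by standard branes adapted to a stratification refining $\Sigma$: for any $\Lambda_\Sigma$-admissible tautologically unobstructed brane $L$ one can take $\cu{T}$ to be the stratification associated to $\Sigma$, in which case the twisted complex expressing $L$ in terms of the $L_{\tau_a *}$ lies in the image of $\mu$ restricted to $Sh_{cc,\bb{K}}(M_{\bb{R}},\Lambda_{\Sigma})$. Composing $\mu_{\Sigma,\bb{K}}$ with $\kappa_{\bb{K}}$ then gives the asserted quasi-equivalence.

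The only real subtlety — and thus the main obstacle — is the compatibility of the asymptotic/singular-support restrictions under $\mu$ in the tautologically unobstructed, Novikov-coefficient setting. This amounts to observing that the Nadler-Zaslow resolution used in Proposition \ref{prop:surjective} is functorial in the stratification, so that refining $\cu{T}$ to one adapted to $\Sigma$ yields a resolution living inside the admissible subcategory. Since this step is purely a matter of bookkeeping on singular supports and does not involve any new holomorphic-disk count (the unobstructedness and Novikov-coefficient aspects were already handled in Proposition \ref{prop:surjective}), I expect it to be short; the substance of the corollary is really the content of Proposition \ref{prop:surjective} together with the field-independence of CCC.
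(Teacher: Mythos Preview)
Your proposal is correct and follows essentially the same approach as the paper: compose the field-independent CCC with the microlocalization quasi-equivalence of Proposition~\ref{prop:surjective}, then restrict to the $\Lambda_\Sigma$-admissible subcategories. The paper's own argument is a single sentence (``As CCC is true over any field, we obtain the equivariant homological mirror symmetry over the Novikov field''), so your version simply spells out in more detail the compatibility of the singular-support and asymptotic conditions under $\mu$, which the paper leaves implicit.
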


	Let $(\bb{L},\cu{L})\in \cu{F}uk_{\bb{K}}^0(Y,\Lambda_{\Sigma})$ and $\cu{F}_{(\bb{L},\cu{L})}\in Sh_{cc,\bb{K}}(M_{\bb{R}},\Lambda_{\Sigma})$ be the corresponding constructible sheaf over $\bb{K}$. We want to compute the microlocal stalk of $\cu{F}_{(\bb{L},\cu{L})}$.
	Recall that $\Lambda_{\Sigma}^{\infty}$ is a (singular) Legendrian of $S^*M_{\bb{R}}$.
	We decompose it into
	$$
	\Lambda_{\Sigma}^{\infty}:=\Lambda_{sm}^{\infty}\sqcup\Lambda_{sing}^{\infty},
	$$
	the smooth and singular part. Note that for each $m\in M$ and $\sigma\in\Sigma(n)$, the subset $\{m\}\times\Int(-\sigma)^{\infty}$ is a connected component of $\Lambda_{sm}^{\infty}$.
	For each point $(m,-\xi^{\infty})\in\{m\}\times\Int(-\sigma)^{\infty}$, there is a small Legendrian linking sphere $S_{(m,-\xi^{\infty})}\subset S^*M_{\bb{R}}$ around the point $(m,-\xi^{\infty})$.
	
	Propagating $S_{(m,-\xi^{\infty})}$  into $D^*M_\bR$
	along a path in $\Int(\sigma)$ so that the radius of $S_{(m,-\xi^{\infty})}$ shrink to 0, we obtain a Lagrangian disk which we denote by $D_{(m,-\xi^{\infty})}$. It was argued in \cite[Section 3.3]{GPS_sectorial_descent} that the quasi-isomorphism class of the Lagrangian disk $D_{(m,-\xi^{\infty})}$ only depends on the component $\{m\}\times\Int(-\sigma)^{\infty}$ as long as $D_{(m,-\xi^{\infty})}$ doesn't hit the zero section $0_{M_{\bb{R}}}$ of $p_{M_{\bb{R}}}:Y\to M_{\bb{R}}$. Hence we can simply write $D_{m,-\sigma}$ for $D_{(m,-\xi^{\infty})}$. In particular, for any $\xi\in\Int(\sigma)$, we can move $D_{m,-\sigma}$ by a Hamiltonian flow that is parallel to the Liouville flow so that $D_{m,-\sigma}\cap F_{-\xi}$ is an open ball in $F_{-\xi}$. We denote such a choice of representative by $D_{m,-\xi}$. As $D_{m,-\xi}$ is contractible, it must be exact and graded. We equip $D_{m,-\xi}$ the unique grading $\theta_{m,-\xi}$ so that $\theta_{m,-\xi}=\frac{n\pi}{2}$ on $D_{m,-\xi}\cap F_{-\xi}$.
	
	\begin{definition}
		For $m\in M$, $\sigma\in\Sigma(n)$, the Lagrangian disk $D_{m,-\sigma}$ is called a \emph{linking disk}.
	\end{definition}
	
	For each $m\in M$, we choose a small neighbourhood $U_m\subset M_{\bb{R}}$. For any $\xi\in N_{\bb{R}}$, we put
	$$U_{-\xi>-\xi(m)}:=U_m\cap\{x\in M_{\bb{R}}:-\xi(x)>-\xi(m)\}.$$
	Then we obtain two costandard objects $L_{U_m!},L_{U_{-\xi>-\xi(m)}!}\in \cu{F}uk(Y)$. Their morphism space is given by
	\begin{align*}
		\Hom_{\cu{F}uk(Y)}(L_{U_{-\xi>-\xi(m)}!},L_{U_m!})\simeq &\,  \Hom_{Sh_{cc}(M_{\bb{R}})}(i_{U_{-\xi>-\xi(m)}!}\ul{\bb{K}}_{U_{-\xi>-\xi(m)}},i_{U!}\ul{\bb{K}}_{U_m})\\
		\simeq & \,(\Omega_{dR}(U_{-\xi>-\xi(m)})\otimes_{\bb{C}}\bb{K},d).
	\end{align*}
	Hence their Floer cohomology is generated by a degree 0 element $p_0$.
	It follows from a fundamental result in \cite[Theorem 1.9]{GPS_sectorial_descent} that one has the exact triangle\footnote{According to \cite{GPS_sectorial_descent}, the first two terms in (\ref{eqn:exact_triangle}) seems need to be reversed. This difference is caused by the fact that the authors in \cite{GPS_sectorial_descent} actually perturb $L_1$ more than $L_2$ to define the (wrapped) Floer cohomology while in here or \cite{NZ, Nadler}, $L_2$ is perturbed more. This explains the reversal of the first two terms in (\ref{eqn:exact_triangle}). Also note that in \cite{GPS_microlocal}, Section 4.4, there is a shift of degree when defining the microlocal stalk.}
	\begin{equation}\label{eqn:exact_triangle}
		L_{U_{-\xi>-\xi(m)}!}\xrightarrow{p_0} L_{U_m!}\to D_{m,-\xi},
	\end{equation}
	The following theorem should follow from \cite[Theorem 1.1]{GPS_microlocal} by restricting to the infinitesimal wrapped subcategory. But for self-containedness, we give a direct proof for it without using the wrapped result.
	
	\begin{theorem}\label{thm:microlocalstalk}
		Let $\bb{L}\in \cu{F}uk_{\bb{K}}^0(Y,\Lambda_{\Sigma})$ and $\cu{F}_{\bb{L}}\in Sh_{cc,\bb{K}}(M_{\bb{R}},\Lambda_{\Sigma})$ be the corresponding constructible sheaf over $\bb{K}$. Then
		$$\mu_{m,-\sigma}(\cu{F}_{\bb{L}})\simeq \Hom_{\cu{F}uk_{\bb{K}}^0(Y)}(D_{m,-\sigma}[-n],\bb{L}),$$
		for all $m\in M$ and $\sigma\in\Sigma(n)$.
	\end{theorem}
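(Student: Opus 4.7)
The plan is to apply $\Hom_{\cu{F}uk_\bb{K}^0(Y)}(-,\bb{L})$ to the exact triangle (\ref{eqn:exact_triangle}), translate the resulting triangle back through the quasi-equivalence $\mu: Sh_{cc,\bb{K}}(M_{\bb{R}})\xrightarrow{\sim} \cu{F}uk_\bb{K}^0(Y)$ of Proposition \ref{prop:surjective}, and match it termwise against the distinguished triangle that defines the microlocal stalk $\mu_{m,-\xi}(\cu{F}_\bb{L})$.

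First I would unpack the identifications on the sheaf side. Under $\mu$, the costandard brane $L_{U!}$ corresponds to the costandard sheaf $i_{U!}\ul{\bb{K}}_U[n]$, and for an open inclusion $i_U$ the adjunction $i_{U!}\dashv i_U^*$ yields
$$\Hom_{\cu{F}uk_\bb{K}^0(Y)}(L_{U!},\bb{L})\;\simeq\;\Hom_{Sh_{cc,\bb{K}}(M_{\bb{R}})}(i_{U!}\ul{\bb{K}}_U[n],\cu{F}_\bb{L})\;\simeq\;{\rm R}\Gamma(U,\cu{F}_\bb{L})[-n].$$
Applying $\Hom_{\cu{F}uk_\bb{K}^0(Y)}(-,\bb{L})$ to (\ref{eqn:exact_triangle}) and shifting by $[n]$ then produces a distinguished triangle
$$\Hom_{\cu{F}uk_\bb{K}^0(Y)}(D_{m,-\xi}[-n],\bb{L})\;\longrightarrow\;{\rm R}\Gamma(U_m,\cu{F}_\bb{L})\;\longrightarrow\;{\rm R}\Gamma(U_{-\xi>-\xi(m)},\cu{F}_\bb{L}),$$
in which the second arrow is, by naturality of $\mu$ applied to $p_0$, the canonical restriction map.

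Next I would compare this with the defining triangle of the microlocal stalk. By the basic construction of ${\rm R}\Gamma_{-\xi\leq -\xi(m)}$ in \cite{kashiwara-schapira:sheaves} one has, for each neighborhood $U\ni m$,
$${\rm R}\Gamma_{-\xi\leq-\xi(m)}(U,\cu{F}_\bb{L})\;\longrightarrow\;{\rm R}\Gamma(U,\cu{F}_\bb{L})\;\longrightarrow\;{\rm R}\Gamma(U_{-\xi>-\xi(m)},\cu{F}_\bb{L}),$$
and $\mu_{m,-\xi}(\cu{F}_\bb{L})$ is defined by passing to the colimit of the leftmost term as $U\searrow\{m\}$. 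Since the quasi-isomorphism class of the linking disk $D_{m,-\xi}$ does not depend on the choice of small representative (as recalled in the excerpt via \cite{GPS_sectorial_descent}), the Fukaya-side triangle above stabilizes as $U_m$ shrinks. A five-lemma-style comparison of the two triangles then identifies $\Hom_{\cu{F}uk_\bb{K}^0(Y)}(D_{m,-\xi}[-n],\bb{L})$ with $\mu_{m,-\xi}(\cu{F}_\bb{L})$, and independence from $\xi\in\Int(\sigma)$ reduces the subscript to $-\sigma$.

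The main obstacle is the compatibility step: to verify that the map ${\rm R}\Gamma(U_m,\cu{F}_\bb{L})\to {\rm R}\Gamma(U_{-\xi>-\xi(m)},\cu{F}_\bb{L})$ coming from $p_0$ under $\mu$ is indeed the canonical restriction of sections. This amounts to identifying, up to sign and shift, the morphism $p_0\in HF^0(L_{U_{-\xi>-\xi(m)}!},L_{U_m!})$ with the generator of ${\rm R}\Gamma(U_{-\xi>-\xi(m)},\ul{\bb{K}})[-n]$ representing the inclusion $U_{-\xi>-\xi(m)}\hookrightarrow U_m$. This can be done by inspecting the de Rham model of morphisms between standard/costandard branes recalled in Section \ref{sec:Fuk_microloca}, where both sides are computed by the same relative de Rham complex, so that $p_0$ corresponds to the class of the constant function $1$. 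Once this matching is in place, the remaining arguments are formal.
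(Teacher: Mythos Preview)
Your proposal is correct and follows essentially the same route as the paper: apply $\Hom(-,\bb{L})$ to the exact triangle (\ref{eqn:exact_triangle}), identify the two costandard terms with ${\rm R}\Gamma(U_m,\cu{F}_{\bb{L}})$ and ${\rm R}\Gamma(U_{-\xi>-\xi(m)},\cu{F}_{\bb{L}})$ via the microlocalization equivalence, match the connecting map with the sheaf restriction, and conclude by a Five Lemma comparison with the triangle defining the microlocal stalk. The only cosmetic differences are that the paper works at the level of long exact sequences in (hyper)cohomology rather than distinguished triangles, and it cites \cite[Lemma 3.26]{ganatra2020covariantly} for the identification of $m_2(-,p_0)$ with the restriction map where you invoke the de Rham model directly.
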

	\begin{proof}
		By definition of the sheaf $\cu{F}_{\bb{L}}$, we have
		$$
		\Hom_{Sh_{cc,\bb{K}}(M_{\bb{R}})}(i_{U!}\ul{\bb{K}}_U,\cu{F}_{\bb{L}})\simeq \Hom_{\cu{F}uk_{\bb{K}}^0(Y)}(L_{U!}[-n],\bb{L}).
		$$
		In particular, for $\xi\in\Int(\sigma)$, we have
		$$
		\Hom_{Sh_{cc,\bb{K}}(M_{\bb{R}})}(i_{U_{-\xi>-\xi(m)}!}\ul{\bb{K}}_{U_{-\xi>-\xi(m)}},\cu{F}_{\bb{L}})\simeq \Hom_{\cu{F}uk_{\bb{K}}^0(Y)}(L_{U_{-\xi>-\xi(m)}!}[-n],\bb{L})
		$$
		and the restriction map
		$$
		\Hom_{Sh_{cc,\bb{K}}(M_{\bb{R}})}(i_{U!}\ul{\bb{K}}_{U_m},\cu{F}_{\bb{L}})\to \Hom_{Sh_{cc,\bb{K}}(M_{\bb{R}})}(i_{U_{-\xi>-\xi(m)}!}\ul{\bb{K}}_{U_m},\cu{F}_{\bb{L}})
		$$
		can be identified with the multiplication map
		$$
		m_2(-,p_0):\Hom_{\cu{F}uk_{\bb{K}}^0(Y)}(L_{U_m!}[-n],\bb{L})\to \Hom_{\cu{F}uk_{\bb{K}}^0(Y)}(L_{U_{-\xi>-\xi(m)}!}[-n],\bb{L}),
		$$
		(See \cite[Lemma 3.26]{ganatra2020covariantly} for this identification.) Let $\cu{F}_{-\xi\leq-\xi(m)}$ be the kernel of the restriction map $\cu{F}_{\bb{L}}\to\cu{F}_{\bb{L}}|_{U_{-\xi>-\xi(m)}}$. Then we obtain a short exact sequence of compelxes of sheaves
		$$0\to\cu{F}_{-\xi\leq-\xi(m)}\to\cu{F}_{\bb{L}}\to\cu{F}_{\bb{L}}|_{U_{-\xi>-\xi(m)}}\to 0,$$
		which induces the long exact sequence in hypercohomologies
		$$\bb{H}^{\bullet}(U_m,\cu{F}_{-\xi\leq-\xi(m)})\to\bb{H}^{\bullet}(U_m,\cu{F}_{\bb{L}})\to\bb{H}^{\bullet}(U_m,\cu{F}_{\bb{L}}|_{U_{-\xi>-\xi(m)}})\xrightarrow{[1]}.$$
		The cohomology of the microlocal stalk $\mu_{m,-\xi}(\cu{F}_{\bb{L}})$ is by definition
		$$
		H^{\bullet}(\mu_{m,-\xi}(\cu{F}_{\bb{L}})):=\lim_{\substack{\longrightarrow\\U\ni m}}\bb{H}^{\bullet}(U,\cu{F}_{-\xi\leq-\xi(m)})
		$$
		and we also have the following commutative diagram
		\begin{equation*}
			\xymatrix{{} \ar[r] & \bb{H}^{\bullet}(U,\cu{F}_{-\xi\leq-\xi(m)})  \ar[r] \ar[d]
				& {\bb{H}^{\bullet}(U_m,\cu{F}_{\bb{L}})} \ar[d]^{\cong} \ar[r]^(0.4){Res}
				& {\bb{H}^{\bullet}(U_m,\cu{F}_{\bb{L}}|_{U_{-\xi>-\xi(m)}})} \ar[d]^{\cong} \ar[r] &{}
				\\{}  \ar[r] & HF^{\bullet}(D_{m,-\sigma}[-n],\bL) \ar [r]
				& HF^{\bullet}(L_{U_m!}[-n],\bb{L}) \ar[r]^{m_2(-,p_0)\quad\,}
				& HF^{\bullet}(L_{U_{-\xi>-\xi(m)}!}[-n],\bb{L}) \ar[r]&  {}
			}
		\end{equation*}
		The exact triangle (\ref{eqn:exact_triangle}) induces the long exact sequence of Floer cohomologies
		$$
		HF^{\bullet}(D_{m,-\sigma}[-n],\bb{L})\to HF^{\bullet}(L_{U_m!}[-n],\bb{L})\to HF^{\bullet}(L_{U_{-\xi>-\xi(m)}!}[-n],\bb{L})\xrightarrow{[1]},
		$$
		which leads us to the conclusion by the Five Lemma that
		$$H^{\bullet}(\mu_{m,-\sigma}(\cu{F}_{\bb{L}}))\cong HF^{\bullet}(D_{m,-\sigma}[-n],\bb{L})$$
		as desired.
	\end{proof}
	
	\section{Lagrangian multi-sections and their equivariant mirror}\label{sec:LMS}
	
	We now introduce the notion of Lagrangian multi-sections for the projection map $p_{N_{\bb{R}}}:Y\to N_{\bb{R}}$. First, we recall the notion of branched covering maps between smooth manifolds of the same dimension.
	
	\begin{definition}\label{def:branced_covering}
		Let $M_1,M_2$ be smooth manifolds with the same dimension. A smooth map $f:M_1\to M_2$ is called a \emph{$r$-fold branched covering map} if there exists a codimension 2 subset $B_f\subset M_2$ such that the restriction $f|_{M_1\backslash f^{-1}(B_f)}:M_1\backslash f^{-1}(B_f)\to M_2\backslash B_f$ is an $r$-fold covering map. The set $B_f$ is called the \emph{branch locus} of $f$ and the set of all points at which $f$ fails to be a local diffeomorphism is called the \emph{ramification locus} of $f$. We also put $S_f:=f^{-1}(B_f)$.
	\end{definition}
	
	\begin{remark}
		It is known that the ramification locus of a branched covering map is a union of locally closed submanifolds of codimension at least 2.
	\end{remark}
	
	\begin{definition}\label{def:LMS}
		An immersed Lagrangian $\bb{L}:=(i:\til{L}\to Y)$ is called a \emph{Lagrangian multi-section of degree $r$} if the composition $p_{\bb{L}}:=p_{N_{\bb{R}}}\circ i:\til{L}\to N_{\bb{R}}$ is a branched $r$-fold covering map. We simply write $B_{\bb{L}}$ for $B_{p_{\bb{L}}}$ and $S_{\bb{L}}$ for $S_{p_{\bb{L}}}$. We also put
		$$I_{\bb{L}}:=\{p\in L:p\text{ is an immersed point}\},$$
		which is assumed to be finite. The \emph{canonical orientation} of $\bb{L}$ is the orientation on $\bb{L}$ so that $p_{\bb{L}}:\til{L}\to N_{\bb{R}}$ is orientation preserving.
	\end{definition}
	
	While single-valued Lagrangian sections are exact and so (tautologically) unobstructed in our
	circumstance, Lagrangian multi-sections are not necessarily exact when the branch locus is non-empty.
	
	\subsection{Tautological unobstructedness of Lagrangian multi-sections in dimension 2}
	
	In this subsection,  we examine tautological unobstructedness of general
	Lagrangian multi-sections of dimension 2. The following is a result in this direction directly relevant to
	our main purpose.
	
	\begin{theorem}\label{thm:unobs}
		Any embedded 2-dimensional Lagrangian multi-sections of $Y$ are tautologically unobstructed. In particular, we have $HF^{\bullet}(\bb{L},\bb{L})=H_{dR}^{\bullet}(L;\bb{K})$.
	\end{theorem}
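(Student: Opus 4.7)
The plan is to rule out every non-constant $J_0$-holomorphic disk $u:(D,\partial D)\to(Y,L)$ by showing that $\lambda|_L$ is exact for any embedded $2$-dimensional Lagrangian multi-section $L\subset Y$. Once this is established, Stokes yields
\[
\int_D u^*\omega=\int_{\partial D}u^*\lambda=\int_{\partial D}u^*(dG)=0,
\]
while the pointwise identity $u^*\omega=(|u_1'|^2+|u_2'|^2)\,d\mathrm{Re}\,z\wedge d\mathrm{Im}\,z\ge 0$ for $J_0$-holomorphic $u$ forces $u$ to be constant. Hence $m_0=0$, $0$ is a bounding cochain, $\bb{L}$ is tautologically unobstructed, and the computation $HF^\bullet(\bb{L},\bb{L})=H^\bullet_{dR}(L;\bb{K})$ follows in the standard way since the $A_\infty$-algebra collapses to its classical de Rham part in the absence of holomorphic disks.

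To establish exactness of $\lambda|_L$ I would proceed locally then globally. On the regular locus $L\setminus S_{\bb{L}}$, each sheet is the graph of $\nabla F$ for some local generating function $F$ on $N_\bR$, and a direct calculation shows $\lambda|_{\text{sheet}}=dG^{(i)}$ with $G^{(i)}=\sum_j \xi_j\partial_{\xi_j}F-F$ the Legendre dual of $F$. Near a ramification point $\tilde b\in S_{\bb{L}}$---crucially isolated because for $n=2$ the branch locus $B_{\bb{L}}\subset N_\bR$ is zero-dimensional---the smoothness and embeddedness of $L$ supply a disk neighborhood on which $\lambda|_L$ is closed and hence exact by Poincar\'e's lemma, yielding a smooth local primitive $G_{\tilde b}$. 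A short Taylor-expansion argument using a smooth normal form $p_{\bb{L}}(w)=w^r$ at $\tilde b$ together with the Lagrangian constraint on the fiber coordinate $\chi=x_1+ix_2$ (which forces $\mathrm{Im}(\bar w\,\partial_w\chi)=0$, making the relevant Taylor coefficients of $\chi$ real by direct contour integration over $\{|w|=\epsilon\}$) verifies that $G_{\tilde b}$ is genuinely smooth and single-valued across the ramification.

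The final step is the global gluing. Fix a basepoint $p_0\in L$ and set $G(p):=\int_{p_0}^p\lambda|_L$; well-definedness reduces to vanishing of $\int_\sigma\lambda|_L$ for every loop $\sigma\subset L$. Using the contractibility of $N_\bR$ and the discreteness of $B_{\bb{L}}$ for $n=2$, any such $\sigma$ can be homotoped into a standard form that decomposes into arcs lying on individual sheets, joined together inside small neighborhoods of ramification points. On each sheet-arc the integration of $\lambda|_L=dG^{(i)}$ produces a boundary contribution $G^{(i)}(\text{end})-G^{(i)}(\text{start})$; the normalization $G^{(i)}:=G_{\tilde b}+\text{const}$ near each $\tilde b$ makes the contributions from adjacent arcs telescope across each transition, and summing around $\sigma$ yields zero.

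The step I expect to be the main obstacle is the global consistency of these normalizations across distant ramification points: locally, the smooth primitive $G_{\tilde b}$ pins down the additive constant of every sheet converging at $\tilde b$, and one must verify that these local pinnings are mutually compatible over the whole cover. This is precisely where the dimension hypothesis $n=2$ enters, since the zero-dimensionality of $B_{\bb{L}}$ together with the embeddedness of $L$ makes the relevant local model rigid enough for the compatibility to hold; with this in hand the exactness of $\lambda|_L$ and hence the tautological unobstructedness of $\bb{L}$ follow.
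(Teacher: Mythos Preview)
Your approach has a genuine gap: the central claim that every embedded $2$-dimensional Lagrangian multi-section is exact is false. The paper states this explicitly just after Definition~\ref{def:LMS} (``Lagrangian multi-sections are not necessarily exact when the branch locus is non-empty''), and it is the very reason Novikov coefficients are introduced.

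A concrete counterexample is the paper's own local model $L_{f_3}=\{x^2=f_3(\xi)\}$ with $f_3\in\bb{R}[\xi]$ having three distinct real roots $r_1<r_2<r_3$; this is an embedded $2$-fold Lagrangian multi-section of genus one. With the paper's conventions $x=x_1-\sqrt{-1}x_2$, $\xi=\xi_1+\sqrt{-1}\xi_2$ one computes $\lambda|_L=\mathrm{Re}(\xi\,dx)=d\bigl(\mathrm{Re}(\xi x)\bigr)-\mathrm{Re}(x\,d\xi)$, so the period of $\lambda|_L$ over the $A$-cycle encircling $[r_1,r_2]$ equals $-\mathrm{Re}\oint_A\sqrt{f_3}\,d\xi=-2\int_{r_1}^{r_2}\sqrt{f_3(\xi)}\,d\xi\neq 0$. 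Hence $\lambda|_L$ is not exact, and the same holds for the glued multi-sections $\bb{L}_{\varphi_{\bb{L}^{\trop}}}$ with $N\geq 5$ (i.e.\ $b_1(L)\geq 2$), since the $A$-cycle lives in the region where the gluing leaves $L_{f_d}$ untouched.

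The failure in your argument is exactly where you flagged it: the local primitives $G_{\tilde b}$ at distinct ramification points impose \emph{rigid} constraints on the additive constants of the sheet primitives, and these constraints are generically incompatible once $L$ has genus. Your telescoping argument implicitly assumes you may adjust these constants independently along the loop, but on the arc joining $\tilde b_1$ to $\tilde b_2$ the constant is already fixed by the geometry, and the mismatch is precisely the nonzero period computed above. The paper's proof avoids this entirely: it uses the local exactness on $L\setminus S_{\bb{L}}$ only to force any nonconstant disk to meet $p_{N_{\bb{R}}}^{-1}(B_{\bb{L}})$ (Lemma~\ref{lem:intersection}), and then rules out such disks for generic $J$ by a virtual dimension count ($\ol{\cu{M}}_1(L,J;\beta)\times_L S_{\bb{L}}$ has dimension $n-4<0$ for boundary hits, and $\ol{\cu{M}}_{0,1}(L,J;\beta)\times_Y p_{N_{\bb{R}}}^{-1}(B_{\bb{L}})$ has dimension $n-3<0$ for interior hits, both empty when $n=2$).
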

	
	By definition, tautological unobstructedness will follow if we prove
	that $\bb{L}$ bounds no rigid holomorphic disks for a generic perturbation of
	$\bL$ or almost complex structures $J$.

	We start with the following proposition.
	\begin{lemma}\label{lem:intersection} Let $\bL$ be any Lagrangian multi-section.
		Any non-constant holomorphic disks bounded by $L$ must intersect the set
		$p_{N_{\bb{R}}}^{-1}(B_{\bb{L}})$.
	\end{lemma}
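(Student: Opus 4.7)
The plan is to prove the contrapositive: if $u : (D, \partial D) \to (Y, L)$ is a non-constant holomorphic disk whose image is disjoint from $p_{N_{\bb{R}}}^{-1}(B_{\bb{L}})$, then its symplectic area must vanish, contradicting non-constancy.

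First, I would exploit that the restriction $p_{\bb{L}} := p_{N_{\bb{R}}} \circ i : \til{L}\setminus S_{\bb{L}} \to N_{\bb{R}}\setminus B_{\bb{L}}$ is a genuine (unbranched) $r$-fold covering map. Under the hypothesis, $v := p_{N_{\bb{R}}} \circ u$ sends $D$ into $N_{\bb{R}}\setminus B_{\bb{L}}$. Since $D$ is simply connected, the covering lifting property produces a smooth lift $\til{v}:D \to \til{L}\setminus S_{\bb{L}}$ with $p_{\bb{L}}\circ\til{v}=v$, unique once a base point lift is prescribed.

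Next, I would identify $\til{v}|_{\partial D}$ with the natural lift of the boundary loop $u|_{\partial D}$ through the immersion $i$. Pick a base point $\theta_0\in\partial D$ with $u(\theta_0)\in L\setminus(I_{\bb{L}}\cup i(S_{\bb{L}}))$, so that there is a unique preimage $\til{q}_0\in\til{L}$ with $i(\til{q}_0)=u(\theta_0)$; I would normalize $\til{v}$ by $\til{v}(\theta_0)=\til{q}_0$. The natural lift of $u|_{\partial D}$ through $i$ extends continuously across the finitely many transverse crossings of $I_{\bb{L}}$, producing a continuous path in $\til{L}\setminus S_{\bb{L}}$ that is also a lift of $v|_{\partial D}$ through the covering $p_{\bb{L}}$. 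By uniqueness of covering lifts starting at $\til{q}_0$, the two lifts coincide, giving $i\circ\til{v}=u$ along $\partial D$.

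Finally, Stokes' theorem combined with the Lagrangian condition yields
$$
\int_D u^*\omega \;=\; \int_{\partial D} u^*\lambda \;=\; \int_{\partial D}(\til{v}|_{\partial D})^*(i^*\lambda),
$$
where $i^*\lambda$ is a closed $1$-form on $\til{L}$ because $i^*\omega=0$. The loop $\til{v}|_{\partial D}$ bounds the $2$-chain $\til{v}:D\to\til{L}$ and is therefore null-homologous in $\til{L}$, so the integral vanishes. Holomorphicity of $u$ then forces $u$ to be constant, contradicting the assumption. The main technical subtlety is matching the two lifts on the boundary at points where $u|_{\partial D}$ meets the finite set $I_{\bb{L}}$; this is resolved by continuity of the natural lift together with uniqueness of the covering lift once the base point is fixed.
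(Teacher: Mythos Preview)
Your proof is correct and follows essentially the same approach as the paper: both lift $p_{N_{\bb{R}}}\circ u$ through the unbranched covering $p_{\bb{L}}:\til{L}\setminus S_{\bb{L}}\to N_{\bb{R}}\setminus B_{\bb{L}}$ and then exploit the Lagrangian condition to force the symplectic area to vanish. The paper packages the last step as the vanishing of $\omega$ on the sphere class $[u\#\overline{w'}]$, whereas you apply Stokes' theorem directly to the closed $1$-form $i^*\lambda$ on $\til{L}$; these are equivalent, and your treatment of the boundary matching (in particular the care taken at points of $I_{\bb{L}}$) is somewhat more explicit than the paper's.
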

	\begin{proof} 
		Let $Y_0:=Y\backslash p_{N_{\bb{R}}}^{-1}(B_{\bb{L}})$. We first note that
		 $p_{\bb{L}}$ restricts to an unbranched covering map 
		 $L_0:=L\backslash S_{\bb{L}}\to N_{\bb{R}}\backslash B_{\bb{L}}$. 
		 We will then prove the lemma by contradiction.		 
		 Suppose $u:(D^2,\partial D^2)\to(Y_0,L_0)$ is a holomorphic disk
		that does not intersect $p_{N_{\bb{R}}}^{-1}(B_{\bb{L}})$. Consider the composition 
		$w:=p_{N_{\bb{R}}} \circ u:D^2\to N_{\bb{R}}$. By the standing assumption 
		$w(D^2) \cap B_{\bb{L}} = \emptyset$. Since 
		$L_0:=L\backslash S_{\bb{L}}\to N_{\bb{R}}\backslash B_{\bb{L}}$ is an unbranched covering,
		it admits a lifting $w':D^2 \to L_0$ so that $[u\#\overline w']$ is a sphere class in $Y_0$
		where $\overline w'$ is $w'$ with its domain equipped with the opposite 
		orientation.  We have
		$$
		\omega([u\# \overline w'])= \int_{D^2}u^*\omega -\int_{D^2} w'^*\omega
		= \int_{D^2}u^*\omega
		$$
		where the second integral vanishes since $L_0$ is Lagrangian. Here we note 
		that the right hand side is positive since $u$ is assumed to be
		nonconstant  pseudoholomorphic.
		On the other hand, the left hand side vanishes since $\omega = d\iota^*\lambda$ and hence
		$\omega(u \# \overline w') = 0$ for the sphere map $u \# \overline w'$ whose image is contained in
		$Y \setminus p_{N_{\bb{R}}}^{-1}(B_{\bb{R}}) \subset T^*M_{\bb{R}}$. 
		This implies $ \int_{D^2}u^*\omega = 0$
	        which gives rise to a contradiction and hence the image of $u$ must intersect 
	        $p_{N_{\bb{R}}}^{-1}(B_{\bb{L}})$.  This finishes the proof.
	        	\end{proof}
	
	Note that this lemma has nothing to do with the dimension and holds for any
	Lagrangian multi-sections for any choice of  $J$.

	With the lemma said, we now restrict ourselves to \emph{embedded} Lagrangian multi-sections
	so that $L = i(\widetilde L)$ is a smooth embedded submanifold of $Y$.
	We examine those disks that possibly intersect $p_{N_{\bb{R}}}^{-1}(B_{\bb{L}})$. For this purpose,
	we will apply the moduli intersection argument based on the generic evaluation transversality
	and the dimension counting argument. (See \cite{oh:book1} for the proof of generic
	transversality of the interior evaluation map which can be adapted for the boundary evaluation
	map as done in the more nontrivial case of contact instantons \cite{oh:contacton-transversality}.)

	\begin{remark}\label{rem:choiceJ} Since $L$ is noncompact, we need to examine the $C^0$
estimates in the study of moduli space of $J$-holomorphic disks in general. In this regard,
we would like to attract readers' attention that our SYZ projection
$p_{N_\bR}$ is \emph{not} the cotangent base projection $T^*M_\bR \to M_\bR$ and the multisection $L$ is asymptotic to some cotangent fibers $\{x\}\times N_{\bb{R}}$ of $T^*M_\bR = M_\bR \times N_\bR$.
Therefore if we consider standard almost complex structure $J$ tame to the canonical symplectic form
$\omega = \sum_{j=1}^n d\xi_j \wedge dx_j$, we can apply the monotonicity formula for the $C^0$ estimates as
done in \cite{NZ} in the construction of $A_\infty$ maps for the relevant Fukaya category on $Y$ when we regard
$L$ as an object of the category.
On the other hand, towards the \emph{vertical direction} of the SYZ projection $p_{N_\bR}$, Condition (1) of Definition \ref{def:fuk} of the category $\mathcal C$ takes care of the $C^0$-estimates.
    \end{remark}

	Now we wrap up the proof of Theorem \ref{thm:unobs}.
	\begin{proof}[Proof of Theorem \ref{thm:unobs}] As $\bb{L}$ is graded, 
	it can only bound Maslov index 0 disks. By  the definition of a tautologically unobstructed, 
	it is enough to prove that for a generic choice of $J$, there is no class $\beta \in \pi_2(Y,L)$
	for which the image of an element from the stable map
		compactification $\ol{\cu{M}}_1(L,J;\beta)$ of the moduli space
		\begin{equation}\label{eq:CMbL}
			{\cu{M}}_1(L,J;\beta) : = \{ (u,z) \mid u: z \in \partial D^2,
			D^2 \to Y, \, u(\Int(D^2)) \subset L,\, [u] = \beta, \,  \overline{\partial}_J = 0\}/\sim
		\end{equation}
		intersects $p_{N_{\bb{R}}}^{-1}(B_{\bb{L}})$.
	
	        By the standard index calculation, we have
		 $\text{\rm vir.}\dim \ol{\cu{M}}_1(L,J;\beta) = n+ 1-3 = n-2$ and hence
		$$
		\text{\rm vir.}\dim \partial  \ol{\cu{M}}_1(L,J;\beta) = n-3.
		$$		
		We now examine the image of the evaluation map
		$\ev_0:\ol{\cu{M}}_1(L,J;\beta) \to L$ in $L$. We decompose
		$$
		\ol{\cu{M}}_1(L,J;\beta) = {\cu{M}}_1(L,J;\beta) \sqcup
		 (\ol{\cu{M}}_1(L,J;\beta) \setminus {\cu{M}}_1(L,J;\beta)).
		 $$
		 We also have further decomposition
		 $$
		  (\ol{\cu{M}}_1(L,J;\beta) \setminus {\cu{M}}_1(L,J;\beta)) = 
		  \partial \ol{\cu{M}}_1(L,J;\beta) \sqcup C  \ol{\cu{M}}_1(L,J;\beta) 
		  $$
		  where $C  \ol{\cu{M}}_1(L,J;\beta)$ is the union of higher codimensional strata of 	 
		  $\ol{\cu{M}}_1(L,J;\beta)$. Since there is no non-constant holomorphic sphere in $Y$,
		  all higher codimensional strata consist of more than one disk bubbles, which are
		  easier to rule out. Therefore we will focus on the case of $\partial \ol{\cu{M}}_1(L,J;\beta)$ 
		  and show that no element therefrom can 
		  intersect $p_{N_{\bb{R}}}^{-1}(B_{\bb{L}})$. 
		  
		  For this purpose, we recall that an element therefrom is of the type in
		$$
		\ol{\cu{M}}_1(L,J;\beta_2) \times_{(\ev_0,\ev_1)} \ol{\cu{M}}_2(L,J;\beta_1)
		$$
		with $\beta = \beta_1 + \beta_2$. (Here we follow the notations from \cite{FOOO1} that
		$\ol{\cu{M}}_{k+1}(L,J;\beta)$ is the stable-map compactification of $J$-holomorphic disks
		with $k+1$ marked points enumerated as $(z_0,z_1, \ldots, z_k)$ and $\ev_i$ is the
		corresponding evaluation map at the $i$-th (boundary) marked point.)
		For a generic choice of $J$ for which various relevant evaluation transversalities
		hold,  especially the one for the map $\ev_0: \ol{\cu{M}}_1(L,J;\beta) \to L$ against
		$S_{\bL}=L\cap p_{N_{\bb{R}}}^{-1}(B_{\bb{L}})$ in $L$, we have
		$$
		\dim_{\bb{R}}(\ol{\cu{M}}_1(L,J; \beta)\times_{L}S_{\bb{L}})=(n-2)+(n-2)-n = n-4.
		$$
		Hence  \emph{when $n\leq 3$}, $\ol{\cu{M}}_1(\beta)\times_{L}S_{\bb{L}} =\emptyset$, i.e.,
		no element from $\partial \ol{\cu{M}}_1(L,J; \beta)$ intersects $S_{\bb{L}}$ along the 
		boundary.
		
		Next we examine the intersection at an interior point.	 For this purpose,
		we consider the moduli space $\ol{\cu{M}}_{0,1}(L,J;\beta)$.
		(Here $\ol{\cu{M}}_{0,k}(L,J;\beta)$ is the stable map moduli space of discs with
		$k$ interior points and $0$ boundary marked point.)
		This time we consider the interior evaluation map
		$$
		\ev^+: \ol{\cu{M}}_{0,1}(L,J;\beta) \to Y
		$$
		and consider the fiber product 
		$
		\ol{\cu{M}}_{0,1}(\beta) \times_Y p_{N_{\bb{R}}}^{-1}(B_{\bb{L}}).
		$
		Again a dimension counting provides
		$$
		\dim_{\bb{R}}\left(\ol{\cu{M}}_{0,1}(\beta) \times_Y p_{N_{\bb{R}}}^{-1}(B_{\bb{L}}) \right) 
		= (n-1) +( 2n-2) - 2n = n-3 < 0.
		$$
		Therefore  \emph{ if $n\leq 2$}, we have 
		$$
		\ol{\cu{M}}_{0,1}(\beta) \times_Y p_{N_{\bb{R}}}^{-1}(B_{\bb{L}}) = \emptyset
		$$ 
		and hence the holomorphic disks do not intersect $p_{N_{\bb{R}}}^{-1}$ in the interior
		either.		 Combining the two, we have finished the proof.
	\end{proof}
	
	The argument used in the above proof also shows the following corollary.
	
	\begin{corollary}\label{cor:immersed_unobs}
		Suppose $n =2 $. If the immersed sector of a Lagrangian multi-section $\bb{L}$ is concentrated at degree 1, then $\bb{L}$ is tautologically unobstructed.
	\end{corollary}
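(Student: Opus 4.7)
The plan is to adapt the proof of Theorem \ref{thm:unobs} to the immersed setting, keeping track of the extra generators of $CF^\bullet(\bb{L}, \bb{L}; \bb{K}) = H_{dR}^\bullet(\til{L}; \bb{K}) \oplus \bb{K}\langle R \rangle$ contributed by the immersed sector $R$. Tautological unobstructedness is the vanishing of the curvature
$$ m_0 = \sum_{\beta \in \pi_2(Y, L)} \mathbf{ev}_!(1)\, T^{\omega(\beta)} \in CF^2(\bb{L}, \bb{L}; \bb{K}), $$
so I would treat the two summands of $CF^2$ separately.

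First, since every generator of $R$ has degree one by assumption, we have $CF^2(\bb{L}, \bb{L}; \bb{K}) = H_{dR}^2(\til{L}; \bb{K})$, and the $\bb{K}\langle R \rangle$-component of $m_0$ therefore vanishes for trivial degree reasons: any component of $\overline{\cu{M}}_1(Y, \bb{L}; \beta)$ whose output marked point lies at an immersed sector $p \in R$ contributes a multiple of $p$ lying in $CF^{\deg(p)} = CF^1$, which is not part of $CF^2$. Equivalently, the virtual dimension of such a component does not match that of a $0$-dimensional point in $R$, so the pushforward $\mathbf{ev}_!(1)$ vanishes automatically.

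For the $H_{dR}^2(\til{L}; \bb{K})$-component, which is computed from the stratum of $\overline{\cu{M}}_1(Y, \bb{L}; \beta)$ with $I = \emptyset$, I would repeat the argument of Theorem \ref{thm:unobs} essentially verbatim, working in the domain $\til{L}$ of the immersion rather than in the image $L$. The three ingredients of that argument all carry over without change to immersed Lagrangian multi-sections: Lemma \ref{lem:intersection} still forces a nonconstant holomorphic disk bounded by $L$ to meet $p_{N_{\bb{R}}}^{-1}(B_{\bb{L}})$; generic transversality of the boundary evaluation $\ev_0 : \overline{\cu{M}}_1 \to \til{L}$ against $S_{\bb{L}}$ continues to hold; and the fiber-product virtual dimensions $\overline{\cu{M}}_1(L, J; \beta) \times_{\til{L}} S_{\bb{L}}$ and $\overline{\cu{M}}_{0,1}(L, J; \beta) \times_Y p_{N_{\bb{R}}}^{-1}(B_{\bb{L}})$ are still strictly negative for $n = 2$. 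Consequently no nonconstant disk contributes to the $H_{dR}^2$-component of $m_0$.

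The main subtlety I anticipate is to confirm that boundary strata of $\overline{\cu{M}}_1(Y, \bb{L}; \beta)$ arising from disk bubbling at immersed points do not contribute to $m_0$ either. Such strata introduce extra nodes at sectors of $R$, all of degree one by hypothesis, and a parallel virtual-dimension count on each bubbled component shows that these configurations also cannot land in $CF^2$ for $n = 2$; higher-codimension strata involving multiple bubbles are automatically ruled out since $Y \cong \bb{C}^n$ admits no nonconstant holomorphic spheres. Combined, these observations force $m_0 = 0$, so $\bb{L}$ is tautologically unobstructed as claimed.
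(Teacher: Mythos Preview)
Your proposal is correct and follows essentially the same approach as the paper: split $m_0$ according to whether the output lands in the immersed sector (ruled out by the degree-1 hypothesis) or in $H_{dR}^2(\til{L})$ (ruled out by the dimension count from Theorem~\ref{thm:unobs}). The paper's proof is much terser---it simply notes that $T^*N_{\bb{R}}\setminus I_{\bb{L}}$ has trivial $\pi_2$, invokes the argument of Theorem~\ref{thm:unobs} for disks bounded by $i(\til{L}\setminus I_{\bb{L}})$, and then dispatches the remaining disks ``by dimension reason''---but your expanded treatment, including the explicit check that boundary bubbling at immersed corners contributes only in the wrong degree, fills in exactly what the paper leaves implicit.
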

	\begin{proof}
		Note that $T^*N_{\bb{R}}\backslash i(I_{\bb{L}})$ has trivial second homotopy group. The argument in Theorem \ref{thm:unobs} allows us to rule out the holomorphic disks bounded by $i(\til{L}\backslash I_{\bb{L}})$. As the immersed sector is concentrated at degree 1, by dimension reason, $L$ does not bound any holomorphic disks.
	\end{proof}

	\subsection{Lagrangian multi-sections are mirror to toric vector bundles}
	
	We now use microlocal Morse theory to show that Lagrangian multi-sections are mirror to toric vector bundles, which is our first main result in this paper.
	
	\begin{definition}
		Given a Lagrangian multi-section $\bb{L}$ in $Y$. A point $\xi\in N_{\bb{R}}$ is said to be \emph{regular} if $\xi\in N_{\bb{R}}\backslash B_{\bb{L}}$ and $F_\xi \cap I_\bL = \emptyset$.
		The fiber over a regular point is called a \emph{regular fiber}.
	\end{definition}
	
	Given a graded Lagrangian multi-section $\bb{L}$ of $Y$, a regular fiber $F_{\xi}$ always intersects $L$ transversally. Hence we can talk about the degree of each intersection point. We define the \emph{degree map} $\deg_{\bb{L}}:\til{L}\backslash(p_{\bb{L}}^{-1}(B_{\bb{L}})\cup I_{\bb{L}})\to\bb{Z}$ by
	$$
	\deg_{\bb{L}}:l\mapsto\deg_{F_{p_{\bb{L}}(l)},\bb{L}}(i(l)).
	$$
	This map is continuous as $\theta_i,\theta_{\bb{L}}$ vary continuously in $p$ and $\theta_{F_{\xi}}$ is a constant. In particular, when $\til{L}$ is connected, $\deg_{\bb{L}}$ is constant as $p_{\bb{L}}^{-1}(B_{\bb{L}})\cup I_{\bb{L}}$ is of codimension $\geq 2$. Moreover, around a generic point, the image $L$ is locally Hamiltonian isotopic to a horizontal section $\{x\}\times N_{\bb{R}}$ of the SYZ projection $p_{N_\bR}: Y \to N_\bR$ which can be graded by $0$. As parity doesn't change under isotopy, we see that $\deg_{\bb{L}}\in n+2\bb{Z}$.
	
	\begin{definition}
		Let $\bb{L}$ be a graded Lagrangian multi-section of $p_{N_{\bb{R}}}:Y\to N_{\bb{R}}$. The \emph{canonical grading of $\bb{L}$} is the grading so that $\deg_{\bb{L}}=n$.
	\end{definition}
	
	For a Lagrangian multi-section $\bb{L}$ of $p_{N_{\bb{R}}}:Y\to N_{\bb{R}}$, we would like to compute
	$$
	\Hom_{\cu{F}uk_{\bb{K}}^0(Y)}(D_{m,-\sigma}[-n],\bb{L})
	$$
	for $m\in M$ and $\sigma\in\Sigma(n)$. Let $\sigma\in\Sigma$ be maximal and $\xi\in\Int(\sigma)$ be regular. We choose a proper path $\xi:[0,\infty)\to\Int(\sigma)$ starting from $\xi$ so that $-\xi(t)$ is regular for all $t\geq 0$ and
	$$\lim_{t\to\infty}\|\xi(t)\|=\infty.$$
	Such a path always exists because $B_{\bb{L}}\cup p_{\bb{L}}(I_{\bb{L}})$ are of codimension at least 2. Let $p_{-\xi(t)}\in F_{-\xi(t)}\cap L$ and consider its limit
	$$p_{-\xi}^{\infty}:=\lim_{t\to\infty}p_{-\xi(t)}\in M\times\Int(-\sigma)^{\infty}\subset\Lambda_{\Sigma}^{\infty}.$$
	Put
	$$m(p_{-\xi}):=p_{M_{\bb{R}}}(p_{-\xi}^{\infty})\in M.$$
	Although $p_{-\xi}^{\infty}$ depends on the choice of the path $\xi(t)$, the lattice element $m(p_{-\xi})$ does not; it only depends on the intersection point $p_{-\xi}\in F_{-\xi}\cap L$. It is obvious that
	$$\Hom_{\cu{F}uk_{\bb{K}}^0(Y)}(D_{m,-\xi}[-n],\bb{L})\not\simeq 0$$
	only when $m=m(p_{-\xi})$ for some $p_{-\xi}\in F_{-\xi}\cap L$ because we can always move the linking disk closer to $\Lambda_{\Sigma}^{\infty}$.

	\begin{theorem}\label{thm:degree0}
		If $(\bb{L},E)\in \cu{F}uk_{\bb{K}}^0(Y,\Lambda_{\Sigma})$ is a canonically graded Lagrangian multi-section, then for any $m\in M$ and $\sigma\in\Sigma(n)$, the Floer complex $\Hom_{\cu{F}uk_{\bb{K}}^0(Y)}(D_{m,-\sigma}[-n],(\bb{L},E))$ is concentrated at degree 0.
	\end{theorem}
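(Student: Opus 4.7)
The plan is to compute $\Hom_{\cu{F}uk_{\bb{K}}^0(Y)}^\bullet(D_{m,-\sigma}[-n],(\bb{L},E))$ directly by counting transverse intersections after choosing a convenient representative of the linking disk. First I would fix a regular $\xi\in\Int(\sigma)$, that is, one avoiding $B_{\bb{L}}$ and with $F_{-\xi}\cap I_{\bb{L}}=\emptyset$, and use the Hamiltonian flow parallel to the Liouville flow to replace $D_{m,-\sigma}$ by the representative $D_{m,-\xi}$ for which $D_{m,-\xi}\cap F_{-\xi}$ is an open ball in $F_{-\xi}$. Pushing $D_{m,-\xi}$ sufficiently deep into the conical end, the $\Lambda_{\Sigma}$-admissibility $\bb{L}^\infty\subset\Lambda_{\Sigma}^\infty$ confines every intersection $D_{m,-\xi}\cap L$ to the ball $D_{m,-\xi}\cap F_{-\xi}$; the discussion preceding the theorem already identifies these with the finitely many points $p\in F_{-\xi}\cap L$ satisfying $m(p)=m$.

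The heart of the argument is the degree computation at each such $p$. Since $D_{m,-\xi}$ contains an open neighborhood of $p$ inside $F_{-\xi}$, we have $T_p D_{m,-\xi}=T_p F_{-\xi}$, so the angles of intersection $\theta_1,\dots,\theta_n$ between $D_{m,-\xi}$ and $L$ at $p$ agree with those between $F_{-\xi}$ and $L$; transversality of $D_{m,-\xi}$ with $L$ at $p$ is inherited from that of $F_{-\xi}$ with $L$, which holds because $\xi$ is regular. Moreover, the gradings coincide at $p$: $\theta_{D_{m,-\xi}}(p)=n\pi/2=\theta_{F_{-\xi}}(p)$, by the construction of the grading on the linking disk and the canonical grading of the fiber. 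Applying the degree formula therefore yields
\[
\deg_{D_{m,-\xi},\bb{L}}(p)\;=\;\deg_{F_{-\xi},\bb{L}}(p)\;=\;n,
\]
the last equality being the defining property of the canonical grading of $\bb{L}$.

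After the shift $[-n]$, every such intersection becomes a generator of degree $0$ in $\Hom^\bullet(D_{m,-\sigma}[-n],(\bb{L},E))$, and since the contractible disk $D_{m,-\xi}$ carries only a trivial local system each intersection point contributes one copy of the fiber $E_p$ in that degree. Because the complex is supported in a single degree, the Floer differential $m_1$ vanishes for degree reasons, and we obtain
\[
\Hom_{\cu{F}uk_{\bb{K}}^0(Y)}^\bullet\!\bigl(D_{m,-\sigma}[-n],(\bb{L},E)\bigr)\;\cong\!\!\bigoplus_{\substack{p\in F_{-\xi}\cap L\\ m(p)=m}}\!\!E_p
\]
concentrated in degree $0$, proving the theorem. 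The main technical point to justify carefully will be the confinement of all intersections into the ball $D_{m,-\xi}\cap F_{-\xi}$; this will rest on the combination of the admissibility $\bb{L}^\infty\subset\Lambda_{\Sigma}^\infty$, the asymptotic behavior of $D_{m,-\xi}$ toward the component $\{m\}\times\Int(-\sigma)^\infty$ of $\Lambda_{\Sigma}^\infty$, and the quasi-isomorphism invariance of the linking disk established in \cite{GPS_sectorial_descent}.
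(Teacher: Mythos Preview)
Your proposal is correct and follows essentially the same approach as the paper's proof: choose a representative $D_{m,-\xi}$ of the linking disk coinciding with the fiber $F_{-\xi}$ on an open ball, push it far enough into the cone so that all intersections with $L$ occur inside that ball, and then invoke the canonical grading together with $\theta_{D_{m,-\xi}}=\theta_{F_{-\xi}}$ there to see every generator sits in degree $n$ (hence degree $0$ after the shift). The paper's write-up is a bit terser---it restricts to trivial $E$ and packages the confinement step via an auxiliary open set $\bb{L}_{>R_{m,-\xi_0}}\subset T^*U_{m,-\xi_0}$---but the substance is identical to what you outline, including the technical point you flag about confinement.
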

	\begin{proof}
		We only need to consider the case $E$ is trivial. Let $\sigma\in\Sigma(n)$ and $m\in M$. Fix a regular point $-\xi_0\in\Int(\sigma)$ with $\|\xi_0\|>R_m$ and $m(p_{-\xi_0})=m$. Let $D_{m,-\xi_0}$ be a representative of the linking disk $D_{m,-\sigma}$ so that $D_{m,-\xi_0}\cap F_{-\xi_0}$ projects to an open ball $U_{m,-\xi_0}\subset M_{\bb{R}}$ centered at $m$. Then there exists $R_{m,-\xi_0}>0$ such that the open subset (of $L$)
		$$\bb{L}_{>R_{m,-\xi_0}}:=\{p_{-\xi}\in L:-\xi\in\Int(\sigma) \text{ is regular},\|\xi\|>R_{m,-\xi_0},m(p_{-\xi})=m\}$$
		is an embedded Lagrangian submanifold of $T^*U_{m,-\xi_0}$ (see Figure \ref{fig:linking_disk})
		\begin{figure}[H]
			\centering
			\includegraphics[width=80mm]{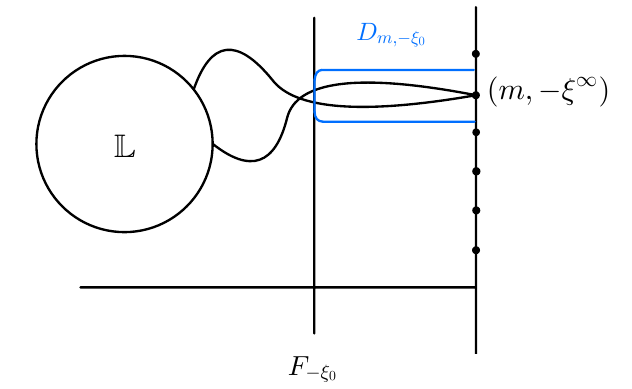}
			\caption{The linking disk $D_{m,-\xi_0}$ agrees with $F_{-\xi_0}$ on $T^*U_{m,-\xi_0}$.}
			\label{fig:linking_disk}
		\end{figure}
		By the definition and the choice of $D_{m,-\xi_0}$, it
		agrees with $F_{-\xi_0}$ on $T^*U_{m,-\xi_0}$ and hence
		$$
		D_{m,-\xi_0}\cap L=F_{-\xi_0}\cap T^*U_{m,-\xi_0}\cap L.
		$$
		With respect to the canonical grading, each intersection point of $F_{-\xi_0}$ and $L$ is of degree $n$. 
As the canonical grading of $D_{m,-\xi_0}$ is the same as $F_{-\xi_0}$ whenever they overlap, $\Hom_{\cu{F}uk_{\bb{K}}^0(Y)}(D_{m,-\xi_0}[-n],\bb{L})$ is concentrated at degree 0.
	\end{proof}
	
	By combining Theorem \ref{thm:degree0}, equivariant CCC \cite{CCC},
	HMS \cite{CCC_HMS} for toric varieties and Theorem 1.2 in the introduction, we obtain the following corollary.
	
	\begin{corollary}\label{cor:MS_gives_bundle}
		Let $\Sigma$ be a complete fan. The mirror of a canonically graded $\Lambda_{\Sigma}$-admissible Lagrangian multi-section is quasi-isomorphic to a toric vector bundle over $X_{\Sigma}$.
	\end{corollary}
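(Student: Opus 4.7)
The plan is to bootstrap the statement directly from the chain of equivalences already recorded in the paper, using Theorem \ref{thm:degree0} as the geometric input and Treumann's criterion (Theorem 2.2 in the text) as the algebraic cut-off. Concretely, let $\bb{L}$ be a canonically graded $\Lambda_{\Sigma}$-admissible Lagrangian multi-section, viewed as an object of $\cu{F}uk_{\bb{K}}^0(Y,\Lambda_{\Sigma})$; the tautological unobstructedness that is needed in order to regard $\bb{L}$ as an object of this category is taken as a standing hypothesis (for the $n=2$ case of interest, Theorem \ref{thm:unobs} and Corollary \ref{cor:immersed_unobs} supply it automatically).

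The first step is to transport $\bb{L}$ to the constructible side. By Corollary \ref{cor:eqivariant_HMS} (equivariant HMS over the Novikov field), there exists a perfect complex $\cu{E}^{\bullet}\in\cu{P}erf_T(X_{\Sigma})$ with $\mu_{\Sigma}(\kappa(\cu{E}^{\bullet}))\simeq \bb{L}$; equivalently, the constructible sheaf $\cu{F}_{\bb{L}}:=\kappa(\cu{E}^{\bullet})\in Sh_{cc,\bb{K}}(M_{\bb{R}},\Lambda_{\Sigma})$ is the mirror of $\bb{L}$ in the sense of the microlocalization functor.

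The second step is to compute the microlocal stalks of $\cu{F}_{\bb{L}}$ at every point of $\Lambda_{sm}^{\infty}$. Fix $m\in M$ and $\sigma\in\Sigma(n)$. By Theorem \ref{thm:microlocalstalk}, there is a quasi-isomorphism
\[
\mu_{m,-\sigma}(\cu{F}_{\bb{L}})\simeq \Hom_{\cu{F}uk_{\bb{K}}^0(Y)}(D_{m,-\sigma}[-n],\bb{L}).
\]
Theorem \ref{thm:degree0} shows that the right-hand side is concentrated in degree $0$ for every such pair $(m,\sigma)$, because the canonical grading of $\bb{L}$ forces every intersection point of $\bb{L}$ with a regular fiber (and hence with the chosen representative $D_{m,-\xi_0}$ of the linking disk) to be of degree $n$, which shifts to $0$ after the $[-n]$ twist.

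The final step is to invoke Treumann's criterion (Theorem 2.2 in the text; \cite[Theorem 1.9]{Morse_theory_TVB}), which applies equally over the Novikov field $\bb{K}$ since CCC is field-independent. The microlocal-stalk vanishing just established is exactly condition (2) of that theorem, so $\cu{E}^{\bullet}$ is quasi-isomorphic to a $1$-term complex concentrated in degree $0$, i.e., a single toric vector bundle on $X_{\Sigma}$. The only subtlety worth flagging is the coefficient field: one must verify that both CCC and Treumann's refinement go through verbatim over $\bb{K}$, but this is a matter of inspecting the proofs (which are purely sheaf-theoretic and insensitive to the base field), so no real obstacle arises. Packaging these three steps together yields the corollary.
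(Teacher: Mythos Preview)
Your proposal is correct and matches the paper's own argument essentially verbatim: the paper simply states that the corollary follows by combining Theorem \ref{thm:degree0}, equivariant CCC, HMS for toric varieties, and Treumann's Theorem 1.2, which is exactly the chain you spell out (with Theorem \ref{thm:microlocalstalk} making the link between microlocal stalks and linking-disk Floer complexes explicit). If anything, your write-up is more detailed than the paper's one-line justification.
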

	
	\begin{remark}
		Let $\bb{L}$ be an $\Lambda_{\Sigma}$-admissible Lagrangian multi-section and $\cu{E}_{\bb{L}}$ its mirror. At each torus fixed point $X_{\sigma}\subset X_{\Sigma}$, the proof of Theorem \ref{thm:degree0} and Theorem 1.7 in \cite{Morse_theory_TVB} allow us to deduce that
		$$(\cu{E}_{\bb{L}}|_{X_{\sigma}})_m\cong H^0(\mu_{m,-\sigma}(\cu{F}_{\bb{L}}))\cong HF^0(D_{m,-\xi}[-n],\bb{L}),$$
		for all $\xi\in\Int(\sigma)$ and $m\in M$. In particular, the degree of the branched covering map $p_{\bb{L}}:L\to N_{\bb{R}}$ equals to the rank of $\cu{E}_{\bb{L}}$. Hence we think of the link disk $D_{m,-\sigma}$ as the ``SYZ-fiber of $p_{N_{\bb{R}}}:Y\to N_{\bb{R}}$ at infinity" which is equivariantly
  mirror to the sky-scrapper sheaf at the torus fixed point $X_{\sigma}$ with 
  the equivariant structure determined by $m$.
	\end{remark}

	\section{Lagrangian realization problem}\label{sec:TGRC}
	
	We now introduce the Lagrangian realization problem. To do so, let's first recall the notion of a 
\emph{tropical Lagrangian multi-sections over a fan $\Sigma$} \cite{branched_cover_fan, Suen_trop_lag}.

    \begin{definition}
    Let $\Sigma$ be a complete fan on $N_{\bb{R}}$. An \emph{$r$-fold tropical Lagrangian multi-section over $\Sigma$} is a data $\bb{L}^{\trop}:=(L^{\trop},\Sigma_{L^{\trop}},\mu,p,\varphi^{\trop})$, where
    \begin{enumerate}
        \item $(L^{\trop},\Sigma_{L^{\trop}})$ is a connected cone complex and $p:(L^{\trop},\Sigma_{L^{\trop}})\to(N_{\bb{R}},\Sigma)$ is a continuous surjection between connected cone complexes which maps cones in $\Sigma_{L^{\trop}}$ homomorphically onto cones in $\Sigma$.
        \item $\mu:\Sigma_{L^{\trop}}\to\bb{Z}_{>0}$ is a map (called the multiplicity map) such
         that $$
         \sum_{\sigma'\subset p^{-1}(\sigma)}\mu(\sigma')=r,
         $$
        for all $\sigma\in\Sigma$.
        \item $\varphi^{\trop}:L^{\trop}\to\bb{R}$ is a continuous function so that for any 
        cone $\sigma'\in\Sigma_{L^{\trop}}$, there exists $m(\sigma')\in M$ (regarded as linear function on $N_{\bb{R}}$) 
        such that $\varphi^{\trop}|_{\sigma'}=m(\sigma')\circ p|_{\sigma'}$.
    \end{enumerate}
    A cone $\tau'\in\Sigma_{L^{\trop}}$ is said to be \emph{ramified} if $\mu(\tau')>1$. 
    \end{definition}

    In \cite{branched_cover_fan}, Payne associated a tropical Lagrangian multi-section $\bb{L}_{\cu{E}}^{\trop}$ 
    to each toric vector bundle $\cu{E}$ by considering its equivariant structure on each toric affine chart. As the name suggested, we also think of them as the ``tropicalization" of Lagrangian multi-sections in the following sense. We think of the piecewise linear function $\varphi^{\trop}$ as a ``potential function" of a Lagrangian multi-section that is roughly defined as
    $$\{(p_*(d\varphi^{\trop}(l)),p(l))\in M_{\bb{R}}\times N_{\bb{R}}:l\in L^{\trop}\}.$$
    Of course, this multi-section is far from being well-defined as both $p$ and $\varphi^{\trop}$ are not smooth.

    In this section, we try to address the following problem.
	
	\begin{question}
		Given a tropical Lagrangian multi-section $\bb{L}^{\trop}$ over a complete fan $\Sigma$, is there a (tautologically) unobstructed Lagrangian multi-section $\bb{L}\in \cu{F}uk_{\bb{K}}^0(Y,\Lambda_{\Sigma})$ whose mirror $\cu{E}_{\bb{L}}\in\cu{P}er_T(X_{\Sigma})$ has its associated tropical Lagrangian multi-section $\bb{L}^{\trop}$?
	\end{question}
	
	We call this the \emph{Lagrangian realization problem}. Of course, one cannot expect that such a mirror 
exists for all $\bb{L}^{\trop}$. As was shown in Example 5.1 in \cite{Suen_trop_lag}, there is a 2-fold tropical Lagrangian multi-section that does not arise from toric vector bundles even in dimension 2. Hence we don't expect it to have the corresponding A-side object. This implicates that we need to restrict ourselves to some special tropical Lagrangian multi-sections. (See  \cite{arguz_real_lag_CY3, Hicks_realization} for the related works.)
	
	\subsection{The $N$-generic condition}\label{sec:tangent}

	We restrict ourselves to 2-fold tropical Lagrangian multi-sections over a complete 2-dimensional fan. Topologically, there are two possibilities for such a 2-fold covering.
	\begin{itemize}
		\item [(O)] If $p:L^{\trop}\to N_{\bb{R}}$ is a maximal covering, then $L^{\trop}$ is homeomorphic to $\bb{R}^2$, and $p:L^{\trop}\to N_{\bb{R}}$ is topologically identified with the square map $z\mapsto z^2$ on $\bb{C}$. Therefore, the subset $C:=p^{-1}(S^1)\subset L^{\trop}$ is a circle. (See \cite[Definition 2.26 \& Section 2.5]{branched_cover_fan} for the definition of \emph{maximal covering}.)
		\item [(E)] If $p:L^{\trop}\to N_{\bb{R}}$ is not maximal, then we can pass to a maximal covering $p_{max}:L_{max}^{\trop}\to N_{\bb{R}}$ which is topologically the trivial covering map $N_{\bb{R}}\sqcup N_{\bb{R}}\to N_{\bb{R}}$. Therefore, the subset $C:=p^{-1}(S^1)\subset L^{\trop}$ are two disjoint circles $C^{(1)},C^{(2)}$.
	\end{itemize}

In any case, the deck group of the projection $C \to S^1$ is isomorphic to $\bb{Z}/2\bb{Z}$ 
and hence we have a unique non-trivial deck transformation $\gamma: C \to C$ with $\gamma^2 = id$. In Case (O), if we parametrize the circle $C$ by $[0,2\pi)$, 
then $\gamma$ maps $\theta$ to $\theta+\pi \text{ mod }2\pi$.
	
	\begin{definition}\label{def:N_generic}
		Let $\bb{L}^{\trop}=(L^{\trop},\Sigma_L,\varphi^{\trop})$ be a 2-fold tropical Lagrangian multi-section over a 2-dimensional complete fan $\Sigma$. We say $\bb{L}^{\trop}$ is \emph{$N$-generic} if it satisfies one of the following conditions.
		\begin{enumerate}
			\item [(O)] If $\bb{L}$ is maximal, the graph of the function $\varphi^{\trop}|_{[0,\pi)}$ intersects that of $\varphi^{\trop}\circ\gamma|_{[0,\pi)}$ transversely at $N$ smooth points.
			\item [(E)] If $\bb{L}$ is not maximal, the graph of the function $\varphi^{\trop}|_{C^{(1)}}$ intersects that of $\varphi^{\trop}|_{C^{(2)}}$ transversely at $N$ smooth points.
		\end{enumerate}
	\end{definition}
	
	\begin{remark}
		The number $N$ is completely determined by a given tropical Lagrangian multi-section $\bb{L}^{\trop}$. ``Generic" here means the graphs of $\varphi^{\trop},\varphi^{\trop}\circ\gamma$ do not intersect at corners. It can be easily shown that the parity of $N$ depends only on the topology of $L^{\trop}$, 
		namely, we have $N$ is odd in Case (O) and $N$ is even in Case (E).
	\end{remark}
	
	The reason why we consider $N$-generic objects is  the following relationship with
	the realizability of \emph{tropical Lagrangian multi-section} by rank 2 toric vector bundles.
	
	\begin{proposition}\label{prop:N>2} Let $\bb{L}^{\trop}$ be a maximal 2-fold tropical Lagrangian 
	multi-section  over a complete 2-dimensional fan $\Sigma$. Then it can be realized by a rank 2 toric vector bundle over $X_{\Sigma}$ if and only if $\bb{L}^{\trop}$ is $N$-generic with $N\geq 3$.
	\end{proposition}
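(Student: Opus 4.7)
The plan is to invoke \cite[Theorem 5.9]{Suen_trop_lag}, which gives a combinatorial criterion for realizability of a $2$-fold tropical Lagrangian multi-section by a rank $2$ toric vector bundle in terms of local weight data on the underlying cone complex, and then to show that this criterion is equivalent to the $N$-generic condition together with $N \geq 3$ in the maximal setting.

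First I would unpack what $N$-genericity means geometrically on $C = p^{-1}(S^1) \cong S^1$. Parametrizing $C$ by $[0,2\pi)$ so that $\gamma$ acts as $\theta \mapsto \theta + \pi \bmod 2\pi$, the function $\varphi^{\trop}$ is piecewise linear, and the graph intersections of $\varphi^{\trop}|_{[0,\pi)}$ and $\varphi^{\trop}\circ \gamma|_{[0,\pi)}$ occur precisely at the images, under $p$, of rank-one cones $\tau \in \Sigma$ where the two sheets of the cover carry the same weight $m(\sigma') \in M$. Transversality at a smooth point of $C$ means that as one crosses such a ray, the weight ordering on the two sheets of $p^{-1}(\sigma)$ swaps; this is exactly the local picture of a rank $2$ toric bundle whose Klyachko filtration jumps at $\tau$. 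Thus $N$ counts the number of "weight-swap rays" modulo the deck action.

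For the "only if" direction, if $\bb{L}^{\trop} = \bb{L}_{\cu{E}}^{\trop}$ for some rank $2$ toric vector bundle $\cu{E}$, I would read off the weight data of $\cu{E}$ from the Klyachko/Kaneyama description and verify that the piecewise linear function $\varphi^{\trop}$ constructed from these weights has graphs meeting only transversely at smooth points of $C$, so it is automatically $N$-generic. The bound $N \geq 3$ would then be extracted from the fact that a maximal cover (topologically $z \mapsto z^2$) forces the ramification divisor to be non-trivial; in a rank $2$ toric bundle this ramification is controlled by a non-trivial Chern class datum whose intersection pairing with $S^1$ gives an odd integer $\geq 3$ (the case $N = 1$ collapses to a configuration that can be untwisted into a decomposable situation, contradicting maximality, matching the pathology in \cite[Example 5.1]{Suen_trop_lag}).

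For the "if" direction, starting from an $N$-generic $\bb{L}^{\trop}$ with $N \geq 3$, I would apply \cite[Theorem 5.9]{Suen_trop_lag} directly: the $N$-generic condition provides the non-degenerate intersections of sheet weights required to assemble a consistent system of equivariant transition functions valued in $\mathrm{GL}_2$, while $N \geq 3$ gives enough wall crossings for the global $\mathrm{SL}_2$-cocycle to close up on the $S^1$ link, so that local rank $2$ equivariant pieces glue to a global toric vector bundle $\cu{E}$ with $\bb{L}_{\cu{E}}^{\trop} = \bb{L}^{\trop}$. The main obstacle will be the careful bookkeeping needed to match Suen's criterion with the $N$-generic condition in exactly the right way, especially to pin down the threshold $N \geq 3$ rather than $N \geq 1$; I expect this step to require a local-to-global argument on the ordered cyclic sequence of weight swaps around $S^1$, ruling out $N = 1$ by a signed count showing that a single swap produces an inconsistent monodromy on the cover.
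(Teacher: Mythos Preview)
Your plan correctly identifies \cite[Theorem 5.9]{Suen_trop_lag} as the central input, but the geometric interpretation of $N$-genericity is off, and this leads you away from the actual bridge to the threshold $N\geq 3$.

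The graph intersections $\varphi^{\trop}(\theta)=\varphi^{\trop}(\theta+\pi)$ are points where the two \emph{values} of the piecewise linear function agree, and the $N$-generic condition demands that these lie at \emph{smooth} points of $C$, i.e., in the interiors of maximal cones, \emph{not} at rank-one cones. Your description places them at rays where the sheets ``carry the same weight'', which is essentially the opposite of what genericity asserts. Consequently the ``weight-swap'' and Klyachko-jump picture, while evocative, does not match the combinatorics being counted by $N$.

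The paper's argument is much shorter than what you outline and has two pieces. First, the 2-dimensional case of \cite[Proposition 3.23]{Suen_trop_lag} says that maximal and combinatorially indecomposable is equivalent to $1$-separated with ramification in codimension $2$; this is exactly what guarantees that the graphs of $\varphi^{\trop}$ and $\varphi^{\trop}\circ\gamma$ meet only at smooth points and transversally, so $\bb{L}^{\trop}$ is automatically $N$-generic for some $N$. Second, Theorem 5.9 of \cite{Suen_trop_lag} states that realizability is equivalent to the \emph{slope condition} of \cite[Definition 5.8]{Suen_trop_lag}, and this slope condition is equivalent to $N\geq 3$ by the intermediate value theorem applied to the difference $\varphi^{\trop}-\varphi^{\trop}\circ\gamma$ on $[0,\pi)$. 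No Chern-class or monodromy argument is needed to isolate the threshold; the IVT does it in one line once the slope condition is in hand. Your cocycle-closure and signed-count sketches for ruling out $N=1$ are therefore unnecessary detours, and as written they are too vague to constitute an argument.
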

	\begin{proof}  This is a consequence of a combination of Proposition 3.23 
	and Theorem 5.9 in  \cite{Suen_trop_lag}. More specifically we have the following 
	which is the 2-dimensional case of \cite[Proposition 3.23]{Suen_trop_lag}.
	\begin{lemma} Let $\dim N_\bR = 2$. Then the following two are equivalent:
	\begin{enumerate}
	\item  $\bL^{\text{\rm trop}}$ is combinatorially  indecomposable and maximal.
	\item  $\bL^{\text{\rm trop}}$ is 1-separated  and the ramification locus
	of $p: \bL^{\text{\rm trop}} \to N_{\bR}$ lies in the codimension 2 strata of $(L, \Sigma_L)$.
	\end{enumerate}
	\end{lemma}	
	This lemma shows that the graph of the function $\varphi^{\trop}|_{[0,\pi)}$
	 intersects that of $\varphi^{\trop}\circ\gamma|_{[0,\pi)}$ only at smooth points 
	 and the intersections are transverse. Then
	 \cite[Theorem 5.9]{Suen_trop_lag}  implies $\bb{L}^{\trop}$ satisfies 
	 the slope condition (\cite[Definition 5.8]{Suen_trop_lag}): The latter
	  is equivalent to $N\geq 3$ by the intermediate value theorem.
	\end{proof}
	
	\begin{remark}
		The realization problem is trivial for Case (E) because we can easily construct two Lagrangian sections by smoothing the two piecewise linear functions suitably. However, if we ask ourselves whether $\bb{L}^{\trop}$ can be realized by \emph{embedded object}, the question becomes non-trivial and this is what we are going to address.
	\end{remark}

	\begin{example}
		On a smooth projective surface $X_{\Sigma}$, the associated tropical Lagrangian multi-section of the tangent bundle $T_{X_{\Sigma}}$ is $\#\Sigma(1)$-generic because it takes the value 1 on a lift of $v_{\rho}$ and the value 0 on the other lift. Note that when $X_{\Sigma}$ is complete, $\#\Sigma(1)\geq 3$.
	\end{example}
	
	At this moment, $L^{\trop}$ carries no smooth structure and thus we cannot talk about smoothing of $p$. Nevertheless, $L^{\trop}$ has a smooth structure away from a neighborhood of the minimal cone. 
	
\subsection{Smoothing of tropical Lagrangian multi-sections}

We first outline our strategy of smoothing $L^{\trop}$.  We decompose 
$L^{\trop}$ into the union of the region outside a sufficiently large disk in $N_{\bR}$, 
one inside some smaller disk and the intermediate region in between the two.
We call each of them \emph{the outer, the inner and the annular part respectively.}

\subsubsection{Smoothing of the outer part}

On the complement of the disk $D_R:=\{\|\xi\|\leq R\}$ with $R>0$ sufficiently large, we define
	$$
	C_{\geq R}:=\begin{cases}
	\bb{C}\backslash D_{\sqrt{R}} & \text{ if }N \text{ is odd},\\
	(\bb{C}\backslash D_R)\sqcup(\bb{C}\backslash D_R) & \text{ if }N \text{ is even},
	\end{cases}
	$$
	and identify $N_{\bb{R}}$ with $\bb{C}$ with complex coordinate $\xi=\xi_1+\sqrt{-1}\xi_2$. Let $h:C_{\geq R}\to L^{\trop}\backslash p^{-1}(D_R)$ be a homeomorphism so that 
$$
p\circ h=p_{std}: C_{\geq R} \subset \bC \to N_{\bR} \cong \bC
$$
where $p_{std}$ is the standard 2-fold branched covering map given by the formula
$$
p_{std}(l): = 
	\begin{dcases} l^2 & \text{ if }N \text{ is odd},\\
	l & \text{ if }N\text{ is even}.
	\end{dcases}
$$
We first construct a Lagrangian multi-section $\bb{L}_{\varphi_{\geq R}}$ over $N_{\bb{R}}\backslash D_R$ with asymptotic condition at infinity determined by $\bb{L}^{\trop}$ as follows. The tropical Lagrangian multi-section $\bb{L}^{\trop}$ determines a conical Lagrangian subset
	$$
	\Lambda_{\bb{L}^{\trop}}:=\bigcup_{\tau'\in\Sigma_L}m(\tau')\times(-\tau)\subset\Lambda_{\Sigma},
	$$
	where we regard the slope $m(\tau')$ of $\varphi^{\trop}$ along $\tau'\in\Sigma_L$ as a coset of $M_{\bb{R}}$. In order to avoid the sign issue arising from the minus sign in the $N_{\bb{R}}$-coordinate of $\Lambda_{\bb{L}^{\trop}}$ and $\Lambda_{\Sigma}$, we work with
    $$
    \alpha(\Lambda_{\bb{L}^{\trop}})=\bigcup_{\tau'\in\Sigma_L}m(\tau')\times\tau\subset\alpha(\Lambda_{\Sigma}),
    $$
    where $\alpha:Y\to Y$ is the involution $(x,\xi)\mapsto(x,-\xi)$. After we construct our Lagrangian multi-section with asymptotics in $\alpha(\Lambda_{\bb{L}^{\trop}})^{\infty}$, we simply apply $\alpha$ again 
     to obtain a Lagrangian multi-section with asymptotics back in $\Lambda_{\bb{L}^{\trop}}^{\infty}$.
	
	\begin{lemma}\label{lem:infinity_part}
		Suppose $\bb{L}^{\trop}$ is $N$-generic. Then for each $R>0$, there exists a smoothing $\varphi_{\geq R}$ of $\varphi^{\trop}\circ h$ such that the outer part
		$$
		\bb{L}_{\varphi_{\geq R}}
		:= \left\{\left((p_{std}^*)^{-1}d(\varphi_{\geq R}(l)),p_{std}(l)\right)
		\in M_{\bb{R}}\times N_{\bb{R}}:l\in C_{\geq R}\right\}
		$$
		is an embedded Lagrangian multi-section over $N_{\bb{R}}\backslash D_R$ and satisfies $\bb{L}_{\varphi_{\geq R}}^{\infty}\subset\alpha(\Lambda_{\bb{L}^{\trop}})^{\infty}$.
	\end{lemma}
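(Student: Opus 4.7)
The plan is to smooth $\varphi^{\trop}\circ h$ only across the rays of $\Sigma_L$ lying in $C_{\geq R}$, where $C^1$-regularity fails. For each such ray $\rho'\in\Sigma_L(1)$, let $\ell_{\pm}$ denote the two linear pieces of $\varphi^{\trop}\circ h$ on the adjacent $2$-cones; continuity of $\varphi^{\trop}$ gives $(\ell_+-\ell_-)|_{\rho'}\equiv 0$. Pick a narrow scale-invariant angular neighborhood $U_{\rho'}\subset C_{\geq R}$ of $\rho'$ and a smooth angular step function $\psi_{\rho'}$ with $\psi_{\rho'}\equiv 0$ near the $-$ boundary of $U_{\rho'}$ and $\psi_{\rho'}\equiv 1$ near the $+$ boundary, and set
\[
\varphi_{\geq R} := \begin{cases}\ell_-+\psi_{\rho'}(\ell_+-\ell_-) & \text{on } U_{\rho'},\\ \varphi^{\trop}\circ h & \text{outside } \bigcup_{\rho'}U_{\rho'}.\end{cases}
\]
This is smooth on $C_{\geq R}$, and $\bb{L}_{\varphi_{\geq R}}$ is automatically Lagrangian since locally (on each regular sheet of $p_{std}$) it is the graph of the exact $1$-form $d(\varphi_{\geq R}\circ p_{std}^{-1})$; it is a multi-section because $p_{std}$ is by construction a branched cover.

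For the asymptotic condition $\bb{L}_{\varphi_{\geq R}}^{\infty}\subset\alpha(\Lambda_{\bb{L}^{\trop}})^{\infty}$, I first note that outside the $p_{std}$-images of the tubes $U_{\rho'}$ the pushed-forward slope equals $m(\sigma')$, placing the Lagrangian in $\{m(\sigma')\}\times\sigma\subset\alpha(\Lambda_{\bb{L}^{\trop}})$; inside the tubes the interpolated slope is a convex combination of the adjacent $m(\sigma')$'s, whose accumulation as $\|\xi\|\to\infty$ over a ray $\tau\in\Sigma(1)$ lies in the stratum $\bigcup_{\tau'\subset p^{-1}(\tau)}m(\tau')\times\tau$ of $\alpha(\Lambda_{\bb{L}^{\trop}})$. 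Shrinking the angular apertures of the $U_{\rho'}$ if necessary, this verification is essentially bookkeeping.

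The main obstacle is embeddedness. Two distinct points $l_1\neq l_2\in C_{\geq R}$ with $p_{std}(l_1)=p_{std}(l_2)=\xi$ glue in $Y$ exactly when their pushed-forward slopes in $T^*_\xi N_{\bb{R}}$ coincide. Away from the smoothing tubes this cannot happen, since the two sheets over a $2$-cone $\sigma\in\Sigma$ sit at distinct constant slopes $m(\sigma'_1)\neq m(\sigma'_2)$ (by the maximality of $\bb{L}^{\trop}$ in the odd case and by the definition of the two components in the even case). The delicate regime is $\xi$ near a ray $\tau\in\Sigma(1)$, where both sheets interpolate simultaneously and the difference of the interpolated slopes traces a segment $(1-t)(m_1^- - m_2^-)+t(m_1^+ - m_2^+)$ in $M_{\bb{R}}$; embeddedness requires this segment to avoid the origin for all transition parameters $t\in[0,1]$. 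The plan is to leverage Proposition \ref{prop:N>2} together with the slope condition of \cite[Thm.\ 5.9]{Suen_trop_lag}, both equivalent to $N$-genericity with $N\geq 3$, to extract a consistent sign pattern for $m_1^{\pm} - m_2^{\pm}$ across $\tau$ that forces the above segment to stay away from $0$. The two lifts $\tau'_1,\tau'_2$ of $\tau$ belong to distinct rays of $\Sigma_L$, so I retain the freedom to choose their apertures and step functions $\psi_{\rho'}$ independently, giving additional room to separate the two interpolation paths. Translating the $N$-generic combinatorics into this slope inequality is the step I expect to require the most care.
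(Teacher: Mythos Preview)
Your smoothing has two genuine gaps.

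\textbf{Asymptotic condition.} Scale-invariant tubes do not give $\bb{L}_{\varphi_{\geq R}}^{\infty}\subset\alpha(\Lambda_{\bb{L}^{\trop}})^{\infty}$. With a fixed angular aperture around each ray $\rho'$ there are directions $\theta$ strictly inside the tube but off the ray; along any such direction the pushed-forward slope remains a nontrivial convex combination $m_t$ of $m(\sigma'_+)$ and $m(\sigma'_-)$ for all $r$. The limit point $(m_t,\theta^{\infty})\in S^*M_{\bb{R}}$ has $\theta^{\infty}\in\Int(\sigma)^{\infty}$, and over $\Int(\sigma)$ the only stratum of $\alpha(\Lambda_{\bb{L}^{\trop}})^{\infty}$ is $\{m(\sigma^{(i)})\}\times\Int(\sigma)^{\infty}$, which excludes $m_t$. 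Passing to a narrower but still constant aperture does not help. The paper remedies this by letting the angular width decay: the bump is $\phi_{\delta}(r(\theta-\alpha_i))$, so the smoothing neighbourhood has width $O(1/r)$ and every direction off a ray eventually leaves all tubes and carries the constant slope $m(\sigma')$.

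\textbf{Embeddedness.} Your plan is aimed at the wrong hypothesis. You intend to invoke Proposition~\ref{prop:N>2} and the slope condition of \cite{Suen_trop_lag}, but those are equivalent to $N\geq 3$, whereas the present lemma is stated (and proved in the paper) for every $N$-generic $\bb{L}^{\trop}$, including $N=1$. The paper uses $N$-genericity directly and more cheaply: writing $\varphi^{\trop}\circ h(r,\theta)=r^2 f(\theta)$ in the odd case, genericity says precisely that $f(\alpha_i)\neq f(\alpha_i+\pi)$ at each corner $\alpha_i$. With the shrinking smoothing above (and a small constant $m_{\delta}$ chosen from \eqref{eqn:f_lower_bound}), this sign propagates to $\partial_r\varphi_{\geq R}(r,\theta)-\partial_r\varphi_{\geq R}(r,\theta+\pi)$ throughout the tube, so the two sheets never have matching $(\partial_r,\partial_\theta)$. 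Your segment criterion $(1-t)(m_1^{-}-m_2^{-})+t(m_1^{+}-m_2^{+})\neq 0$ also omits the contribution $(\ell_+-\ell_-)\,d\psi_{\rho'}$ to the pushed-forward slope, which is of the same order as the difference you are trying to keep away from $0$; even granting the combinatorics, the reduction to a straight segment in $M_{\bb{R}}$ is not valid.
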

	\begin{proof}
		We abuse the notation $\varphi^{\trop}$ for $\varphi^{\trop}\circ h$ for the
		notational convenience. 
		Let $(r,\theta)$ be the polar coordinate of $C_{\geq R}$. We shall focus on the odd $N$ case.
  
        We regard the map $\theta\mapsto\varphi^{\trop}(r,\theta)$ as a $2\pi$-periodic function 
        on $\bb{R}$. As $\varphi^{\trop}$ is piecewise linear, we can write
		\begin{equation}\label{eqn:radial_formula_varphi_R}
		    \varphi^{\trop}|_{\sigma^{(i)}}(r,\theta)=r^2\left(a_i\cos(2\theta)+b_i\sin(2\theta)\right)=:r^2f(\theta),
		\end{equation}
        for some $a_i,b_i\in\bb{Z}$ such that $m(\sigma^{(i)})=(a_i,b_i)$ and so $(a_1,b_1)\neq (a_2,b_2)$ 
        by the $N$-genericity. Let $\alpha_i\in[0,2\pi]$ be the angles where $\varphi^{\trop}(r,-)$ is not smooth. 
        Note that $\alpha_i$'s are all independent of $r$, and $N$-genericity implies
        $$f(\alpha_i)\neq f(\alpha_i+\pi)$$
        for all $i$. Hence there exists $\delta>0$ such that
        $$
        f(\theta)\neq f(\theta+\pi)
        $$
        for all $\theta\in\bigcup_i[\alpha_i-\delta,\alpha_i+\delta]$ and so $|f(\theta)-f(\theta+\pi)|$ receives a positive lower bound on $\bigcup_i[\alpha_i-\delta,\alpha_i+\delta]$. For this $\delta>0$, 
        we can choose $m_{\delta}>0$ so small that
        \begin{equation}\label{eqn:f_lower_bound}
            |f(\theta)-f(\theta+\pi)|\geq m_{\delta}|f(\alpha_i)-f(\alpha_i+\pi)|
        \end{equation}
        for all $\theta\in[\alpha_i-\delta,\alpha_i+\delta]$ and for all $i$. Using the same $\delta>0$, let $\phi_{\delta}:\bb{R}\to\bb{R}_{\geq 0}$ be a non-negative smooth function such that
        $$
        \phi_{\delta}(\theta)=\begin{dcases}
            1 & \text{ for }|\theta|\leq\frac{\delta}{2},\\
            0 & \text{ for }|\theta|\geq\delta. 
        \end{dcases}
        $$
        The convex combination
		$$\phi_{\delta}(r(\theta-\alpha_i))\, m_{\delta}\varphi^{\trop}(r,\alpha_i)
		+(1-\phi_{\delta}(r(\theta-\alpha_i))\, \varphi^{\trop}(r,\theta)$$
		is a smooth $2\pi$-periodic function on $\bb{R}$. We can shrink $\delta>0$ so that the support of $\phi_{\delta}(r(\theta-\alpha_i))$'s are all disjoint for all $r\geq R$. Therefore, we can apply this construction to every $\alpha_i$ separately and obtain a smooth function $\varphi_{\geq R}:C_{\geq R}\to\bb{R}$.
        Since each smoothing neighbourhood shrinks to the corresponding rays
		$\{\theta = \alpha_i\}$ as $r\to\infty$, we have
		\begin{equation}\label{eqn:asymptotic_condition}
			\lim_{r\to\infty}(p_{std}^*)^{-1}(d\varphi_{\geq R}(r,\theta))=m(\tau'),
		\end{equation}
		if $(r,\theta)\in\tau' \in \Sigma_L$. Define
		\begin{equation}\label{eqn:lag}
			\bb{L}_{\varphi_{\geq R}}:=\{((p_{std}^*)^{-1}d(\varphi_{\geq R}(l)),p_{std}(l))\in M_{\bb{R}}\times N_{\bb{R}}:l\in C_{\geq R}\}.
		\end{equation}
		It is clear that $\bb{L}_{\varphi_{\geq R}}$ is a multi-section over $N_{\bb{R}}\backslash D_R$. Moreover, it follows from \eqref{eqn:asymptotic_condition} that $\bb{L}_{\varphi_{\geq R}}^{\infty}\subset\alpha(\Lambda_{\bb{L}^{\trop}})^{\infty}$.
		
		It remains to show that $\bb{L}_{\varphi_{\geq R}}$ is embedded. A straightforward calculation shows that being embedded is equivalent to
		$$
		\pd{\varphi_{\geq R}}{r}(r,\theta)\neq\pd{\varphi_{\geq R}}{r}(r,\theta+\pi)\text{ or }\pd{\varphi_{\geq R}}{\theta}(r,\theta)\neq\pd{\varphi_{\geq R}}{\theta}(r,\theta+\pi),
		$$
		for all $(r,\theta)\in[R,\infty)\times[0,2\pi]$. We have
		\begin{align*}
			\pd{}{\theta}\varphi^{\trop}\circ h|_{\sigma^{(i)}}=&\,r^2f'(\theta)=2r^2\left(-a_i\sin(2\theta)+b_i\cos(2\theta)\right),\\
			\pd{}{r}\varphi^{\trop}\circ h|_{\sigma^{(i)}}=&\,2rf(\theta)=2r\left(a_i\cos(2\theta)+b_i\sin(2\theta)\right).
		\end{align*}
		An elementary calculation shows that there are no solutions to
		$$
		\begin{dcases}
		f(\theta)=f(\theta+\pi)\\
		f'(\theta)=f'(\theta+\pi)
		\end{dcases}
		$$
		Hence there are no solutions to
		$$
		\begin{dcases}
		\pd{\varphi_{\geq R}}{r}(r,\theta)=\pd{\varphi_{\geq R}}{r}(r,\theta+\pi)\\
		\pd{\varphi_{\geq R}}{\theta}(r,\theta)=\pd{\varphi_{\geq R}}{\theta}(r,\theta+\pi)
		\end{dcases}
		$$
		outside the smoothing neighbourhood. 
		Within the smoothing neighbourhood $[\alpha_i-\frac{\delta}{r},\alpha_i+\frac{\delta}{r}]$, 
		note that
        \begin{eqnarray*}
        &{}& 
            \pd{\varphi_{\geq R}}{r}(r,\alpha_i)-\pd{\varphi_{\geq R}}{r}(r,\alpha_i+\pi) \\
            & =&\,2r\big(\phi(r(\theta-\alpha_i))m_{\delta}(f(\alpha_i)-f(\alpha_i+\pi)) 
             +(1-\phi(r(\theta-\alpha_i))(f(\theta)-f(\theta+\pi)))\big)\\
            &{}& \quad +r^2\phi'(r(\theta-\alpha_i)) \cdot (\theta-\alpha_i)\cdot 
            \big(m_{\delta}(f(\alpha_i)-f(\alpha_i+\pi))
            -(f(\theta)-f(\theta+\pi))\big)
        \end{eqnarray*}
        Since $\phi'(r(\theta-\alpha_i))(\theta-\alpha_i)\leq 0$, by \eqref{eqn:f_lower_bound}, we have
        $$f(\alpha_i)-f(\alpha_i)>0 \,(\text{resp.}<0) \Longrightarrow \pd{\varphi_{\geq R}}{r}(r,\alpha_i)-\pd{\varphi_{\geq R}}{r}(r,\alpha_i+\pi)>0 \,(\text{resp.}<0),$$
        for all $\theta\in[\alpha_i-\delta,\alpha_i+\delta]$. This proves embeddedness in the smoothing neighbourhoods. As a whole, $\bb{L}_{\varphi_{\geq R}}$ must be embedded. The even case is similar and therefore omitted.
	\end{proof}

The proof of Lemma \ref{lem:infinity_part} actually gives the following lemma, which will be used later.

\begin{lemma}\label{lem:ineq_relation_varphi_R}
    For any $R>0$, the smoothing $\varphi_{\geq R}$ of $\varphi^{\trop}\circ h$ constructed in Lemma \ref{lem:infinity_part} satisfies the inequality relation
    $$\varphi_{\geq R}(l^{(1)})-\varphi_{\geq R}(l^{(2)})>0\Longrightarrow\pd{\varphi_{\geq R}}{r}(l^{(1)})-\pd{\varphi_{\geq R}}{r}(l^{(2)})>0,$$
    for all $|l|\geq R$.
\end{lemma}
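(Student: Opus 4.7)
The plan is to prove the equivalent statement that $\varphi_{\geq R}(l^{(1)}) - \varphi_{\geq R}(l^{(2)})$ and $\pd{\varphi_{\geq R}}{r}(l^{(1)}) - \pd{\varphi_{\geq R}}{r}(l^{(2)})$ always share the same sign, where $l^{(1)}, l^{(2)} \in C_{\geq R}$ denote the two $p_{std}$-preimages of a common point of $N_{\bb{R}}\setminus D_R$. This immediately gives the claimed implication, and the main work lies in the odd case.

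The key geometric observation in the odd case is that the set $\{\alpha_i\}$ of non-smooth angles of $\theta\mapsto\varphi^{\trop}(r,\theta)$ is invariant under the deck transformation $\theta\mapsto\theta+\pi$, since every ray of $\Sigma_L$ comes paired with another obtained by rotation by $\pi$ under $p_{std}(l)=l^2$. Consequently, whenever $l^{(1)}=(r,\theta)$ lies in the smoothing window $[\alpha_i-\delta/r,\alpha_i+\delta/r]$ around $\alpha_i$, the preimage $l^{(2)}=(r,\theta+\pi)$ lies in the corresponding window around $\alpha_i+\pi$, and the very same cutoff value $\phi_\delta(r(\theta-\alpha_i))=\phi_\delta(r(\theta+\pi-(\alpha_i+\pi)))$ is used at both points; if $l^{(1)}$ lies outside all smoothing windows, then so does $l^{(2)}$.

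Outside all smoothing windows, the formula $\varphi_{\geq R}(r,\theta)=r^2 f(\theta)$ yields
\[
\varphi_{\geq R}(l^{(1)})-\varphi_{\geq R}(l^{(2)})=r^2(f(\theta)-f(\theta+\pi)), \qquad \pd{\varphi_{\geq R}}{r}(l^{(1)})-\pd{\varphi_{\geq R}}{r}(l^{(2)})=2r(f(\theta)-f(\theta+\pi)),
\]
so both differences share the sign of $f(\theta)-f(\theta+\pi)$ and the implication is immediate. Inside a smoothing window around $\alpha_i$, a direct computation yields
\[
\varphi_{\geq R}(l^{(1)})-\varphi_{\geq R}(l^{(2)})=r^2\bigl[\phi\,m_\delta(f(\alpha_i)-f(\alpha_i+\pi))+(1-\phi)(f(\theta)-f(\theta+\pi))\bigr],
\]
while the parallel expression for $\pd{\varphi_{\geq R}}{r}(l^{(1)})-\pd{\varphi_{\geq R}}{r}(l^{(2)})$ has already been recorded in the proof of Lemma \ref{lem:infinity_part}. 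The lower bound \eqref{eqn:f_lower_bound} together with continuity force $f(\theta)-f(\theta+\pi)$ to have the constant (nonzero) sign of $f(\alpha_i)-f(\alpha_i+\pi)$ throughout $[\alpha_i-\delta,\alpha_i+\delta]$; combined with $\phi'_\delta(r(\theta-\alpha_i))(\theta-\alpha_i)\le 0$ already exploited in Lemma \ref{lem:infinity_part}, this forces both differences to share the sign of $f(\alpha_i)-f(\alpha_i+\pi)$.

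The even case follows the same template, with the two sheets $C^{(1)}$ and $C^{(2)}$ handled independently and with $r^2$ replaced by $r$ in the polar formula, so the corresponding differences are proportional up to a positive factor depending only on $r$. The main obstacle, therefore, is purely bookkeeping inside the smoothing windows in the odd case, and it is resolved entirely by the $\bb{Z}/2$-symmetry of $\{\alpha_i\}$ described above, which reduces the verification to the sign computation already performed in Lemma \ref{lem:infinity_part}.
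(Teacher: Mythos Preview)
Your proof is correct and follows essentially the same route as the paper. The paper does not give a separate proof of this lemma but simply remarks that it is contained in the proof of Lemma~\ref{lem:infinity_part}; you have unpacked that remark, making explicit the $\bb{Z}/2\bb{Z}$-invariance of the set $\{\alpha_i\}$ under $\theta\mapsto\theta+\pi$ (which the paper uses tacitly when writing the difference formula for $\pd{\varphi_{\geq R}}{r}$ at $\theta$ and $\theta+\pi$) and separating the trivial ``outside'' case from the smoothing-window case.
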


\subsubsection{Surgery of the inner part: hyper-elliptic local model}
	
Now we need to  glue back the inner part over $D_R\subset N_{\bb{R}}$
after smoothing it.
 To do so, we need a local model of Lagrangian multi-section over $D_R$ with a well-controlled immersed sector 
 to guarantee unobstructedness, the explanation of which is now in order.

 When $N\geq 3$, such a local model is determined by the number $N$ as follows.
	
	\begin{itemize}
		\item [(O)] If $N=2k+1>1$, we put $g=k-1$. Consider the hyper-elliptic curve
		$$L_{f_{2g+1}}:=\{(x,\xi)\in M_{\bb{R}}\times N_{\bb{R}}:x^2=f_{2g+1}(\xi)\},$$
		where the complex structure on $M_{\bb{R}}\times N_{\bb{R}}$ is given by $x:=x_1-\sqrt{-1}x_2,\xi:=\xi_1+\sqrt{-1}\xi_2$ and
		$$f_{2g+1}(\xi):=a_{2g+1}(\xi^{2g+1}+a_{2g}\xi^{2g}+\cdots+a_1\xi)\in\bb{R}[\xi],$$
		is a polynomial with roots having multiplicity at most 2 and $a_d>0$. The second projection $p_{N_{\bb{R}}}:L_{f_{2g+1}}\to N_{\bb{R}}$ is a 2-fold branched covering map. The underlying topological surface of $L_{f_{2g+1}}$ has arithmetic genus $g$ and 1 puncture.
		\item [(E)] If $N=2k+2>2$, we put $g=k-1$. Consider the hyper-elliptic curve
		$$L_{f_{2g+2}}:=\{(x,\xi)\in M_{\bb{R}}\times N_{\bb{R}}:x^2=f_{2g+2}(\xi)\}$$
		where
		$$f_{2g+2}(\xi):=a_{2g+2}(\xi^{2g+2}+a_{2g+1}\xi^{2g+1}+\cdots+a_1\xi)\in\bb{R}[\xi]$$
		is a polynomial with roots having multiplicity at most 2 and $a_{2g+2}>0$. The second projection $p_{N_{\bb{R}}}:L_{f_{2g+2}}\to N_{\bb{R}}$ is a 2-fold branched covering map.  The underlying topological surface of $L_{f_{2g+2}}$ has arithmetic genus $g$ and 2 punctures.
	\end{itemize}

    \begin{remark}
        See Remark \ref{rem:direct_sum} for the reason why we allow roots of multiplicity 2.
    \end{remark}
	
	For $d\in\{2g+1,2g+2\}$, as $L_{f_d}$ is a holomorphic curve, it becomes a (special) Lagrangian immersion after hyper-K\"ahler rotation. Clearly $L_{f_d}$ is a 2-fold multi-section of $p_{N_{\bb{R}}}:Y\to N_{\bb{R}}$, but it doesn't have any control at the infinity $Y^{\infty}$.
	
	From now on, we assume $N\geq 3$. We want to glue $L_{f_d}$ with $\bb{L}_{\varphi_{\geq R}}$ along their boundaries, so we need to write $L_{f_d}$ as in the form of (\ref{eqn:lag}). 
	However, due to the non-trivial topology of $L_{f_d}$, it may not be possible. But luckily, we don't need 
	the coordinate description (\ref{eqn:lag}) to hold everywhere, we just need it to hold on the cylindrical ends of $L_{f_d}$. To do so, we consider the complex coordinate functions $x(l),\xi(l)$ of $L_{f_d}$. Let $d\in\{2g+1,2g+2\}$. In any case, we can regard $\sqrt{f_d}$ as a single-valued holomorphic function on $L_{f_d}$. We want to solve for a function $\varphi$ so that
	$$(p_{N_{\bb{R}}}^*)^{-1}(d\varphi(l))=(x_1(l),x_2(l)),$$
	which can be written as
	$$d\varphi={\rm{Re}}(\sqrt{f_d}p_{N_{\bb{R}}}^*d\xi).$$
	This leads us to consider the holomorphic 1-form
	$$\alpha_{f_d}:=\sqrt{f_d}p_{N_{\bb{R}}}^*d\xi.$$
	Let $R_0>0$. Note that $N$ and $d$ have the same parity, so
	$$C_{\geq R_0}=\begin{dcases}
	\bb{C}\backslash D_{\sqrt{R_0}} & \text{ if }d \text{ is odd},\\
	(\bb{C}\backslash D_{R_0})\sqcup(\bb{C}\backslash D_{R_0}) & \text{ if }d \text{ is even},
	\end{dcases}$$
	Let $F:C_{\geq R_0}\to L_{f_d}$ be a holomorphic embedding so that $F$ maps $C_{\geq R_0}$ to the cylindrical ends of $L_{f_d}$ and $p_{N_{\bb{R}}}\circ F=p_{std}$. We have
	$$F^*\alpha_{f_d}=\begin{dcases}
	2l\sqrt{f_d(l^2)}dl  & \text{ if }d\text{ is odd},\\
	\pm\sqrt{f_d(l)}dl & \text{ if }d\text{ is even}.
	\end{dcases}$$
	For $R_0>0$ large enough, for any $|l|>R_0$, we have the power series expansion
	$$F^*\alpha_{f_d}=\begin{dcases}
	2\sqrt{a_d}l^{d+1}\left(1+\sum_{i=1}^{\infty}c_il^{-2i}\right)dl  & \text{ if }d\text{ is odd},\\
	\pm \sqrt{a_d}l^{\frac{d}{2}}\left(1+\sum_{i=1}^{\infty}c_il^{-i}\right)dl & \text{ if }d\text{ is even},
	\end{dcases}$$
	for some constant $c_i\in\bb{R}$. By integrating term-by-term, we get a multi-valued primitive $\varphi_{f_d}^{\bb{C}}$ of $F^*\alpha_{f_d}$, which has the form
	$$\varphi_{f_d}^{\bb{C}}(l):=\begin{dcases}
	\sqrt{a_d}l^{d+2}\left(c_0+\sum_{i=1}^{\infty}c_il^{-2i}\right)  & \text{ if }d\text{ is odd},\\
	\pm \sqrt{a_d}l^{\frac{d}{2}+1}\left(c_0+\sum_{\substack{i\geq 1\\i\neq\frac{d}{2}+1}}c_il^{-i}\right)+ c_{\frac{d}{2}+1}\log(l) & \text{ if }d\text{ is even}
	\end{dcases}$$
	after absorbing constants into $c_i$. As $f_d$ has real coefficients $c_i\in\bb{R}$ for all $i\geq 0$. Therefore, the real part $\varphi_{f_d}$ of $\varphi_{f_d}^{\bb{C}}$ is single-valued\footnote{The condition $f_d\in\bb{R}[\xi]$ just for convenience. It works perfectly well for $f_d\in\bb{C}[\xi]$ as long as $\mathrm{Im}(c_{\frac{d}{2}+1})=0$ when $d$ is even.} and hence a primitive of ${\rm{Re}}(F^*\alpha_{f_d})$. Using the polar coordinate $(r,\theta)$ on $C_{\geq R_0}$, this primitive is given by
	\begin{equation}\label{eqn:radial_formula_varphi_std}
	\varphi_{f_d}(r,\theta)=
	\begin{dcases}
	\sqrt{a_d}\sum_{i=0}^{\infty} c_ir^{d+2-2i} \cos((d+2-2i)\theta) & \text{ if } d\text{ is odd},\\
	\pm\sqrt{a_d}\sum_{\substack{i\geq 0\\i\neq\frac{d}{2}+1}}c_ir^{\frac{d}{2}+1-i}\cos\left(\left(\frac{d}{2}+1-i\right)\theta\right)+c_{\frac{d}{2}+1}\log(r) & \text{ if }d\text{ is even}.
	\end{dcases}
 \end{equation}
 We denote by $\varphi_{f_d}^{\pm}$ the two disjoint branches of $\varphi_{f_d}$ in the even case. Motivated by the definition of the $N$-genericity condition, we should look at solutions to the equation
    
	$$
	\begin{dcases}
	    \varphi_{f_d}(r,\theta)=\varphi_{f_d}(r,\theta+\pi),\,\theta\in [0,\pi) & \text{ if }d\text{ is odd},\\
        \varphi_{f_d}^{+}(r,\theta)=\varphi_{f_d}^{-}(r,\theta),\,\theta\in [0,2\pi) & \text{ if }d\text{ is even}.
	\end{dcases}
	$$

    Note that zeros of $\theta\mapsto\varphi_{f_d}(r,\theta)$ in $[0,\pi)$ (when $d$ is odd) or $[0,2\pi)$ (when $d$ is even) are precisely the solution to the above equations. We compare the zeros of $\varphi_{f_d}$ and its leading order term. When $r>R_0$, it is easy to show that $\varphi_{f_d}$ and its angular derivative satisfy the following estimates
    \begin{equation}\label{eqn:ineqn_odd}
        \begin{dcases}
        \left|\frac{1}{\sqrt{a_d}r^{d+2}}\varphi_{f_d}(r,\theta)-c_0\cos((d+2)\theta)\right|\leq C_{R_0}r^{-2},\\
        \left|\frac{1}{\sqrt{a_d}r^{d+2}}\pd{\varphi_{f_d}}{\theta}(r,\theta)+(d+2)c_0\sin((d+2)\theta)\right|\leq C_{R_0}r^{-2}
    \end{dcases}
    \end{equation}
    when $d$ is odd and
    \begin{equation}\label{eqn:ineqn_even}
        \begin{dcases}
        \left|\frac{1}{\sqrt{a_d}r^{\frac{d}{2}+1}}\varphi_{f_d}(r,\theta)-c_0\cos\left(\left(\frac{d}{2}+1)\theta\right)\right)\right|\leq C_{R_0}r^{-1},\\
        \left|\frac{1}{\sqrt{a_d}r^{\frac{d}{2}+1}}\pd{\varphi_{f_d}}{\theta}(r,\theta)+\left(\frac{d}{2}+1\right)c_0\sin\left(\left(\frac{d}{2}+1)\theta\right)\right)\right|\leq C_{R_0}r^{-1}
    \end{dcases}
    \end{equation}
    when $d$ is even, for some constant $C_{R_0}>0$ independent of $a_d$.

    \begin{lemma}\label{lem:solutions}
        When $d$ is odd (resp. even), there exists $R>R_0$ such that for any $r>R$, the function $\theta\mapsto\varphi_{f_d}(r,\theta)$ has $d+2$ many distinct zeros $\theta_i^d(r)\in[0,\pi)$ (resp. $[0,2\pi)$), $i=1,\dots,d+2$. Moreover, the zeros $\theta_i^d(r)$ vary smoothly in $r$ for all $i=1,\dots,d+2$. 
    \end{lemma}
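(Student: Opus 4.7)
The plan is to apply a perturbative zero-counting argument based on the estimates \eqref{eqn:ineqn_odd} and \eqref{eqn:ineqn_even}, which say precisely that after normalizing by $\sqrt{a_d}r^{d+2}$ (odd case) or $\sqrt{a_d}r^{d/2+1}$ (even case), the function $\varphi_{f_d}(r,\cdot)$ and its $\theta$-derivative are uniformly $C^1$-close to their respective leading terms
$$
L_d(\theta):=\begin{dcases} c_0\cos((d+2)\theta), & d \text{ odd},\\ c_0\cos\left(\tfrac{d}{2}+1)\theta\right), & d \text{ even},\end{dcases}
$$
with an error of order $O(r^{-2})$ or $O(r^{-1})$ respectively, uniformly in $\theta$.

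First I would count and analyze the zeros of $L_d$. In the odd case, $\cos((d+2)\theta)$ vanishes exactly at $\theta_i^\circ = \frac{(2i-1)\pi}{2(d+2)}$ for $i=1,\dots,d+2$, which gives $d+2$ simple zeros in $[0,\pi)$; at each such $\theta_i^\circ$ the derivative $L_d'(\theta_i^\circ)= \mp (d+2)c_0$ is nonzero, so these zeros are non-degenerate. In the even case the same count gives $d+2$ simple zeros in $[0,2\pi)$. Let $m>0$ be a uniform lower bound for $|L_d'|$ on a union of small closed neighbourhoods $I_i$ of the $\theta_i^\circ$, and let $M>0$ be a uniform lower bound for $|L_d|$ on the complement $[0,\pi)\setminus\bigcup I_i$ (resp.\ $[0,2\pi)\setminus\bigcup I_i$); both exist since the $\theta_i^\circ$ are isolated.

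Now I would choose $R>R_0$ so large that for all $r>R$ the error bounds in \eqref{eqn:ineqn_odd}, \eqref{eqn:ineqn_even} are strictly less than $\min(m,M)/2$. Then on each $I_i$, the normalized derivative of $\varphi_{f_d}(r,\cdot)$ has the same sign as $L_d'(\theta_i^\circ)$ throughout $I_i$, so $\varphi_{f_d}(r,\cdot)$ is strictly monotone on $I_i$; on the other hand its value at each endpoint of $I_i$ has the same sign as $L_d$ there, so by the intermediate value theorem it has exactly one zero $\theta_i^d(r)$ in $I_i$. Outside $\bigcup I_i$, the normalized function stays within $M/2$ of $L_d$ and therefore does not vanish. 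This gives exactly $d+2$ zeros, all distinct.

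Finally, smooth dependence on $r$ follows from the implicit function theorem applied to the equation $\varphi_{f_d}(r,\theta)=0$ near each $\theta_i^d(r)$: the partial derivative $\partial_\theta\varphi_{f_d}(r,\theta_i^d(r))$ is nonzero by construction, and $\varphi_{f_d}$ is real-analytic in $(r,\theta)$, so the implicit solution is smooth in $r$. The main delicate point here is purely quantitative, namely fixing the neighbourhoods $I_i$ and then choosing $R$ large enough so that the $r$-dependent error genuinely dominates nowhere outside $\bigcup I_i$; once those uniform constants $m, M$ are in place, everything else reduces to a textbook stability argument for simple zeros of a one-variable function.
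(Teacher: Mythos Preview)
Your proposal is correct and follows essentially the same approach as the paper: both arguments normalize $\varphi_{f_d}(r,\cdot)$ by the leading power, use the estimates \eqref{eqn:ineqn_odd}--\eqref{eqn:ineqn_even} to trap the function near $c_0\cos((d+2)\theta)$, locate exactly $d+2$ zeros via the intermediate value theorem together with a definite sign on the angular derivative near each candidate zero, and then invoke the implicit function theorem for smooth dependence on $r$. The only cosmetic difference is that the paper partitions $[0,\pi)$ using the explicit level sets $\cos((d+2)\theta)=\pm\tfrac{1}{2}$, whereas you use abstract neighbourhoods $I_i$ with uniform bounds $m,M$; these are interchangeable.
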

    \begin{proof}
        We only consider the odd case. The even case follows by exactly the same argument. Choose $R>1$ large enough so that $2C_{R_0}R^{-2}<c_0$. Then
        $$
        c_0\left(-\frac{1}{2}+\cos((d+2)\theta)\right)<\frac{1}{\sqrt{a_d}r^{d+2}}\varphi_{f_d}(r,\theta)
        <c_0\left(\frac{1}{2}+\cos((d+2)\theta)\right),
        $$
        for all $(r,\theta)$ with $r>R$. Note that the equation
        $$
        \cos((d+2)\theta))=\pm \frac12
        $$
        has $d+2$ solutions in $(0,\pi)$, say $\theta_i^{\pm}$, $i=1,\dots,d+2$, so
        $\varphi_{f_d}(r,\theta)>0$ whenever $\theta\in [0,\theta_1^+]\sqcup[\theta_2^+,\theta_3^+]\sqcup\dots\sqcup[\theta_{d+1}^+,\theta_{d+2}^+]$ and $\varphi_{f_d}(r,\theta)<0$ whenever $\theta\in[\theta_1^-,\theta_2^-]\sqcup[\theta_3^-,\theta_4^-]\sqcup\dots\sqcup[\theta_{d+2}^-,\pi)$. By 
        the intermediate value theorem, $\varphi_{f_d}$ must have zeros in $(\theta_1^+,\theta_1^-),(\theta_2^-,\theta_2^+),\dots,(\theta_{d+2}^+,\theta_{d+2}^-)$. The angular derivative estimate in \eqref{eqn:ineqn_odd} implies $\pd{\varphi_{f_d}}{\theta}$ has a definite sign on each of these open intervals and hence the \emph{solution} must be unique in these intervals. Hence there are exactly $d+2$ of them. 
         Denote these solutions by $\theta_i^d(r)$, $i=1,\dots,d+2$. As the intersection between 
         the graph of $\varphi_{f_d}(r,-)$ and the $\theta$-axis is transverse for all $r>R$,  
         each of these solutions varies smoothly in $r$ by the implicit function theorem.
    \end{proof}
	We list the solutions $\theta_i^d(r)$ guaranteed by Lemma \ref{lem:solutions} in the increasing order
$\theta_i^d(r)<\theta_{i+1}^{std}(r)$. Note that while the constant $R$ and these solutions depend on the constants $c_i$'s
 they are all independent of the coefficient $a_d$. By a suitable choice of $a_d$, we would like
 to rescale $L_{f_d}$ so that it becomes almost of the ``same size" as that of $\bb{L}_{\varphi_{\geq R}}$ 
 along the intermediate annular region so that we can glue
  the outer part $\bb{L}_{\varphi_{\geq R}}$ and $L_{f_d}$ along the 
  annular part without creating extra immersed points. In other words, we want the immersed sector 
  of the resulting Lagrangian multi-section to lie over the points in $\bC$ that are exactly the same 
  as the roots of the polynomial $f_d$ which have multiplicity $2$. 
	
	Let $\theta_i$ be the angles that solve the equation
	\begin{equation}\label{eqn:theta_i}
	    \begin{dcases}
	    \varphi_{\geq R}(r,\theta_i)=\varphi_{\geq R}(r,\theta_i+\pi), \theta_i\in [0,\pi) & \text{ when }d\text{ is odd},\\
        \varphi_{\geq R}^{+}(r,\theta_i)=\varphi_{\geq R}^{-}(r,\theta_i), \theta_i\in [0,2\pi) & \text{ when }d\text{ is even}.
	\end{dcases}
	\end{equation}
By our construction of $\varphi_{\geq R}$ in Lemma \ref{lem:infinity_part}, these are also the solutions to $\varphi^{\trop}\circ h=\varphi^{\trop}\circ h\circ\gamma$.
     By the $N$-genericity, there are $N=d+2$ of them. We assume $\theta_i<\theta_{i+1}$, for all $i=1,\dots,d+1$. The key observation is that $\varphi_{\geq R}$ and $\varphi_{f_d}$ share the same $N$-genericity condition. Our idea is to glue $\varphi_{\geq R}$ and $\varphi_{f_d}$ (hence $\bb{L}_{\varphi_{\geq R}}$ and $L_{f_d}$) along 
    the intermediate annular region so that the angle $\theta_i$ matches with $\theta_i^d(r)$ for all $i=1,\dots,d+2$. We therefore need to modify  $\varphi_{f_d}$ so that its zeros become exactly the same as $\theta_i$'s.
     We refer readers to Figure \ref{fig:rho} and \ref{fig:glue} for some geometric intuition.
    
    To begin, we look at the asymptotics of the roots $\theta_i^d(r)$ as $r\to\infty$.

    \begin{lemma}\label{lem:bound_theta_std}
        Let $R>R_0>0$ be in Lemma \ref{lem:solutions}. Then there exists a constant $M_{R_0}>0$, independent of $a_d>0$ and $R>0$, such that the solutions $\theta_i^d(r)$ satisfy the inequality
        $$
        \left|\pd{\theta_i^d}{r}(r,\theta)\right|\leq\begin{dcases}
            \frac{M_{R_0}}{r^3} & \text{ if }d\text{ is odd},\\
            \frac{M_{R_0}}{r^2} & \text{ if }d\text{ is even},
        \end{dcases}
        $$
        for all $r>R$ and $\theta\in[0,2\pi]$.
    \end{lemma}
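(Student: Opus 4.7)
The plan is to apply the implicit function theorem and carefully estimate the numerator and denominator that appear. Since by construction $\varphi_{f_d}(r,\theta_i^d(r))\equiv 0$, differentiating in $r$ gives
$$
\pd{\theta_i^d}{r}(r)=-\frac{\partial_r\varphi_{f_d}(r,\theta_i^d(r))}{\partial_\theta\varphi_{f_d}(r,\theta_i^d(r))},
$$
so the lemma reduces to producing, uniformly in $a_d$, a lower bound on the denominator and an upper bound on the numerator.

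For the denominator, the first estimate in \eqref{eqn:ineqn_odd} together with $\varphi_{f_d}(r,\theta_i^d(r))=0$ forces $|c_0\cos((d+2)\theta_i^d(r))|\le C_{R_0}r^{-2}$ (in the odd case), and similarly $|c_0\cos((\tfrac d2+1)\theta_i^d(r))|\le C_{R_0}r^{-1}$ in the even case via \eqref{eqn:ineqn_even}. Enlarging $R$ if necessary (but keeping it independent of $a_d$, since the constants $c_i$ and $C_{R_0}$ do not depend on $a_d$), we get $|\sin((d+2)\theta_i^d(r))|\ge \tfrac12$, respectively $|\sin((\tfrac d2+1)\theta_i^d(r))|\ge\tfrac12$. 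Plugging this back into the angular estimate of \eqref{eqn:ineqn_odd} (resp.\ \eqref{eqn:ineqn_even}) gives
$$
|\partial_\theta\varphi_{f_d}(r,\theta_i^d(r))|\;\ge\; c\sqrt{a_d}\,r^{d+2}\quad(\text{odd}),\qquad |\partial_\theta\varphi_{f_d}(r,\theta_i^d(r))|\;\ge\; c\sqrt{a_d}\,r^{d/2+1}\quad(\text{even}),
$$
for a constant $c>0$ depending only on $R_0$.

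For the numerator, differentiating the explicit series \eqref{eqn:radial_formula_varphi_std} term-by-term yields, in the odd case,
$$
\partial_r\varphi_{f_d}(r,\theta)=\sqrt{a_d}\sum_{i\ge 0}(d+2-2i)c_i r^{d+1-2i}\cos((d+2-2i)\theta),
$$
and a parallel expression (plus the harmless term $c_{d/2+1}/r$) in the even case. At $\theta=\theta_i^d(r)$ the top term already carries the factor $\cos((d+2)\theta_i^d)=O(r^{-2})$ (resp.\ $O(r^{-1})$), so it contributes $O(\sqrt{a_d}\,r^{d-1})$ (resp.\ $O(\sqrt{a_d}\,r^{d/2-1})$), and every subsequent term is automatically of that order or smaller. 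Thus
$$
|\partial_r\varphi_{f_d}(r,\theta_i^d(r))|\le C\sqrt{a_d}\,r^{d-1}\quad(\text{odd}),\qquad |\partial_r\varphi_{f_d}(r,\theta_i^d(r))|\le C\sqrt{a_d}\,r^{d/2-1}\quad(\text{even}),
$$
with $C$ depending only on $R_0$. Taking the ratio, the $\sqrt{a_d}$ cancels and we obtain the claimed bound $M_{R_0}/r^3$, respectively $M_{R_0}/r^2$.

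The main subtle point, and the only thing that requires care, is the passage from the vanishing of $\varphi_{f_d}$ at $\theta_i^d$ to a \emph{uniform} (in $a_d$) lower bound on $|\sin|$ and hence on the angular derivative; this works precisely because the subleading coefficients $c_i$ in \eqref{eqn:radial_formula_varphi_std} arise from the power series expansion of $\sqrt{f_d}/\sqrt{a_d}$ and therefore do not depend on $a_d$, so the error constants $C_{R_0}$ in \eqref{eqn:ineqn_odd}--\eqref{eqn:ineqn_even} are $a_d$-independent. Everything else is a routine term-by-term differentiation of the convergent series for $r>R_0$.
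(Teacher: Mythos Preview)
Your proof is correct and follows essentially the same approach as the paper: differentiate the defining relation $\varphi_{f_d}(r,\theta_i^d(r))=0$ implicitly, then use the estimates \eqref{eqn:ineqn_odd}--\eqref{eqn:ineqn_even} to bound $|\cos((d+2)\theta_i^d)|\le C_{R_0}r^{-2}$ and $|\sin((d+2)\theta_i^d)|\ge C_{R_0}'$, and finally read off the claimed decay from the ratio. Your write-up is in fact more explicit than the paper's, which simply records the cosine and sine bounds and then declares the conclusion ``easy to obtain''; your separation into numerator and denominator estimates, and your remark that the $a_d$-independence comes from the fact that the $c_i$'s and hence $C_{R_0}$ do not depend on $a_d$, make the argument cleaner.
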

    \begin{proof}
        Differentiate $\varphi_{f_d}(r,\theta_i^d(r))=0$ with respect to $r$, we obtain
        $$\pd{\theta_i^d}{r}=\frac{\dpsum{i\geq 0}{}c_ir^{d+1-2i}\cos((d+2-2i)\theta_i^d(r))}{\dpsum{i\geq 0}{}c_ir^{d+2-2i}\sin((d+2-2i)\theta_i^d(r))}.$$
        By \eqref{eqn:ineqn_odd}, we have
        \begin{align*}
            \left|\cos((d+2)\theta_i^d(r))\right|\leq &\,\frac{C_{R_0}}{r^2}.\\
            \left|\sin((d+2)\theta_i^d(r))\right|\geq &\,C_{R_0}',
        \end{align*}
        for some constant $C_{R_0},C_{R_0}'>0$ which is independent of $a_d$ and $R>0$. Then it is easy to obtain
        $$\left|\pd{\theta_i^d}{r}(r,\theta)\right|\leq\frac{M_{R_0}}{r^3},$$
        for all $r>R$ and $\theta\in[0,2\pi]$. The even case is similar by applying \eqref{eqn:ineqn_even}.
    \end{proof}
    
    Let $R>R_0>0$ be in Lemma \ref{lem:solutions} and $\varepsilon\in(0,1)$ be fixed so that $R-\varepsilon>R_0$. Lemma \ref{lem:bound_theta_std} motivates us to consider a smooth function $\rho:[R-\varepsilon,\infty)\times\bb{R}\to\bb{R}$ with the following properties:
    \begin{itemize}
    \item We have
    \begin{align*}
        \rho(r,\theta+\pi)=&\,\rho(r,\theta)+\pi \text{ when } d \text{ is odd},\\
        \rho(r,\theta+2\pi)=&\,\rho(r,\theta)+2\pi \text{ when } d \text{ is even}
    \end{align*}
    \item $\rho$ satisfies
   $$
	    \rho(r,\theta) =
    \begin{dcases}
	    \theta_i^d(r) & \text{ if } r\geq R+\varepsilon\text{ and }\theta=\theta_i,\\
        \theta & \text{ if }r\in[R-\varepsilon,R],
	\end{dcases}
$$
\item and
\begin{eqnarray*}
    \pd{\rho}{\theta}(r,\theta)& > &\,0\,,\forall (r,\theta)\in[R-\varepsilon,\infty)\times\bb{R},\\
    \left|\pd{\rho}{r}(r,\theta)\right|& \leq &\,
    \begin{dcases}
        \frac{M_{R_0}}{r^3}, \forall\,(r,\theta)\in[R+\varepsilon,\infty)\times\bb{R}, \text{ if }d\text{ is odd},\\
        \frac{M_{R_0}}{r^2}, \forall\, (r,\theta)\in[R+\varepsilon,\infty)\times\bb{R}, \text{ if }d\text{ is even}
    \end{dcases}
     \end{eqnarray*}
    for some constant $M_{R_0}>0$ independent of $a_d,R>0$. See Figure \ref{fig:rho_function}.
\end{itemize}

    \begin{figure}[H]
		\centering
		\includegraphics[width=60mm]{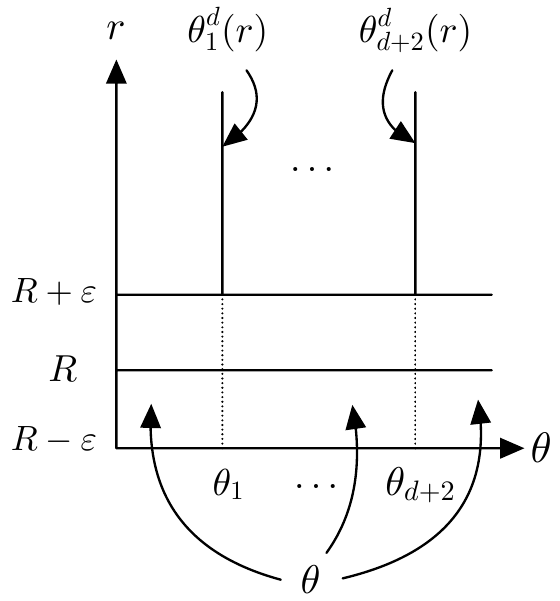}
		\caption{The function $(r,\theta)\mapsto\rho(r,\theta)$. The curved arrows indicate the value of $\rho(r,\theta)$ at different regions. The $r$-derivative decay at a rate $r^{-3}$ (resp. $r^{-2}$) after $r\geq R+\varepsilon$. We extend $\rho$ to the whole $\bb{R}$ by declaring that $\rho(r,\theta+\pi)=\rho(r,\theta)+\pi$ in the odd case and $\rho(r,\theta+2\pi)=\rho(r,\theta)+2\pi$ in the even case.}
        \label{fig:rho_function}
	\end{figure}
    As $\pd{\rho}{\theta}(r,\theta)>0$, for all $r\geq R-\varepsilon$ and $\theta\in\bb{R}$, we get a diffeomorphism $F_{\rho}:C_{\geq R-\varepsilon}\to C_{\geq R-\varepsilon}$ defined by
	$$
F_{\rho}:(r,\theta)\mapsto(r,\rho(r,\theta))
$$
	that carries $(r,\theta_i)$ to $(r,\theta_i^d(r))$, for all $i=1,\dots,d+2$. Define
	$$
\varphi_{f_d}^{\rho}(r,\theta):=(\varphi_{f_d}\circ F_{\rho})(r,\theta)=\varphi_{f_d}(r,\rho(r,\theta)).
$$
    As $\rho(r,\theta)=\theta$ for all $r\in[R-\varepsilon,R]$, the embedded Lagrangian
	$$L_{\geq R-\varepsilon}^{\rho}:=\{(p_{std}^*)^{-1}(d\varphi_{f_d}^{\rho}(l)),p_{std}(l))\in M_{\bb{R}}\times N_{\bb{R}}:|l|\geq R-\varepsilon\}$$
	now agrees with $L_{f_d}$ when $|l|\in[R-\varepsilon,R]$ and therefore can be extended to an immersed Lagrangian multi-section $L_{f_d}^{\rho}\subset Y$ with the immersed sector being given by roots of $f_d$. See Figure \ref{fig:rho}.
 
    \begin{figure}[H]
		\centering
		\includegraphics[width=130mm]{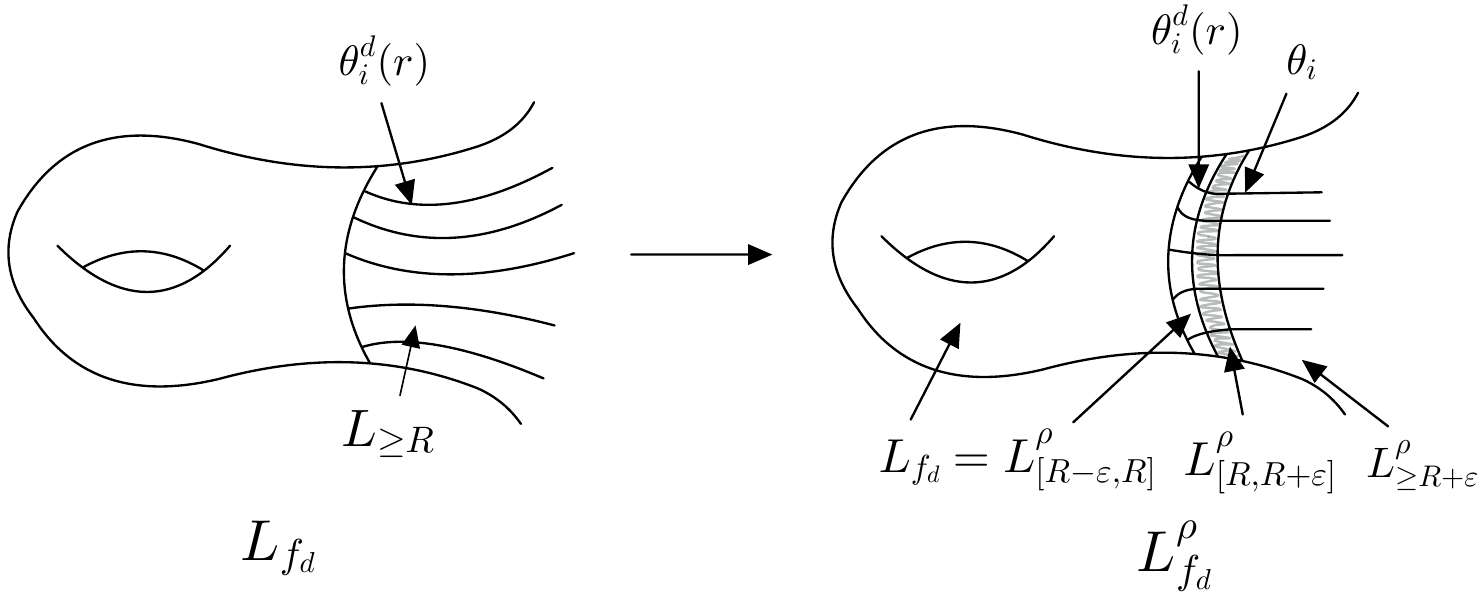}
		\caption{We modifying $L_{\geq R}$ by $\rho$ so that the angles $\theta_i^d(r)$'s match with $\theta_i$'s.}
		\label{fig:rho}
	\end{figure}

\subsubsection{The intermediate annular part: gluing}
 
	Now, we glue $L_{f_d}^{\rho}$ with $\bb{L}_{\varphi_{\geq R}}$ along the annulus $C_{[R+\varepsilon,R+1]}$. 
To do so, we first need to choose the branch of $\varphi_{f_d}$ so that $\varphi_{f_d}^{\rho}$ ``looks like" $\varphi_{\geq R}$ in the following sense. By applying the involution $(x,\xi)\mapsto(-x,\xi)$ to $L_{f_d}$ if necessary, we may assume
	\begin{equation}\label{eqn:same_sign}
	    \varphi_{f_d}^{\rho}(l^{(1)})-\varphi_{f_d}^{\rho}(l^{(2)})>0 \Longleftrightarrow\varphi_{\geq R}(l^{(1)})-\varphi_{\geq R}(l^{(2)})>0.
	\end{equation}
    See Figure \ref{fig:same_sign} for the Case (O). As the intersections are transversal, for each $\theta_i$, there exists a small open interval $I_i$ of $\theta_i$ such their angular derivatives on $I_i$ also satisfy a similar inequality relation:
    \begin{equation}\label{eqn:angular_same_sign}
		\begin{dcases}
		    \pd{}{\theta}(\varphi_{f_d}^{\rho}(r,\theta)-\varphi_{f_d}^{\rho}(r,\theta+\pi))>0 \Longleftrightarrow\pd{}{\theta}(\varphi_{\geq R}(r,\theta)-\varphi_{\geq R}(r,\theta+\pi))>0 & \text{ if }d\text{ is odd},\\
            \pd{}{\theta}(\varphi_{f_d}^{\rho,+}(r,\theta)-\varphi_{f_d}^{\rho,-}(r,\theta))>0 \Longleftrightarrow\pd{}{\theta}(\varphi_{\geq R}^{+}(r,\theta)-\varphi_{\geq R}^{-}(r,\theta))>0 & \text{ if }d\text{ is even},
		\end{dcases}
	\end{equation}
    for all $\theta\in I_i$ and for all $i=1,\dots,d+2$. Analogously to Lemma \ref{lem:ineq_relation_varphi_R}, 
    we have the following inequality relations for the radial derivatives of $\varphi_{f_d}^{\rho}$. 
    \begin{figure}[H]
		\centering
		\includegraphics[width=130mm]{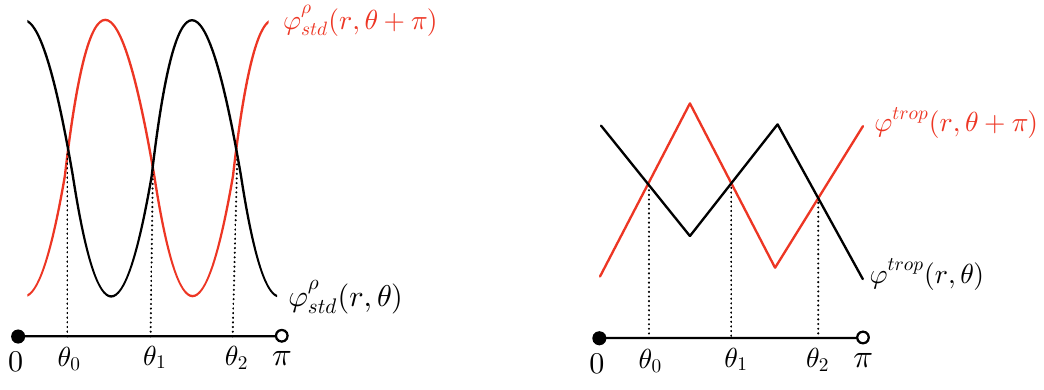}
		\caption{This is Case (O), where $N=3$. As $\varphi_{\geq R}$ is obtained by rounding codimension 1 corners of $\varphi^{\trop}$, we may assume, by changing the branch of $\varphi_{f_d}$ if needed, that is, by adding $\pi$ to $\theta$, the differences $\varphi_{f_d}^{\rho}(r,\theta)-\varphi_{f_d}^{\rho}(r,\theta+\pi)$ and $\varphi_{\geq R}(r,\theta)-\varphi_{\geq R}(r,\theta+\pi)$ have same sign everywhere on $[0,\pi)$.}
        \label{fig:same_sign}
	\end{figure}
    \begin{lemma}\label{lem:ineq_relation_varphi_std}
        There exists $R>0$, independent of $a_d$, such that for any small open intervals $I_i$ of $\theta_i$, we have
        $$\begin{dcases}
            \varphi_{f_d}^{\rho}(r,\theta)-\varphi_{f_d}^{\rho}(r,\theta+\pi)>0\, (<0) \Longrightarrow\pd{\varphi_{f_d}^{\rho}}{r}(r,\theta)-\pd{\varphi_{f_d}^{\rho}}{r}(r,\theta+\pi)>0\, (<0) & \text{ if }d\text{ is odd},\\
            \varphi_{f_d}^{\rho,+}(r,\theta)-\varphi_{f_d}^{\rho,-}(r,\theta)>0\, (<0) \Longrightarrow\pd{\varphi_{f_d}^{\rho,+}}{r}(r,\theta)-\pd{\varphi_{f_d}^{\rho,-}}{r}(r,\theta)>0\, (<0) & \text{ if }d\text{ is even},
        \end{dcases}$$
        for all $r>R+\varepsilon$ and $\theta\notin \bigcup_{i=1}^{d+2}I_i$
    \end{lemma}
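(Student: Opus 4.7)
The plan is to reduce the claim to a simpler sign comparison and then exploit the explicit leading-order asymptotics of $\varphi_{f_d}$ together with the decay of $\pd{\rho}{r}$. First I would observe that the power series form \eqref{eqn:radial_formula_varphi_std} implies in the odd case $\varphi_{f_d}(r,\theta+\pi)=-\varphi_{f_d}(r,\theta)$ (since every nonzero mode $d+2-2i$ is odd). Because $\rho(r,\theta+\pi)=\rho(r,\theta)+\pi$ by construction, the same antisymmetry passes to $\varphi_{f_d}^{\rho}$ and its $r$-derivative:
$$
\varphi_{f_d}^{\rho}(r,\theta+\pi)=-\varphi_{f_d}^{\rho}(r,\theta),\qquad \pd{\varphi_{f_d}^{\rho}}{r}(r,\theta+\pi)=-\pd{\varphi_{f_d}^{\rho}}{r}(r,\theta).
$$
Hence the odd-case implication reduces to showing
$$
\varphi_{f_d}^{\rho}(r,\theta)>0\ (<0)\ \Longrightarrow\ \pd{\varphi_{f_d}^{\rho}}{r}(r,\theta)>0\ (<0)
$$
outside a neighbourhood of the zero set $\{\theta_i\}$. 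For the even case one runs the same reduction after noting that the two branches $\varphi_{f_d}^{\rho,\pm}$ differ only by an overall sign in their leading cosine contribution, so the claim reduces to the analogous statement for the difference.

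Next, applying the chain rule gives
$$
\pd{\varphi_{f_d}^{\rho}}{r}(r,\theta)=\pd{\varphi_{f_d}}{r}\bigl(r,\rho(r,\theta)\bigr)+\pd{\varphi_{f_d}}{\theta}\bigl(r,\rho(r,\theta)\bigr)\,\pd{\rho}{r}(r,\theta).
$$
Using \eqref{eqn:radial_formula_varphi_std} termwise one gets, in the odd case, the uniform expansions
$\varphi_{f_d}=\sqrt{a_d}\,r^{d+2}\bigl[c_0\cos((d+2)\theta)+O(r^{-2})\bigr]$,
$\pd{\varphi_{f_d}}{r}=\sqrt{a_d}\,(d+2)r^{d+1}\bigl[c_0\cos((d+2)\theta)+O(r^{-2})\bigr]$,
$\pd{\varphi_{f_d}}{\theta}=-\sqrt{a_d}\,(d+2)r^{d+2}\bigl[c_0\sin((d+2)\theta)+O(r^{-2})\bigr]$, with analogous expressions in the even case with exponent $d/2+1$. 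Combined with the decay $|\pd{\rho}{r}|\le M_{R_0}r^{-3}$ (odd) or $M_{R_0}r^{-2}$ (even) built into the construction of $\rho$, the cross term is of order $O(\sqrt{a_d}\,r^{d-1})$ (odd) or $O(\sqrt{a_d}\,r^{d/2-1})$ (even), two powers smaller than $\pd{\varphi_{f_d}}{r}(r,\rho)$. Therefore, for $r$ large,
$$
\pd{\varphi_{f_d}^{\rho}}{r}(r,\theta)=\sqrt{a_d}\,(d+2)r^{d+1}\bigl[c_0\cos\bigl((d+2)\rho(r,\theta)\bigr)+O(r^{-2})\bigr],
$$
and similarly $\varphi_{f_d}^{\rho}(r,\theta)=\sqrt{a_d}\,r^{d+2}\bigl[c_0\cos\bigl((d+2)\rho(r,\theta)\bigr)+O(r^{-2})\bigr]$, so both quantities carry the sign of the common leading-order factor $c_0\cos\bigl((d+2)\rho(r,\theta)\bigr)$.

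Finally I would use the interval condition $\theta\notin\bigcup_i I_i$ to pin this sign down. The zeros of $\theta\mapsto\varphi_{f_d}^{\rho}(r,\theta)$ are precisely at $\theta_i$ since $\rho(r,\theta_i)=\theta_i^d(r)$ for $r\ge R+\varepsilon$ and $\theta_i^d(r)$ are the zeros of $\varphi_{f_d}(r,\cdot)$ by Lemma \ref{lem:solutions}. Choosing the neighbourhoods $I_i$ of $\theta_i$ sufficiently small and using the monotonicity of $\rho$ in $\theta$, one obtains a lower bound $|c_0\cos((d+2)\rho(r,\theta))|\ge c>0$ uniformly in $r\ge R+\varepsilon$ and $\theta\notin\bigcup_i I_i$, where $c$ is independent of $a_d$ (the asymptotic convergence $\theta_i^d(r)\to$ odd multiples of $\pi/(2(d+2))$, implicit in \eqref{eqn:ineqn_odd}, guarantees this uniformity). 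Enlarging $R$ if necessary so that the $O(r^{-2})$ error is swallowed by this lower bound produces the claimed implication. The main technical obstacle is precisely this last uniformity: one must verify that the small $\theta$-intervals around the $\theta_i$, chosen once and for all, stay away from the moving zeros of $\cos((d+2)\rho(r,\cdot))$ for all $r\ge R+\varepsilon$; this is exactly what the estimates on $\theta_i^d(r)$ in Lemma \ref{lem:bound_theta_std} and the construction of $\rho$ were engineered to provide. The even case is verbatim with $d+2$ replaced by $d/2+1$ and $r^{-3}$ by $r^{-2}$.
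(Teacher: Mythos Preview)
Your argument is correct and follows essentially the same route as the paper: both reduce to matching the sign of $\varphi_{f_d}^{\rho}$ with that of $\partial_r\varphi_{f_d}^{\rho}$ via the antisymmetry under $\theta\mapsto\theta+\pi$, then use the leading-order expansion to show both are governed by $c_0\cos((d+2)\rho(r,\theta))$, with the cross term from $\partial_r\rho$ suppressed by the built-in $r^{-3}$ (resp.\ $r^{-2}$) decay. The paper writes out the series for $\partial_r\varphi_{f_d}^{\rho}$ explicitly rather than packaging it via the chain rule, and it obtains the uniform lower bound on $|\cos((d+2)\rho)|$ by first bounding $|\varphi_{f_d}^{\rho}|/(\sqrt{a_d}r^{d+2})$ away from zero on $[R_0,\infty)\times K$ and then invoking \eqref{eqn:ineqn_odd}; your direct appeal to the convergence of $\theta_i^d(r)$ from Lemma~\ref{lem:bound_theta_std} achieves the same end. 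One small phrasing point: the intervals $I_i$ are given in the hypothesis rather than chosen by you, so the lower bound $c$ (and hence $R$) depends on them---but your argument accommodates this without change.
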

    \begin{proof}
        Again, we only consider the odd case. Note that
        $$
        \varphi_{f_d}^{\rho}(r,\theta)-\varphi_{f_d}^{\rho}(r,\theta+\pi)=2\varphi_{f_d}^{\rho}(r,\theta).
        $$
        Thus, we need to show that
        $$
        \varphi_{f_d}^{\rho}(r,\theta)>0\,(<0)\Longrightarrow\pd{\varphi_{f_d}^{\rho}}{r}(r,\theta)>0\,(<0).
        $$
        Let $R_0>0$ be large enough so that we can find solutions $\theta_i^d(r)$, $r>R_0$ as in Lemma \ref{lem:solutions}. For each small open interval $I_i$ of $\theta_i$, we have
        $$
        \frac{1}{\sqrt{a_d}r^{d+2}}\varphi_{f_d}^{\rho}(r,\theta)\neq 0
        $$
        for all $r>R_0$ and $\theta\in K$, where
        $$
        K:=[0,2\pi]\Big\backslash\bigcup_{i=1}^{d+2}I_i.
        $$
        Thus there exists $M_{R_0}>0$, independent of $a_d$, such that
        $$
        \left|\frac{1}{\sqrt{a_d}}\varphi_{f_d}^{\rho}(r,\theta)\right|\geq r^{d+2}M_{R_0},
        $$
        for all $(r,\theta)\in [R_0,\infty)\times K$. Combining this with \eqref{eqn:ineqn_odd}, we derive
        $$
        |\cos((d+2)\rho(r,\theta))|\geq M_{R_0}-C_{R_0}r^{-2},
        $$
        which is strictly positive for all $r>R$ if $R>R_0$ is chosen to be large enough. We have
        \begin{align*}
            \frac{1}{\sqrt{a_d}}\pd{\varphi_{f_d}^{\rho}}{r}(r,\theta)=&\,\sum_{i\geq 0}c_i(d+2-2i)r^{d+1-2i}\cos((d+2-2i)\rho(r,\theta))\\
            &-\sum_{i\geq 0}c_i(d+2-2i)r^{d+2-2i}\sin((d+2-2i)\rho(r,\theta))\pd{\rho}{r}.
        \end{align*}
        Recall that $\pd{\rho}{r}=O(r^{-3})$, we then have
        $$
        \left|\frac{1}{\sqrt{a_d}}\pd{\varphi_{f_d}^{\rho}}{r}(r,\theta)\right|\geq M_{R_0}'r^{d+1}-M_{R_0}''r^{d-1},
        $$
        for some constants $M_{R_0}',M_{R_0}''>0$. We can therefore take an even larger $R>R_0$ to conclude the right hand
         side is positive for all $r>R$. The even case is similar and we just present the result that we obtain:
        $$
        \left|\frac{1}{\sqrt{a_d}}\pd{}{r}(\varphi_{f_d}^{\rho,+}(r,\theta)-\varphi_{f_d}^{\rho,-}(r,\theta))\right|\geq M_{R_0}'r^{\frac{1}{2}+1}-M_{R_0}''r^{\frac{1}{2}},
        $$
        for some constants $M_{R_0}',M_{R_0}''>0$, independent of $a_d$.
    \end{proof}  
    Now, let $\chi_{R,\varepsilon}:[0,\infty)\to\bb{R}$ be a smooth function such that
	$$\chi_{R,\varepsilon}|_{[0,R+\varepsilon]}\equiv 0,\,\chi_{R,\varepsilon}|_{[R+1,\infty)}\equiv 1,\,\chi_{R,\varepsilon}'\geq 0.$$
	Define $\varphi_{\bb{L}^{\trop}}:C_{\geq R-\varepsilon}\to\bb{R}$ by
	$$\varphi_{\bb{L}^{\trop}}(r,\theta):=\chi_{R,\varepsilon}(r)\varphi_{\geq R}+(1-\chi_{R,\varepsilon}(r))\varphi_{f_d}^{\rho}(r,\theta).$$
	It follows from the choice of $\chi_{R,\varepsilon}$ that
	$$\varphi_{\bb{L}^{\trop}}=\begin{dcases}
	\varphi_{\geq R} & \text{ if }r\geq R+1,\\
	\varphi_{f_d}^{\rho} & \text{ if }r\in[R-\varepsilon,R+\varepsilon].
	\end{dcases}$$
	Then we obtain a Lagrangian multi-section (see Figure \ref{fig:glue} again)
	$$\alpha(\bb{L}_{\varphi_{\bb{L}^{\trop}}}):=\begin{dcases}
    \bb{L}_{\varphi_{\geq R}} & \text{ over }p_{std}(C_{\geq R+1}),\\
	\{((p_{std}^*)^{-1}(d\varphi_{\bb{L}^{\trop}}(l)),p_{std}(l))\in M_{\bb{R}}\times N_{\bb{R}}\backslash D_R:l\in C_{[R+\varepsilon,R+1]}\} & \text{ over }p_{std}(C_{[R+\varepsilon,R+1]}),\\
	L_{f_d}^{\rho} & \text{ over } D_{R+\varepsilon}
	\end{dcases}$$
	that satisfies the asymptotic condition $\bb{L}_{\varphi_{\bb{L}^{\trop}}}^{\infty}\subset\Lambda_{\bb{L}^{\trop}}^{\infty}$. We now prove that by choosing $a_d>0$ small enough, the immersed sectors of $\bb{L}_{\varphi_{\bb{L}^{\trop}}}$ are exactly given by the roots of $f_d$ that have multiplicity 2. This will follow once we show that $\bb{L}_{\varphi_{\bb{L}^{\trop}}}$ is embedded on the annulus $C_{[R+\varepsilon,R+1]}$.
    \begin{figure}[H]
		\centering
		\includegraphics[width=150mm]{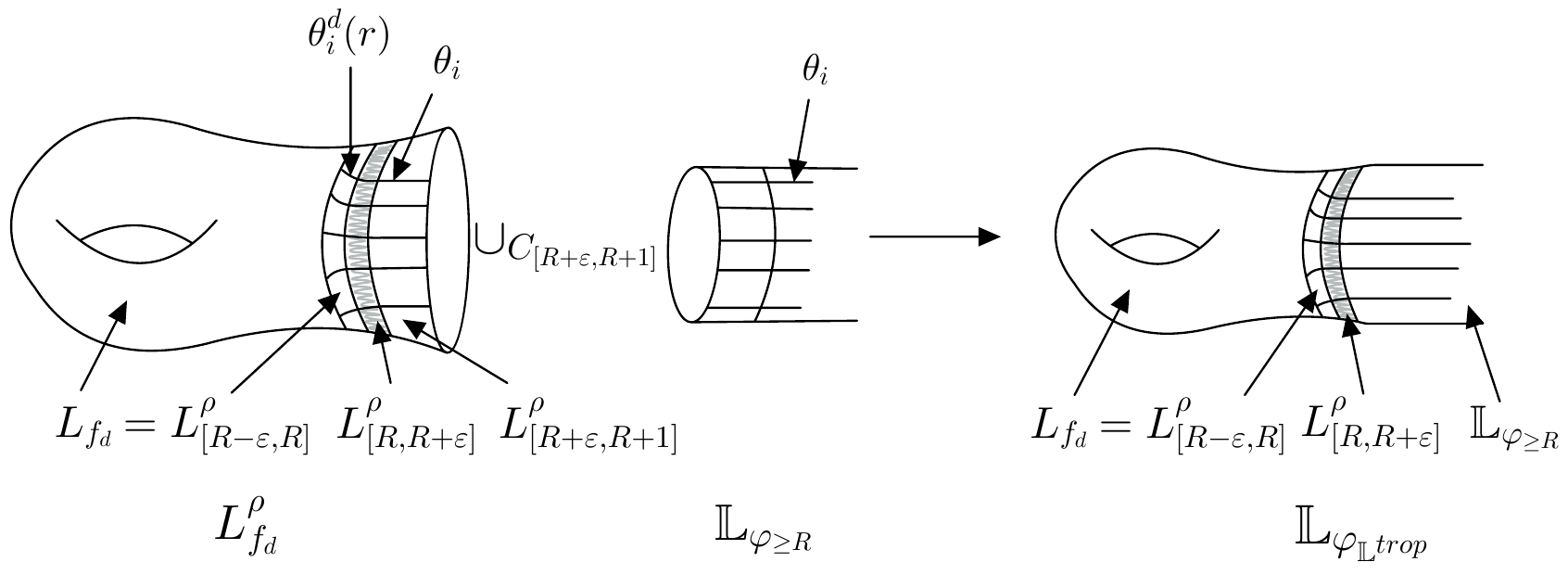}
		\caption{We glue $\bb{L}_{\varphi_{\geq R}}$ and $L_{f_d}^{\rho}$ along the cylinder $C_{[R+\varepsilon,R+1]}$ by shrinking $a_d>0$ to ensure embeddedness on $C_{[R+\varepsilon,R+1]}$.}
		\label{fig:glue}
	\end{figure}

\subsection{Control of embeddedness of the gluing annulus}

	Fix $R>0$ so that Lemma \ref{lem:solutions} and Lemma \ref{lem:ineq_relation_varphi_std} hold. Keep in mind that everything so far is independent of $a_d$. Let's consider the odd case. Embeddedness fails if and only if there exists $(r,\theta)\in C_{[R+\varepsilon,R+1]}$ such that
	$$
	\pd{\varphi_{\bb{L}^{\trop}}}{r}(r,\theta)=\pd{\varphi_{\bb{L}^{\trop}}}{r}(r,\theta+\pi)\text{ and }\pd{\varphi_{\bb{L}^{\trop}}}{\theta}(r,\theta)=\pd{\varphi_{\bb{L}^{\trop}}}{\theta}(r,\theta+\pi).
	$$
	The angular equation can be written as
	\begin{equation}\label{eqn:angular}
		\chi_{R,\varepsilon}(r)\left(\pd{\varphi_{\geq R}}{\theta}(r,\theta)-\pd{\varphi_{\geq R}}{\theta}(r,\theta+\pi)\right)=(1-\chi_{R,\varepsilon}(r))\left(\pd{\varphi_{f_d}^{\rho}}{\theta}(r,\theta+\pi)-\pd{\varphi_{std }^{\rho}}{\theta}(r,\theta)\right).
	\end{equation}
	For the angles $\theta_i$'s so that $\varphi_{\geq R}(r,\theta_i)=\varphi_{\geq R}(r,\theta_i+\pi)$, let $I_i$ be a small open interval of $\theta_i$ so that the inequality relation \eqref{eqn:angular_same_sign} holds. Then the left-handed side of \eqref{eqn:angular} always has the opposite sign to the right-handed side on $[R+\varepsilon,R+1]\times I_i$ for all $i=1,\dots,d+2$, and so \eqref{eqn:angular} fails to hold on $[R+\varepsilon,R+1]\times\bigcup_{i=1}^{d+2}I_i$. Let
	$$
	K:=[0,2\pi]\Big\backslash\bigcup_{i=0}^dI_i,
	$$
	which is of course compact. Note that
	$$|\varphi_{\geq R}(r,\theta)-\varphi_{\geq R}(r,\theta+\pi)|>0$$
	on the compact subset $[R+\varepsilon,R+1]\times K$ and hence receiving a positive lower bound. As $R$ and $\varepsilon$ are independent of $a_d$, we can then choose $a_d>0$ small enough so that
	\begin{equation}\label{eqn:large_a_d}
	    |\varphi_{f_d}^{\rho}(r,\theta)-\varphi_{f_d}^{\rho}(r,\theta+\pi)|\leq|\varphi_{\geq R}(r,\theta)-\varphi_{\geq R}(r,\theta+\pi)|,
	\end{equation}
	for all $(r,\theta)\in [R+\varepsilon,R+1]\times K$. Now, the radial equation can be written as
	\begin{align}\label{eq:radial-eq}
		&\chi_{R,\varepsilon}(r)\left(\pd{\varphi_{\geq R}}{r}(r,\theta)-\pd{\varphi_{\geq R}}{r}(r,\theta+\pi)\right)+(1-\chi_{R,\varepsilon}(r))\left(\pd{\varphi_{f_d}^{\rho}}{r}(r,\theta)-\pd{\varphi_{f_d}^{\rho}}{r}(r,\theta+\pi)\right) \nonumber\\
		=&\,\chi_{R,\varepsilon}'(r)\left(\varphi_{\geq R}(r,\theta+\pi)-\varphi_{\geq R}(r,\theta)+\varphi_{f_d}^{\rho}(r,\theta)-\varphi_{f_d}^{\rho}(r,\theta+\pi)\right).
	\end{align}
	Recall the inequality relations \eqref{eqn:same_sign}, Lemma \ref{lem:ineq_relation_varphi_R} and \ref{lem:ineq_relation_varphi_std}. Together with the inequality \eqref{eqn:large_a_d}, it is now straightforward to check that the left-handed side of \eqref{eq:radial-eq} always has the opposite sign to the right-handed side on $[R+\varepsilon,R+1]\times K$. As a whole, we conclude that $\bb{L}_{\varphi_{\bb{L}^{\trop}}}$ must also be embedded on $C_{[R+\varepsilon,R+1]}$. This gives embeddedness of $\bb{L}_{\varphi_{\bb{L}^{\trop}}}$ on $C_{[R+\varepsilon,R+1]}$ in the even case. The same argument works perfectly in the even case and is thus omitted.

    Combining the above, we now conclude our construction of Lagrangian multi-sections which will be mirror
    to toric vector bundles.

	\begin{theorem}\label{thm:unobs_immersed_Lag}
		The Lagrangian immersion $\bb{L}_{\varphi_{\bb{L}^{\trop}}}$ is a spin, graded, tame 2-fold multi-section with $\bb{L}_{\varphi_{\bb{L}^{\trop}}}^{\infty}\subset\Lambda_{\bb{L}^{\trop}}^{\infty}$ and there is a 1-1 correspondence between the immersed double points of $\bb{L}_{\varphi_{\bb{L}^{\trop}}}$ and the roots of $f_d$ with multiplicity 2. The immersed sector of $\bb{L}_{\varphi_{\bb{L}^{\trop}}}$ is concentrated at degree 1. In particular, $\bb{L}_{\varphi_{\bb{L}^{\trop}}}$ is tautologically unobstructed.
	\end{theorem}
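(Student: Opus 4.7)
My plan is to verify each claimed property of $\bb{L}_{\varphi_{\bb{L}^{\trop}}}$ in turn, drawing on the construction just completed. First I will dispose of the structural claims. The $2$-fold multi-section property is immediate from the construction: on the outer and annular pieces the projection to $N_\bR$ factors through $p_{std}$, a $2$-fold branched cover, and on the inner piece the projection $L_{f_d}^{\rho}\to N_\bR$ is the hyperelliptic branched cover. Since $\bb{L}_{\varphi_{\bb{L}^{\trop}}}$ coincides with $\bb{L}_{\varphi_{\geq R}}$ for $r\geq R+1$, the asymptotic condition $\bb{L}_{\varphi_{\bb{L}^{\trop}}}^{\infty}\subset\Lambda_{\bb{L}^{\trop}}^{\infty}$ is inherited directly from Lemma \ref{lem:infinity_part}. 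For the bijection between immersed double points and multiplicity-$2$ roots of $f_d$, I combine three inputs already established: the outer part is embedded by Lemma \ref{lem:infinity_part}, the annular gluing is embedded by the $a_d$-shrinking argument preceding the theorem, and on the inner disk $L_{f_d}^{\rho}$ agrees with $L_{f_d}$ (since $\rho$ acts as the identity near the cylindrical boundary), whose non-embedded locus is exactly the set of transverse ordinary double points lying over the multiplicity-$2$ roots of $f_d$.

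Next I address the brane data. The spin structure is automatic on an orientable surface. The grading is constructed as follows: on the inner part $L_{f_d}$, a holomorphic curve for the hyper-K\"ahler rotated complex structure $z_1=\xi,\; z_2=x$, a short direct computation of $\iota^{*}\Omega$ on a canonical tangent frame shows the phase is a constant (equal to $0$ with the canonical orientation). The outer and annular pieces are Lagrangians locally expressed as graphs of closed one-forms, hence admit gradings determined up to an additive constant; by smoothness of the gluing and connectedness of each component of $\widetilde L$, these constants are uniquely fixed by matching with the grading of $L_{f_d}$ along the annular overlap. Tameness in the sense of \cite{NZ} follows because along each ray of $\Sigma_L$, the Lagrangian tends at a controlled rate to a ray in $\Lambda_{\bb{L}^{\trop}}^{\infty}$ indexed by the integer covector $m(\tau') \in M$.

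The main obstacle is computing the degree of the immersed sector. At each multiplicity-$2$ root $\xi_{0}$ of $f_d$, I write $f_{d}(\xi)=c^{2}(\xi-\xi_{0})^{2}+O((\xi-\xi_{0})^{3})$ with $c>0$; locally the curve $L_{f_d}$ is the transverse union of the two complex lines $\{x=\pm c(\xi-\xi_{0})\}$ in the $(\xi,x)$-plane, where $x=x_{1}-\sqrt{-1}\,x_{2}$. Taking the unitary frame $\hat v_{1}=e^{\sqrt{-1}\arctan c}\,(1,0)$, $\hat v_{2}=e^{-\sqrt{-1}\arctan c}\,(0,1)$ puts $T_p L_1$ into the standard form $\bR\hat v_{1}\oplus\bR\hat v_{2}$, and $T_p L_2$ into $\bR\bigl(e^{\sqrt{-1}(\pi-2\arctan c)}\hat v_{1}\bigr)\oplus\bR\bigl(e^{\sqrt{-1}(2\arctan c)}\hat v_{2}\bigr)$, yielding angles of intersection $\theta_{1}=\pi-2\arctan c$ and $\theta_{2}=2\arctan c$ with $\theta_{1}+\theta_{2}=\pi$. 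Combined with the equality of the two branch phases (from the previous paragraph), the degree formula of Section \ref{sec:Fuk_microloca} gives $\deg_{L_{1},L_{2}}(p)=1$; by symmetry the opposite degree is also $1$, so the immersed sector is concentrated at degree $1$.

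With $n=2$ and the immersed sector in degree $1$, tautological unobstructedness is then an immediate application of Corollary \ref{cor:immersed_unobs}. The main technical content lies in the angle computation at the node, which is greatly simplified by the observation that both branches of $L_{f_d}$ remain holomorphic for the hyper-K\"ahler rotated complex structure; the matching phases then force $\theta_{1}+\theta_{2}=\pi$ automatically.
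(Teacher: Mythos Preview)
Your proof is correct and follows essentially the same route as the paper's: the structural claims (spin, graded, tame, asymptotics, enumeration of double points) are handled by the same inputs from the construction, the degree-$1$ computation at each node rests on the same observation that both local branches are holomorphic for the hyper-K\"ahler rotated complex structure and hence carry the same constant phase, and tautological unobstructedness is then Corollary \ref{cor:immersed_unobs}. Your unitary-frame computation is a cleaner variant of the paper's explicit matrix calculation; in fact your closing remark that ``matching phases force $\theta_1+\theta_2=\pi$ automatically'' is the most efficient argument, since $\deg_{L_1,L_2}(p)=(\theta_1+\theta_2)/\pi$ is an integer in $(0,2)$ once $\theta_{L_1}(p)=\theta_{L_2}(p)$, so the explicit angles $\pi-2\arctan c$ and $2\arctan c$ are not even needed (and this also covers the case where the quadratic coefficient of $f_d$ at a double root is negative, which your assumption $c>0$ real does not directly address). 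For the grading, the paper argues slightly more topologically---the phase class in $H^1(\til L;U(1))$ vanishes because $H_1(\til L;\bZ)$ is generated by cycles in the compact part $L_{f_d}$ where the phase is constant---while you build the grading piece by piece; the underlying input is identical.
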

	\begin{proof}
		Being spin follows from the vanishing of $H^2(\til{L};\bb{Z}/2\bb{Z})$. For being graded, note that the local model $L_{f_d}$ is a holomorphic curve, so ${\rm{Im}}(\Omega|_{L_{f_d}})={\rm{vol}}_{L_{f_d}}$. Hence the phase class $[\theta]\in H^1(\til{L};U(1))$ vanishes on the compact part of $\til{L}$. Since $H_1(\til{L};\bb{Z})$ is generated by the cycles of the compact part, we have $\int_{\gamma}\theta=0$ for all $\gamma\in H_1(\til{L};\bb{Z})$. This implies $[\theta]=0$ and so $\bb{L}_{\varphi_{\bb{L}^{\trop}}}$ can be graded. Tameness follows from the fact that the closure $\ol{\iota(\bb{L}_{\varphi_{\bb{L}^{\trop}}})}\subset D^*M_{\bb{R}}$ is diffeomorphic to $\bb{L}_{\varphi_{\bb{L}^{\trop}}}^{\infty}\times(r,\infty]$ around the infinity $S^*M_{\bb{R}}$. Then the argument provided in the proof of Lemma 5.4.5 in \cite{NZ} gives tameness.
		
		The conclusion of unobstructedness follows from Corollary \ref{cor:immersed_unobs}.
		 It remains to show that the immersed sector of $\bb{L}_{\varphi_{\bb{L}^{\trop}}}$ is concentrated at degree 1. At an immersed point $p\in L$, it has the local model
		$$L_+\cup L_-=\{x=\xi\cdot g(\xi)\}\cup\{x=-\xi\cdot g(\xi)\}$$
		for some germ of non-vanishing holomorphic function $g$ at $\xi=0$ with real Taylor coefficients. Now, as $L_{\pm}$ are special Lagrangian submanifolds obtained by taking a hyper-K\"ahler rotation of holomorphic curves, 
they can be equipped with grading 0. Recall that $x=x_1-\sqrt{-1}x_2$ and $\xi=\xi_1+\sqrt{-1}\xi_2$, so
		\begin{align*}
			L_+=\{(\xi_1,\xi_2,\xi_1{\rm{Re}}(g)-\xi_2{\rm{Im}}(g),-\xi_2{\rm{Re}}(g)-\xi_1{\rm{Im}}(g))\in N_{\bb{R}}\times M_{\bb{R}}:(\xi_1,\xi_2)\in N_{\bb{R}}\},\\
			L_-=\{(\xi_1,\xi_2,-\xi_1{\rm{Re}}(g)+\xi_2{\rm{Im}}(g),\xi_2{\rm{Re}}(g)+\xi_1{\rm{Im}}(g))\in N_{\bb{R}}\times M_{\bb{R}}:(\xi_1,\xi_2) \in N_{\bb{R}}\}.
		\end{align*}
		Their tangent spaces at $(x,\xi)=0$ are given by
		\begin{align*}
			T_pL_+=\Span_{\bb{R}}\left\{\pd{}{\xi_1}+g(0)\pd{}{x_1},\pd{}{\xi_2}-g(0)\pd{}{x_2}\right\},\\
			T_pL_-=\Span_{\bb{R}}\left\{\pd{}{\xi_1}-g(0)\pd{}{x_1},\pd{}{\xi_2}+g(0)\pd{}{x_2}\right\}.
		\end{align*}
		In terms of standard complex coordinates $z_i:=\xi_i+\sqrt{-1}x_i$, we have
		\begin{align*}
			T_pL_+=\Span_{\bb{R}}\left\{
			\begin{pmatrix}
				1+\sqrt{-1}g(0)\\
				0
			\end{pmatrix},
			\begin{pmatrix}
				0\\
				1-\sqrt{-1}g(0)
			\end{pmatrix}\right\},\\
			T_pL_-=\Span_{\bb{R}}\left\{
			\begin{pmatrix}
				1-\sqrt{-1}g(0)\\
				0
			\end{pmatrix},
			\begin{pmatrix}
				0\\
				1+\sqrt{-1}g(0)
			\end{pmatrix}\right\}.
		\end{align*}
		Then the unitary matrix
		$$\frac{1}{\sqrt{1+g(0)^2}}\begin{pmatrix}
		0 & 1+\sqrt{-1}g(0)\\
		1-\sqrt{-1}g(0) & 0
		\end{pmatrix}$$
		carries $T_pL_+$ to $\bb{R}^2\subset\bb{C}$ and
		\begin{align*}
			\begin{pmatrix}
				1-\sqrt{-1}g(0)\\
				0
			\end{pmatrix}\mapsto\frac{1}{\sqrt{1+g(0)^2}}\begin{pmatrix}
				0\\
				1-g(0)^2-2\sqrt{-1}g(0)
			\end{pmatrix},\\
			\begin{pmatrix}
				0\\
				1+\sqrt{-1}g(0)
			\end{pmatrix}\mapsto\frac{1}{\sqrt{1+g(0)^2}}\begin{pmatrix}
				0\\
				1-g(0)^2+2\sqrt{-1}g(0)
			\end{pmatrix}.
		\end{align*}
		Hence the sum of the angles of intersection equals $\pi$ and this proves $\deg(p)=1$.
	\end{proof}
	
	Now, as $\bb{L}_{\varphi_{\bb{L}^{\trop}}}$ is a graded Lagrangian multi-section, we can give it the canonical grading. By Corollary \ref{cor:MS_gives_bundle}, its mirror is a rank 2 toric vector bundle $\cu{E}_{\bb{L}_{\varphi_{\bb{L}^{\trop}}}}$. The corresponding associated tropical Lagrangian multi-section is $\bb{L}^{\trop}$. Indeed, for any $\sigma\in\Sigma(2)$, we have
	$$(\cu{E}_{\bb{L}_{\varphi_{\bb{L}^{\trop}}}}|_{X_{\sigma}})_m\simeq\mu_{m,-\sigma}(\kappa(\cu{E}_{\bb{L}_{\varphi_{\bb{L}^{\trop}}}}))=\mu_{m,-\sigma}(\cu{F}_{\bb{L}_{\varphi_{\bb{L}^{\trop}}}})\simeq Hom_{\cu{F}uk_{\bb{K}}^0(Y)}(D_{m,-\sigma}[-n],\bb{L}_{\varphi_{\bb{L}^{\trop}}}).$$
	By the asymptotic condition $\bb{L}_{\varphi_{\bb{L}^{\trop}}}^{\infty}\subset\Lambda_{\bb{L}^{\trop}}^{\infty}$, the right hand side is non-zero if and only if $m=d\varphi^{\trop}|_{\Int(\sigma')}$ for some lift $\sigma'\in\Sigma_L(2)$ of $\sigma$. Hence $\bb{L}_{\cu{E}_{\bb{L}_{\varphi_{\bb{L}^{\trop}}}}}^{\trop}=\bb{L}^{\trop}$. Combining with Corollary \ref{cor:MS_gives_bundle} and Proposition \ref{prop:N>2}, we obtain the following beautiful corollary.

    \begin{corollary}
        When $N$ is odd, an $N$-generic tropical Lagrangian multi-section can be realized by an embedded Lagrangian multi-section if and only if $N\geq 3$.
    \end{corollary}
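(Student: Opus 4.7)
The statement is an ``if and only if'' and I would treat the two directions separately. For the ``if'' direction, assume $N \geq 3$ is odd and write $N = 2g+3$ with $g \geq 0$, so that the hyper-elliptic local model uses a polynomial $f_d$ of odd degree $d = 2g+1$. Theorem \ref{thm:unobs_immersed_Lag} already produces a 2-fold tautologically unobstructed Lagrangian multi-section $\bb{L}_{\varphi_{\bb{L}^{\trop}}}$ with $\bb{L}_{\varphi_{\bb{L}^{\trop}}}^{\infty} \subset \Lambda_{\bb{L}^{\trop}}^{\infty}$ whose immersed double points are in bijection with the multiplicity-$2$ roots of $f_d$. I would simply pick $f_d(\xi) = a_d(\xi^{d} + a_{d-1}\xi^{d-1} + \cdots + a_1 \xi)$ so that all its roots are simple---this is generic in the allowed parameter space of real polynomials of this shape with $a_d > 0$---after which the same construction yields an \emph{embedded} realization of $\bb{L}^{\trop}$.

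For the ``only if'' direction, since $N$ is odd, the only case to rule out is $N = 1$. Suppose by contradiction that $\bb{L}$ is an embedded Lagrangian multi-section with $\bb{L}^{\infty} \subset \Lambda_{\bb{L}^{\trop}}^{\infty}$. Then $\bb{L}$ is $\Lambda_{\Sigma}$-admissible since $\Lambda_{\bb{L}^{\trop}} \subset \Lambda_{\Sigma}$, and tautologically unobstructed by Theorem \ref{thm:unobs}, so it defines an object of $\cu{F}uk_{\bb{K}}^0(Y, \Lambda_{\Sigma})$. Equip $\bb{L}$ with its canonical grading. By Corollary \ref{cor:MS_gives_bundle} the mirror $\cu{E}_{\bb{L}}$ is a toric vector bundle on $X_{\Sigma}$, of rank $2$ by the covering-degree remark following that corollary. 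Finally, repeating verbatim the computation already carried out just after Theorem \ref{thm:unobs_immersed_Lag}---using Theorem \ref{thm:microlocalstalk} to identify $(\cu{E}_{\bb{L}}|_{X_{\sigma}})_m$ with $\Hom_{\cu{F}uk_{\bb{K}}^0(Y)}(D_{m,-\sigma}[-n], \bb{L})$, and then reading off the allowed $m$ from the asymptotic condition---yields $\bb{L}_{\cu{E}_{\bb{L}}}^{\trop} = \bb{L}^{\trop}$. Thus $\bb{L}^{\trop}$ is realized by a rank $2$ toric vector bundle, contradicting Proposition \ref{prop:N>2} since $N = 1 < 3$.

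There is essentially no obstacle: the corollary is a short assembly of Theorem \ref{thm:unobs_immersed_Lag}, Corollary \ref{cor:MS_gives_bundle}, and Proposition \ref{prop:N>2}. The only genuinely new ingredient is the elementary observation that allowing $f_d$ to have only simple roots eliminates the immersed locus entirely in the forward construction, turning the construction of Theorem \ref{thm:unobs_immersed_Lag} into an embedded realization. The subtlety, if any, is merely checking that the choice of simple-root $f_d$ is compatible with the rest of the construction (positivity of the leading coefficient and the gluing estimates in Lemma \ref{lem:ineq_relation_varphi_std}), but this is immediate since those estimates are controlled by constants independent of $a_d$ and by the angular genericity of $\varphi^{\trop}$, which is unaffected by the multiplicity structure of the roots.
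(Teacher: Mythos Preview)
Your proposal is correct and follows exactly the paper's approach: the paper states the corollary immediately after the paragraph verifying $\bb{L}_{\cu{E}_{\bb{L}_{\varphi_{\bb{L}^{\trop}}}}}^{\trop}=\bb{L}^{\trop}$ and simply says it follows by combining Corollary \ref{cor:MS_gives_bundle} and Proposition \ref{prop:N>2}, which is precisely the assembly you spell out. Your only addition is the explicit (and correct) remark that choosing $f_d$ with simple roots in Theorem \ref{thm:unobs_immersed_Lag} eliminates the immersed sector entirely, making the realization embedded.
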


    Applying equivariant HMS to the Lagrangian multi-sections obtained in Theorem \ref{thm:unobs_immersed_Lag}, we obtain a toric-algebro-geometric result that is not known (at least to the authors) before. 
	
	\begin{corollary}\label{cor:existence_of_bundle}
		Suppose $\bb{L}^{\trop}$ is a $N$-generic 2-fold tropical Lagrangian multi-section over a complete fan $\Sigma$ on $N_{\bb{R}}\cong\bb{R}^2$ with $N\geq 3$. Then there is an indecomposable rank 2 toric vector bundle $\cu{E}$ on $X_{\Sigma}$ such that
		\begin{align*}
			\dim_{\bb{K}}\Ext_T^0(\cu{E},\cu{E})=&\,1,\\
			\dim_{\bb{K}}\Ext_T^1(\cu{E},\cu{E})=&\,N-3,\\
			\dim_{\bb{K}}\Ext_T^2(\cu{E},\cu{E})=&\,0,
		\end{align*}
		and $\bb{L}_{\cu{E}}^{\trop}=\bb{L}^{\trop}$.
	\end{corollary}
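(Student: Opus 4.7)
The plan is to deduce the corollary by feeding the Lagrangian multi-section produced in Theorem \ref{thm:unobs_immersed_Lag} into equivariant HMS (Corollary \ref{cor:eqivariant_HMS}), Theorem \ref{thm:unobs}, and Corollary \ref{cor:MS_gives_bundle}. First I would apply Theorem \ref{thm:unobs_immersed_Lag} to $\bb{L}^{\trop}$, choosing the defining polynomial $f_d$ of the hyper-elliptic local model to have only simple roots. By the correspondence between immersed double points of $\bb{L}_{\varphi_{\bb{L}^{\trop}}}$ and the multiplicity-2 roots of $f_d$ stated in Theorem \ref{thm:unobs_immersed_Lag}, such a choice produces a spin, graded, \emph{embedded} 2-fold Lagrangian multi-section $\bb{L}:=\bb{L}_{\varphi_{\bb{L}^{\trop}}}$ with $\bb{L}^{\infty}\subset\Lambda_{\bb{L}^{\trop}}^{\infty}$ whose underlying surface $L$ satisfies $b_0(L)=1,\ b_1(L)=N-3,\ b_2(L)=0$.

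Next I equip $\bb{L}$ with its canonical grading and invoke Corollary \ref{cor:MS_gives_bundle} to conclude that the mirror $\cu{E}:=\cu{E}_{\bb{L}}\in\cu{P}erf_T(X_{\Sigma})$ is quasi-isomorphic to a toric vector bundle. The remark following Corollary \ref{cor:MS_gives_bundle} identifies the rank of $\cu{E}$ with the covering degree of $p_{\bb{L}}:\til{L}\to N_{\bb{R}}$, so $\cu{E}$ has rank 2. To verify $\bb{L}_{\cu{E}}^{\trop}=\bb{L}^{\trop}$, I would combine Theorem \ref{thm:microlocalstalk} with Treumann's microlocal characterization to identify the equivariant weight $(\cu{E}|_{X_{\sigma}})_m$ with $HF^0(D_{m,-\sigma}[-n],\bb{L})$; by the asymptotic condition $\bb{L}^{\infty}\subset\Lambda_{\bb{L}^{\trop}}^{\infty}$, this weight is nonzero precisely for $m=d\varphi^{\trop}|_{\Int(\sigma')}$ for some lift $\sigma'\in\Sigma_L(2)$ of $\sigma$, which exactly recovers the piecewise-linear data of $\bb{L}^{\trop}$.

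To compute the equivariant self-Ext groups, I would use the quasi-equivalence of Corollary \ref{cor:eqivariant_HMS} to obtain $\Ext_T^i(\cu{E},\cu{E})\cong HF^i(\bb{L},\bb{L})$. Since $\bb{L}$ is embedded in dimension 2, Theorem \ref{thm:unobs} applies and gives $HF^{\bullet}(\bb{L},\bb{L})\cong H_{dR}^{\bullet}(L;\bb{K})$. Substituting the Betti numbers recorded in Theorem \ref{thm:unobs_immersed_Lag} yields the claimed dimensions $1$, $N-3$, $0$. Indecomposability of $\cu{E}$ then follows from $\dim_{\bb{K}}\Ext_T^0(\cu{E},\cu{E})=1$: any nontrivial equivariant decomposition $\cu{E}\cong\cu{E}_1\oplus\cu{E}_2$ would furnish a non-scalar idempotent endomorphism, contradicting one-dimensionality of the equivariant endomorphism space.

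The genuine mathematical work is already absorbed into Theorem \ref{thm:unobs_immersed_Lag} (the Lagrangian realization construction) and Theorem \ref{thm:unobs} (tautological unobstructedness in dimension 2); given those, the corollary is essentially a dictionary translation through equivariant HMS. The only delicate step that merits explicit attention is ensuring the freedom to pick $f_d$ with distinct roots so that $\bb{L}$ is genuinely embedded, since Theorem \ref{thm:unobs} (rather than merely Corollary \ref{cor:immersed_unobs}) is needed to equate $HF^{\bullet}(\bb{L},\bb{L})$ with $H_{dR}^{\bullet}(L;\bb{K})$ on the nose without extra immersed-sector contributions.
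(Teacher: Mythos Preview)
Your proposal is correct and follows essentially the same approach the paper intends: the paper presents this corollary without a written proof, merely remarking that it arises by ``applying equivariant HMS to the Lagrangian multi-sections obtained in Theorem \ref{thm:unobs_immersed_Lag}'', and the paragraph immediately preceding the corollary already carries out your identification $\bb{L}_{\cu{E}}^{\trop}=\bb{L}^{\trop}$ via the microlocal stalk computation. You have simply made explicit the chain of implications (choice of simple-rooted $f_d$ for embeddedness, Theorem \ref{thm:unobs} for $HF^{\bullet}\cong H_{dR}^{\bullet}$, Betti numbers from Theorem \ref{thm:unobs_immersed_Lag}, indecomposability from $\dim\Ext_T^0=1$) that the paper leaves to the reader.
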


    \begin{remark}\label{rem:direct_sum}
		When $d$ is even, there is a distinguished degeneration of the hyper-elliptic curve
		$$
		x^2=a_{2g+2}\xi^2(\xi-r_1)^2\cdots(\xi-r_g)^2,
		$$ 
		with distinct $r_i$'s in $\bb{C}^{\times}$ by having a pair of $r_i$'s merge.
        Then $\bb{L}_{\varphi_{\bb{L}^{\trop}}}$ is actually given by a union of two Lagrangian sections (the trivial realization of $\bb{L}^{\trop}$) and is mirror to the direct sum of the line bundles corresponding to the two ``indecomposable" (see \cite{Suen_trop_lag} for the precise notion) components of $\bb{L}^{\trop}$.
	\end{remark}

 \subsection{Higher rank cases}

The higher rank realization problem is now no more difficult than the rank 2 cases. As every tropical Lagrangian multi-section can be decomposed into a union of maximal ones, it suffices to consider a maximal $r$-fold tropical Lagrangian multi-section $\bb{L}^{\trop}$ over a 2-dimensional complete fan $\Sigma$. With this assumption, the underlying $r$-fold covering map is topologically the map $p_{std}:z\mapsto z^r$ on $\bb{C}$. Hence the preimage of the circle $S^1\subset N_{\bb{R}}$ around the branch point under $p$ is a single circle $C\subset L^{\trop}$ and $Deck(C)\cong\bb{Z}/r\bb{Z}$. Parametrizing $C$ by $[0,2\pi)$. Then a deck transformation in $Deck(C)$ can be identified with the map $\theta\mapsto\theta+\frac{2\pi i}{r}\text{ mod }2\pi$ for some $i\in\bb{Z}_{\geq 0}$. In this case, the notion of $N$-genericity is generalized as follows.

\begin{definition}\label{def:N_generic_general_r}
    Let $\bb{L}^{\trop}$ be a maximal $r$-fold tropical Lagrangian multi-section over a complete 2-dimensional fan $\Sigma$. We say $\bb{L}^{\trop}$ is \emph{$N$-generic} if for any distinct deck transformations $\gamma_1,\gamma_2\in Deck(C)$, the graph of $\varphi^{\trop}\circ\gamma_1|_{[0,\frac{2\pi}{r})}$ intersects that of $\varphi^{\trop}\circ\gamma_2|_{[0,\frac{2\pi}{r})}$ transversely at exactly $N$ smooth points.
\end{definition}

We can now give a partial result for higher ranks.

\begin{theorem}\label{thm:LRP_higher_rank}
    A maximal $r$-fold tropical Lagrangian multi-section $\bb{L}^{\trop}$ over a complete 2-dimensional fan $\Sigma$ can be realized by an $r$-fold embedded (and hence unobstructed) Lagrangian multi-section if it is $\left\lfloor 2\left(\frac{d}{r}+1\right)\right\rfloor$-generic for some $d\in\bb{Z}_{>0}$ such that ${\rm{g.c.d.}}(r,d)=1$. In particular, such $\bb{L}^{\trop}$ can be realized by a rank $r$ toric vector bundle over $X_{\Sigma}$.
\end{theorem}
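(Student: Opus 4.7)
My plan is to generalize the construction of $\bb{L}_{\varphi_{\bb{L}^{\trop}}}$ used in the proof of Theorem \ref{thm:unobs_immersed_Lag}, replacing the hyperelliptic local model with the affine curve
$$
L_{f_d}:=\{(x,\xi)\in M_{\bb{R}}\times N_{\bb{R}}:x^r=f_d(\xi)\},
$$
where $f_d(\xi)=a_d\xi^d+\cdots\in \bb{R}[\xi]$ has degree $d$ with roots of multiplicity at most $r$ (so that the resulting multisection after hyper-K\"ahler rotation has only transverse double point singularities at double roots). The condition $\gcd(r,d)=1$ ensures that $p_{N_{\bb{R}}}:L_{f_d}\to N_{\bb{R}}$ is a \emph{maximal} $r$-fold branched covering, matching the given $\bb{L}^{\trop}$ topologically over the cylindrical ends. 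The outer smoothing $\varphi_{\geq R}$ of $\varphi^{\trop}$ along the region $\{\|\xi\|\geq R\}$ is constructed exactly as in Lemma \ref{lem:infinity_part} using cutoff functions around each ray where $\varphi^{\trop}$ fails to be smooth; the maximality of the covering makes the angular coordinate $\theta$ well-defined on a single circle $C\subset L^{\trop}$.

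Next, I would derive the asymptotic formula for a primitive $\varphi_{f_d}$ of $\mathrm{Re}(\sqrt[r]{f_d}\,p_{N_{\bb{R}}}^*d\xi)$, pulled back through a holomorphic parametrization $F:C_{\geq R_0}\to L_{f_d}$ of the cylindrical ends with $p_{N_{\bb{R}}}\circ F=p_{std}$, where $p_{std}(l)=l^r$. The leading term after integration is a constant multiple of $r^{d/r+1}\cos\bigl((d/r+1)\theta\bigr)$, with subleading corrections going as powers of $r$ with strictly smaller exponent (and a possible logarithmic term if $d/r+1$ is an integer, whose coefficient is real by construction). For any two distinct deck transformations $\gamma_1,\gamma_2$ of $C\to S^1$, which differ by rotation by a multiple of $2\pi/r$, the difference $\varphi_{f_d}\circ\gamma_1-\varphi_{f_d}\circ\gamma_2$ has leading angular component a nonzero trigonometric polynomial whose zeros in $[0,2\pi/r)$ number exactly $\lfloor 2(d/r+1)\rfloor=N$. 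This is proved by an intermediate-value-theorem argument exactly as in Lemma \ref{lem:solutions}, using a large-$r$ estimate together with the fact that $\gcd(r,d)=1$ prevents unwanted cancellations among the leading frequencies of the deck orbit.

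Once the $N$ zeros of the local model are matched to the $N$ zeros of $\varphi_{\geq R}$ (for each pair $(\gamma_1,\gamma_2)$) by a diffeomorphism $F_\rho(r,\theta)=(r,\rho(r,\theta))$ chosen so that $\rho$ is the identity on $[R-\varepsilon,R]$, satisfies the natural $2\pi$-periodicity twisted by deck transformations, and has radial derivative decaying like $r^{-(d/r+2)}$, I interpolate linearly via a cutoff $\chi_{R,\varepsilon}$:
$$
\varphi_{\bb{L}^{\trop}}:=\chi_{R,\varepsilon}\,\varphi_{\geq R}+(1-\chi_{R,\varepsilon})\,\varphi_{f_d}\circ F_\rho.
$$
Embeddedness of the resulting $\bb{L}_{\varphi_{\bb{L}^{\trop}}}$ on the gluing annulus $C_{[R+\varepsilon,R+1]}$ is then controlled branch-by-branch: for each ordered pair of deck-translates $(\gamma_1,\gamma_2)$, one sets up the angular and radial equations analogous to \eqref{eq:radial-eq}, applies the sign-matching analogues of Lemma \ref{lem:ineq_relation_varphi_R} and Lemma \ref{lem:ineq_relation_varphi_std}, and chooses $a_d>0$ small enough so that the inequality corresponding to \eqref{eqn:large_a_d} holds simultaneously on $[R+\varepsilon,R+1]\times K$ for all pairs. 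Since there are only $\binom{r}{2}$ such ordered pairs and $a_d$ appears as an overall scaling, a single small choice suffices. Then Corollary \ref{cor:MS_gives_bundle} (applied to the canonical grading) produces a rank $r$ toric vector bundle on $X_{\Sigma}$ with the prescribed tropical data.

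The principal obstacle is the simultaneous sign-matching across all $\binom{r}{2}$ pairs of branches on the gluing annulus: in the rank 2 case there is only a single comparison $\theta\leftrightarrow\theta+\pi$, whereas here one must verify that a single choice of branch (i.e., a single global sign or phase determination of the $r$-th root $\sqrt[r]{f_d}$ on the parametrization $F$) yields the correct sign correlation between $\varphi_{f_d}\circ\gamma_i-\varphi_{f_d}\circ\gamma_j$ and $\varphi_{\geq R}\circ\gamma_i-\varphi_{\geq R}\circ\gamma_j$ for every pair $(i,j)$. This is where the coprimality $\gcd(r,d)=1$ is essential: it guarantees that the cyclic $\bb{Z}/r\bb{Z}$ action on the branches of $\sqrt[r]{f_d}$ induced by monodromy matches the deck action on $C$, so that a single compatible phase choice propagates consistently to all pairs. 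The index computation and sector degree analysis at the immersed points go through verbatim from Theorem \ref{thm:unobs_immersed_Lag}, giving that $\bb{L}_{\varphi_{\bb{L}^{\trop}}}$ is spin, graded, tame and tautologically unobstructed by Corollary \ref{cor:immersed_unobs} (with the immersed sector concentrated in degree $1$, which can be replaced by no immersed sector at all by choosing $f_d$ with only simple roots, making the realization embedded).
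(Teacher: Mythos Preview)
Your proposal is correct and follows essentially the same approach as the paper's own (sketch of) proof: both use the local model $\{x^r=f_d(\xi)\}$ with $\gcd(r,d)=1$ to obtain a maximal $r$-fold branched cover with a single cylindrical end, compute the leading term of the primitive $\varphi_{f_d}$ on that end to count $\lfloor 2(d/r+1)\rfloor$ zeros of each pairwise branch-difference, and then reuse the $\rho$-matching and $\chi_{R,\varepsilon}$-interpolation from the rank~2 construction. Your discussion of the simultaneous sign-matching across all $\binom{r}{2}$ branch pairs is in fact more explicit than the paper, which simply asserts that ``the previous gluing method'' applies; note also that the paper takes $f_d$ with distinct roots from the outset (yielding an embedded model), which is exactly the choice you arrive at in your final sentence.
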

\begin{proof}[Sketch of proof]
The smoothing argument in Lemma \ref{lem:infinity_part} can be used to construct an $r$-fold Lagrangian multi-section $\bb{L}_{\varphi_{\geq R}}$ over $N_{\bb{R}}\backslash D_R$. To glue back the disk $D_R$, we use the embedded local model
$$L_{r,d}:=\{(x,\xi)\in\bb{C}^2:x^r=f_d(\xi)\},$$
where $f_d\in\bb{R}[\xi]$ is a polynomial of degree $d$ with leading coefficient $a_d>0$, $\text{g.c.d.}(r,d)=1$, and distinct roots. The underlying surface is embedded and has one cylindrical end. Projection onto the $\xi$-coordinate is an $r$-fold branched covering map. It is easy to write $L_{r,d}$ as
$$\{((p_{std}^*)^{-1}(d\varphi_{r,d}(l)),p_{std}(l))\in M_{\bb{R}}\times N_{\bb{R}}:l\in C_{\geq R}\},$$
where $\varphi_{\geq R}$ is given in terms of the polar coordinate $(r,\theta)$ on the cylindrical end $C_{\geq R}$ by
$$\varphi_{r,d}(r,\theta)=\sum_{i=0}^{\infty}c_kr^{d+r-ri}\cos((d+r-ri)\theta),$$
for $r\geq R>>1$ and some $c_i\in\bb{R}$ with $c_0>0$. By considering the leading order term of $\varphi_{r,d}$, we note that for any distinct $i,j\in\{0,1,\dots,r-1\}$, the equation
$$\varphi_{r,d}\left(r,\theta+\frac{2\pi i}{r}\right)=\varphi_{r,d}\left(r,\theta+\frac{2\pi j}{r}\right)$$
has $\left\lfloor 2\left(\frac{d}{r}+1\right)\right\rfloor$ solutions on $\left[0,\frac{2\pi}{r}\right)$. Then we can apply the previous glueing method to obtain an embedded $r$-fold Lagrangian multi-section $\bb{L}_{\varphi_{f_d}}$ that satisfies the asymptotic condition $\bb{L}_{\varphi_{f_d}}^{\infty}\subset\Lambda_{\bb{L}^{\trop}}^{\infty}$.
\end{proof}

\begin{remark}
    When $r=2$, we have $\left\lfloor 2\left(\frac{d}{2}+1\right)\right\rfloor=d+2\geq 3$, which is the necessary and sufficient condition for solving the $r=2$ Lagrangian realization problem. On the other hand, we call Theorem \ref{thm:LRP_higher_rank} a partial result because the $\left\lfloor 2\left(\frac{d}{r}+1\right)\right\rfloor$-genericity condition may not be necessary when $r\geq 3$.
\end{remark}

\begin{example}
    Consider the 3-fold tropical Lagrangian multi-section $\bb{L}^{\trop}$ as shown in Figure \ref{fig:trop_lag_rank3}
    \begin{figure}[H]
		\centering
		\includegraphics[width=110mm]{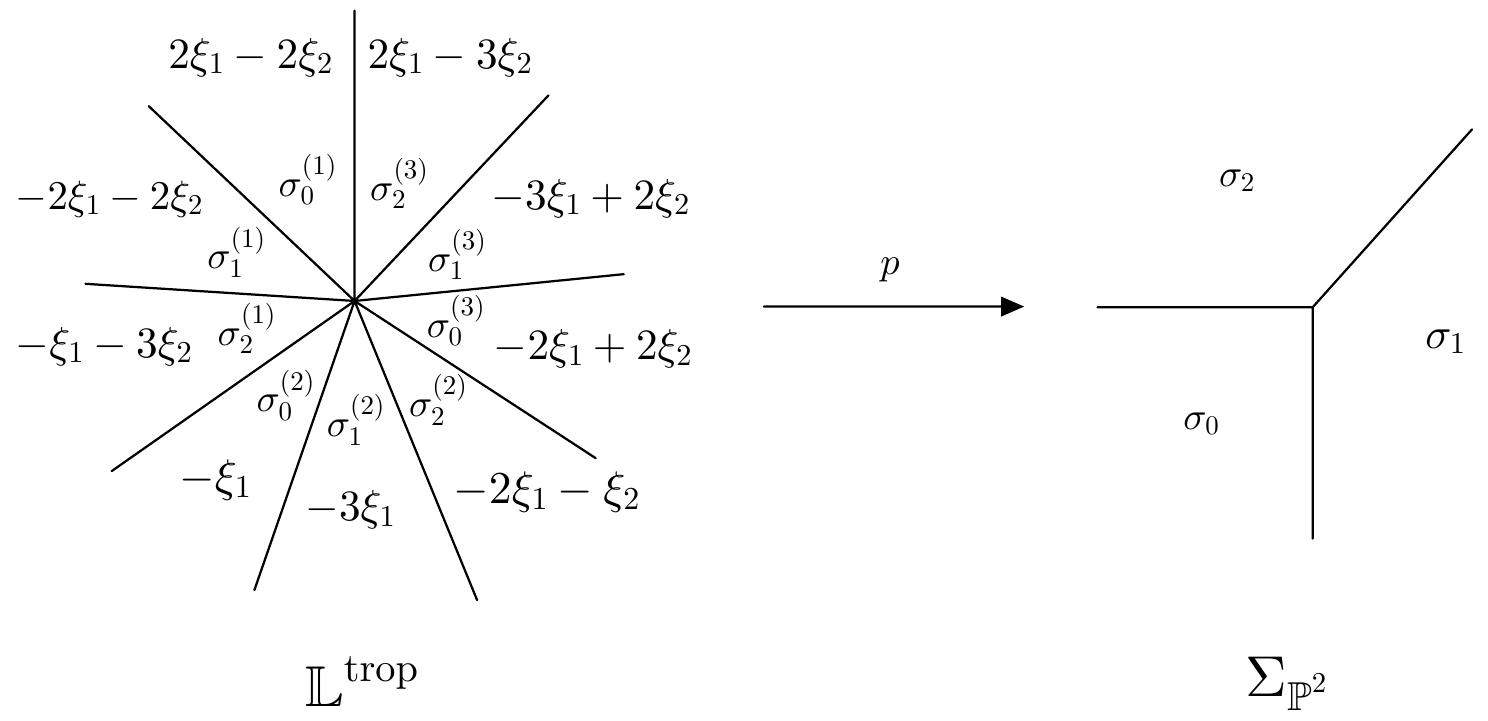}
		\caption{The 3-fold tropical Lagrangian multi-section $\bb{L}^{\trop}$ over $\Sigma_{\bb{P}^2}$.}
		\label{fig:trop_lag_rank3}
	\end{figure}
    One checks that
    \begin{align*}
        m(\sigma_0^{(1)})-m(\sigma_0^{(2)}),\,m(\sigma_0^{(2)})-m(\sigma_0^{(3)}),\,m(\sigma_0^{(1)})-m(\sigma_0^{(3)})\in(\sigma_0\cap\sigma_1)^{\vee}\cap M,\\
        m(\sigma_1^{(2)})-m(\sigma_1^{(1)}),\,m(\sigma_1^{(3)})-m(\sigma_1^{(2)}),\,m(\sigma_1^{(3)})-m(\sigma_1^{(1)})\in(\sigma_1\cap\sigma_2)^{\vee}\cap M,\\
        m(\sigma_2^{(1)})-m(\sigma_2^{(2)}),\,m(\sigma_2^{(2)})-m(\sigma_2^{(3)}),\,m(\sigma_2^{(1)})-m(\sigma_2^{(3)})\in(\sigma_2\cap\sigma_0)^{\vee}\cap M
    \end{align*}
    and none of them equal zero. Thus $\bb{L}^{\trop}$ is 3-generic, that is, $d=2$; hence, it is realizable. The underlying surface has one genus and one cylindrical end. The projection map $p_{N_{\bb{R}}}|_L:L\to N_{\bb{R}}$ is a  branched covering map with two degree 3 branch points, i.e. they locally look like $z\mapsto z^3$.
\end{example}
		
	\section{SYZ mirror to rank 2 toric vector bundles on $\bb{P}^2$}\label{sec:rk2}
	
	We now apply Theorem \ref{thm:unobs_immersed_Lag} to prove that every indecomposable\footnote{The case of decomposable rank 2 toric vector bundle is trivial as its mirror is the union of two Lagrangian sections.} rank 2 toric vector bundle on $\bb{P}^2$ has mirror being a Lagrangian multi-section.
	
	In \cite{Kaneyama_classification}, Kaneyama classified indecomposable rank 2 toric vector bundles on $\bb{P}^2$ by the following exact sequence
	$$0\to\cu{O}\to\cu{O}(aD_0)\oplus\cu{O}(bD_1)\oplus\cu{O}(cD_2)\to E_{a,b,c}\to 0,$$
	for $a,b,c\in\bb{Z}_{>0}$ and the first map is given by $1\mapsto(Z_0^a,Z_1^b,Z_2^c)$, where $(Z_i)$ denote the homogeneous coordinates of $\bb{P}^2$ and $D_i:=\{Z_i=0\}$. Then any indecomposable toric vector bundle on $\bb{P}^2$ is isomorphic to $E_{a,b,c}(D)$ or $E_{a,b,c}^*(D)$, for some integers $a,b,c>0$ and a toric divisor $D$.
	
	Let's focus on the case $D=0$. The associated tropical Lagrangian multi-section 
$$
\bb{L}_{a,b,c}^{\trop}:=(L^{\trop},\Sigma_L,\mu_L,p,\varphi_{a,b,c}^{\trop})
$$ 
of $E_{a,b,c}$ is depicted in Figure \ref{fig:L_abc_trop}.
	\begin{figure}[H]
		\centering
		\includegraphics[width=110mm]{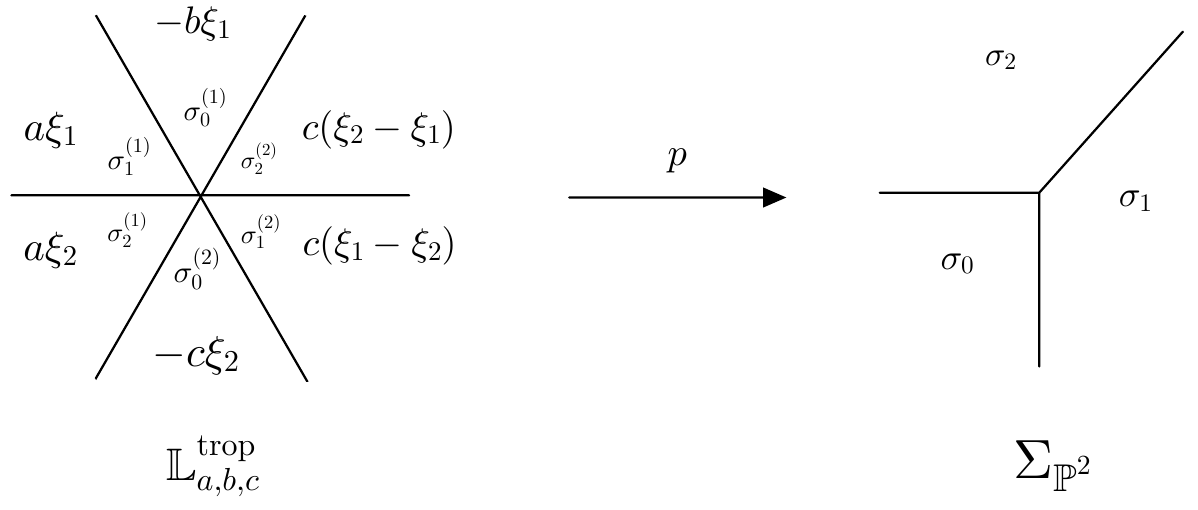}
		\caption{The associated tropical Lagrangian multi-section $\bb{L}_{a,b,c}^{\trop}$ of $E_{a,b,c}$. The domain $L$ is homeomorphic to $\bb{C}$ and the branched covering map $p:L\to N_{\bb{R}}$ can be topologically identified with the square map $z\mapsto z^2$ on $\bb{C}$.}
		\label{fig:L_abc_trop}
	\end{figure}
	It's not hard to see that $\bb{L}_{a,b,c}^{\trop}$ is $3$-generic. It can therefore be realized by a Lagrangian multi-section $\bb{L}_{a,b,c}$. In this case, as $k=1$, we have $d=1$ and $g=0$. Hence $\bb{L}_{a,b,c}$ is diffeomorphic to $\bb{R}^2$. Let $\cu{E}_{\bb{L}_{a,b,c}}$ be the mirror of $\bb{L}_{a,b,c}$. We show that $\cu{E}_{\bb{L}_{a,b,c}}\cong E_{a,b,c}$. The following lemma shows that an indecomposable rank 2 toric vector bundle on $\bb{P}^2$ is equivariantly rigid.
	
	\begin{lemma}\label{lem:equiv_rigid}
		For any $a,b,c\in\bb{Z}_{>0}$ and toric divisor $D$, $E_{a,b,c}(D)$ and $E_{a,b,c}^*(D)$ are equivariantly rigid.
	\end{lemma}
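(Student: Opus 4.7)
My plan is to reduce everything to equivariant line bundle cohomology on $\bb{P}^2$ via the defining exact sequence. Since tensoring with a line bundle induces an auto-equivalence that preserves $\Ext_T^{\bullet}$, it suffices to treat the case $D=0$. Dualising the defining sequence
$$0\to\cu{O}\to\cu{L}\to E_{a,b,c}\to 0,\qquad \cu{L}:=\cu{O}(aD_0)\oplus\cu{O}(bD_1)\oplus\cu{O}(cD_2),$$
yields the short exact sequence
$$0\to E_{a,b,c}^{*}\to\cu{L}^{*}\to\cu{O}\to 0,$$
so $E_{a,b,c}$ and $E_{a,b,c}^{*}$ may be handled by completely parallel arguments. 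I describe the case $E:=E_{a,b,c}$ in detail.

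First, I would apply $\Hom_T(-,E)$ to the defining sequence, obtaining the long exact sequence
$$\cdots\to\Ext_T^{k-1}(\cu{O},E)\to\Ext_T^{k}(E,E)\to\Ext_T^{k}(\cu{L},E)\to\Ext_T^{k}(\cu{O},E)\to\cdots,$$
which reduces the computation of $\Ext_T^{1}(E,E)$ to the equivariant cohomologies $H_T^{\bullet}(\bb{P}^2,E(-\alpha D_i))$ for $(\alpha,i)\in\{(0,*),(a,0),(b,1),(c,2)\}$. Twisting the defining sequence itself by $\cu{O}(-\alpha D_i)$ and taking $H_T^{\bullet}(\bb{P}^2,-)$ then expresses each of these groups in terms of equivariant line bundle cohomologies
$$H_T^{k}\bigl(\bb{P}^2,\cu{O}(\beta_0 D_0+\beta_1 D_1+\beta_2 D_2)\bigr),$$
with the integers $\beta_j$ determined explicitly by $(a,b,c,\alpha,i)$.

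The final step is to evaluate these line bundle cohomologies via the standard toric character decomposition and to verify that in the resulting cascade of long exact sequences, every class that could contribute to $\Ext_T^{1}(E,E)$ is killed. The classical vanishing $H^{1}(\bb{P}^2,\cu{O}(n))=0$ for all $n\in\bb{Z}$ forces many of the connecting maps to be isomorphisms on individual $M$-graded pieces, so that only the scalar automorphism character $0\in M$ survives, giving $\Ext_T^{0}(E,E)\simeq\bb{K}\cdot\mathrm{id}$ and $\Ext_T^{k}(E,E)=0$ for $k\geq 1$. The same argument applied to the dual sequence handles $E_{a,b,c}^{*}$. I expect the main difficulty to be the character-by-character bookkeeping: the nested connecting homomorphisms interact with the $M$-grading via translations by $aD_0$, $bD_1$, $cD_2$, and verifying the cancellations that force $\Ext_T^{1}(E,E)=0$ requires keeping careful track of the polytope of each twist and of which weights actually appear in each $\Hom_T$-term.
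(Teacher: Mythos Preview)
Your plan computes $\Ext_T^1(E,E)=0$ via cohomological bookkeeping, which is a reasonable reading of ``equivariantly rigid'' and would work in principle (chasing the defining sequence through $\Hom_T(-,E)$ and then twisting is a sound strategy, if tedious). However, the paper takes a completely different and much shorter route: it invokes Kaneyama's classification, which says every indecomposable rank~$2$ toric bundle on $\bb{P}^2$ is isomorphic to some $E_{a',b',c'}(D')$ or its dual, and then checks combinatorially that the tropical Lagrangian multi-section $\bb{L}_{a,b,c}^{\trop}(D)$ determines the parameters $(a,b,c,D)$ uniquely. No cohomology is computed at all.

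There is also a subtle mismatch in what the two approaches deliver. The paper's argument proves that any toric vector bundle sharing the tropical data of $E_{a,b,c}(D)$ is actually isomorphic to it --- this is the \emph{global} uniqueness statement used immediately after the lemma to conclude $\cu{E}_{\bb{L}_{a,b,c}}\cong E_{a,b,c}$ from the equality $\bb{L}_{\cu{E}_{\bb{L}_{a,b,c}}}^{\trop}=\bb{L}_{a,b,c}^{\trop}$. Your $\Ext_T^1=0$ is only \emph{infinitesimal} rigidity: it rules out continuous families with the same tropical data, but not a priori a discrete set of non-isomorphic bundles. To close that gap you would still need to appeal to Kaneyama's classification (or some connectedness/finiteness statement for the moduli), at which point the paper's direct combinatorial check is cleaner. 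Note too that the paper already obtains $\Ext_T^1=0$ on the mirror side from Corollary~\ref{cor:existence_of_bundle} (the $N=3$ case gives $N-3=0$), so the lemma is really being invoked for the stronger uniqueness claim rather than for the vanishing of $\Ext_T^1$.
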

	\begin{proof}
		Suppose $E_{a,b,c}(D)$ and $E_{a',b',c'}(D')$ share the same associated tropical Lagrangian multi-section. Then $\bb{L}_{a,b,c}=\bb{L}_{a',b',c'}(D'-D)$. It thus suffices to consider the case $D=0$. Write $D'=k_0D_0+K_1D_1+k_2D_2$. In this case, by comparing the associated tropical Lagrangian section of $E_{a,b,c}$ and $E_{a',b',c'}(D')$, one can easily show that $k_i=0$ for all $i=0,1,2$. Hence we must have $a'=a,b'=b,c'=c$. The same argument applies to $E_{a,b,c}^*$ which has the tropical Lagrangian multi-section $-\bb{L}_{a,b,c}$.
	\end{proof}
	
	As $\bb{L}_{\cu{E}_{\bb{L}_{a,b,c}}}^{\trop}=\bb{L}_{a,b,c}^{\trop}$, we have $\cu{E}_{\bb{L}_{a,b,c}}\cong E_{a,b,c}$ by Lemma \ref{lem:equiv_rigid}. We immediately deduce the following theorem.
	
	\begin{theorem}\label{thm:TGRC_E_abc}
		For $a,b,c\in\bb{Z}_{>0}$, the mirror of $E_{a,b,c}$ is quasi-isomorphic to an embedded, simply connected, and canonically graded Lagrangian multi-section $\bb{L}_{a,b,c}\subset Y$ so that $\bb{L}_{a,b,c}^{\infty}\subset\Lambda_{a,b,c}^{\infty}$.
	\end{theorem}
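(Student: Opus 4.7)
The plan is to construct $\bb{L}_{a,b,c}$ by feeding the tropical Lagrangian multi-section $\bb{L}_{a,b,c}^{\trop}$ of Figure \ref{fig:L_abc_trop} into the realization machinery of Theorem \ref{thm:unobs_immersed_Lag}, and then identify its mirror with $E_{a,b,c}$ via equivariant rigidity. The construction and the identification are in fact already isolated in the paper; the work is to assemble them for the specific Kaneyama family.

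First I would verify that $\bb{L}_{a,b,c}^{\trop}$ is $3$-generic in the sense of Definition \ref{def:N_generic}. Reading off the equivariant structure from the exact sequence $0\to\cu{O}\to\cu{O}(aD_0)\oplus\cu{O}(bD_1)\oplus\cu{O}(cD_2)\to E_{a,b,c}\to 0$ defining $E_{a,b,c}$, the slopes of $\varphi_{a,b,c}^{\trop}$ on its two sheets over each maximal cone of the fan of $\bb{P}^2$ are given by a pair of distinct characters drawn from $\{aD_0,bD_1,cD_2\}$. Since $a,b,c>0$, inspection of Figure \ref{fig:L_abc_trop} shows that the graphs of $\varphi^{\trop}|_{[0,\pi)}$ and $\varphi^{\trop}\circ\gamma|_{[0,\pi)}$ cross transversely at exactly three smooth points, one above the interior of each maximal cone, and never at corners of $\Sigma_L$. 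Hence $\bb{L}_{a,b,c}^{\trop}$ is $3$-generic.

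Applying Theorem \ref{thm:unobs_immersed_Lag} then yields a spin, graded, tautologically unobstructed $2$-fold Lagrangian multi-section $\bb{L}_{a,b,c}$ with $\bb{L}_{a,b,c}^{\infty}\subset\Lambda_{\bb{L}_{a,b,c}^{\trop}}^{\infty}$. For $N=3$ we are in the odd case with $k=1$, $g=0$, and $d=1$, so the hyperelliptic local model is $x^2=a_1\xi$, which has a single simple root and hence no roots of multiplicity $2$. By Theorem \ref{thm:unobs_immersed_Lag}, the immersed sector is empty, so $\bb{L}_{a,b,c}$ is in fact embedded, and the underlying surface is diffeomorphic to $\bb{R}^2$, in particular simply connected. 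Equipping $\bb{L}_{a,b,c}$ with its canonical grading, Corollary \ref{cor:MS_gives_bundle} guarantees that the mirror $\cu{E}_{\bb{L}_{a,b,c}}$ is a toric vector bundle on $\bb{P}^2$.

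It remains to identify $\cu{E}_{\bb{L}_{a,b,c}}$ with $E_{a,b,c}$. Combining Theorem \ref{thm:microlocalstalk}, the proof of Theorem \ref{thm:degree0}, and the asymptotic condition $\bb{L}_{a,b,c}^{\infty}\subset\Lambda_{\bb{L}_{a,b,c}^{\trop}}^{\infty}$, the weights appearing in $\cu{E}_{\bb{L}_{a,b,c}}|_{X_{\sigma}}$ at each torus fixed point $X_{\sigma}$ are precisely the slopes of $\varphi_{a,b,c}^{\trop}$ on the two sheets over $\sigma$, so the associated tropical Lagrangian multi-section $\bb{L}_{\cu{E}_{\bb{L}_{a,b,c}}}^{\trop}$ equals $\bb{L}_{a,b,c}^{\trop}=\bb{L}_{E_{a,b,c}}^{\trop}$. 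Equivariant rigidity (Lemma \ref{lem:equiv_rigid}) forces $\cu{E}_{\bb{L}_{a,b,c}}\cong E_{a,b,c}$, completing the proof. The main obstacle I anticipate is really only the first step: a careful bookkeeping of the slopes in Kaneyama's presentation to confirm the $3$-generic count and transversality at smooth points; once this combinatorial check is performed, the conclusion follows directly from the realization theorem, microlocal stalk computation, and rigidity.
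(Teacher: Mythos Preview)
Your proposal is correct and follows essentially the same route as the paper: verify $3$-genericity of $\bb{L}_{a,b,c}^{\trop}$, apply Theorem \ref{thm:unobs_immersed_Lag} (noting $N=3$ gives $d=1$, $g=0$, hence an embedded $\bb{R}^2$), and then identify the mirror bundle via $\bb{L}_{\cu{E}_{\bb{L}_{a,b,c}}}^{\trop}=\bb{L}_{a,b,c}^{\trop}$ together with Lemma \ref{lem:equiv_rigid}. Your write-up is slightly more explicit than the paper's in spelling out why the hyperelliptic model has no double roots (hence embeddedness) and how the microlocal stalk computation recovers the tropical data, but the argument is the same.
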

	
	Now, for the case $E_{a,b,c}^*$, we have the corresponding conical Lagrangian subset
	$$\Lambda_{-\bb{L}_{a,b,c}^{\trop}}:=\bigcup_{\tau'\in\Sigma_L}-m(\tau')\times-\tau.$$
	It is then easy to see that
	$$-\bb{L}_{a,b,c}:=\{(-(p_{std}^*)^{-1}(d\til{\varphi}_{a,b,c}(l)),-p_{std}(l))\in Y:l\in\bb{R}^2\}$$
	is the mirror Lagrangian multi-section of $E_{a,b,c}^*$, which is also embedded. For a toric divisor $D$, let $\varphi_D:N_{\bb{R}}\to\bb{R}$ be a smoothing of $\varphi_D^{\trop}$ that asymptotic to the slopes of $\varphi_D^{\trop}$, the associated piecewise linear function of $D$. We have
	$$\Lambda_{\bb{L}_{a,b,c}^{\trop}(D)}:=\bigcup_{\tau'\in\Sigma_L}(m(\tau')+m_D(\tau))\times-\tau,$$
	where $m_D(\tau)$ is the slope of $\varphi_D^{\trop}$ along $\tau$. We define
	$$\bb{L}_{a,b,c}(D):=\{((p_{std}^*)^{-1}(d(\til{\varphi}_{a,b,c}+\varphi_D\circ p_{std})(l)),-p_{std}(l))\in Y:l\in\bb{R}^2\},$$
	which is again embedded as one can easily check. As $E_{a,b,c}(D)$ is also equivariantly rigid, meaning that it admits no non-trivial deformations as toric vector bundles, the same argument of the proof of Theorem 4.4 allows us to conclude that $\bb{L}_{a,b,c}(D)$ is mirror to $E_{a,b,c}(D)$. Therefore, our result can be extended to arbitrary rank 2 indecomposable toric vector bundles on $\bb{P}^2$.
	
	\begin{theorem}\label{thm:general_case}
		The mirror of a rank 2 indecomposable toric vector bundle $\cu{E}$ on $\bb{P}^2$ is quasi-isomorphic to an embedded, simply connected, and canonically graded Lagrangian multi-section $\bb{L}_{\cu{E}}\subset Y$ so that $\bb{L}_{\cu{E}}^{\infty}\subset\Lambda_{\bb{L}_{\cu{E}}^{\trop}}^{\infty}$.
	\end{theorem}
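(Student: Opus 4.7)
The plan is to assemble the statement by invoking Kaneyama's classification \cite{Kaneyama_classification} to reduce to three families, then to construct an explicit embedded Lagrangian multi-section for each family, and finally to identify its mirror via the equivariant rigidity Lemma \ref{lem:equiv_rigid}. By Kaneyama's classification, every rank 2 indecomposable toric vector bundle on $\bb{P}^2$ is isomorphic to exactly one of $E_{a,b,c}(D)$ or $E_{a,b,c}^*(D)$ for some $a,b,c\in\bb{Z}_{>0}$ and a toric divisor $D$, so it suffices to produce the desired multi-section in each of these three cases (the third being the twist of either family by a nontrivial $D$).

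For the untwisted case $\cu{E}=E_{a,b,c}$, Theorem \ref{thm:TGRC_E_abc} already furnishes an embedded, simply connected, canonically graded Lagrangian multi-section $\bb{L}_{a,b,c}$ with $\bb{L}_{a,b,c}^{\infty}\subset\Lambda_{\bb{L}_{a,b,c}^{\trop}}^{\infty}$, and Lemma \ref{lem:equiv_rigid} together with Corollary \ref{cor:MS_gives_bundle} forces $\cu{E}_{\bb{L}_{a,b,c}}\cong E_{a,b,c}$. For the dual case $\cu{E}=E_{a,b,c}^*$, I would apply the fiberwise involution $\alpha_M:(x,\xi)\mapsto(-x,-\xi)$ to $\bb{L}_{a,b,c}$. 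This produces $-\bb{L}_{a,b,c}$, whose associated tropical data is $-\bb{L}_{a,b,c}^{\trop}$, the tropical multi-section of $E_{a,b,c}^*$. Embeddedness, simple-connectivity, gradedness, and the asymptotic containment are preserved by $\alpha_M$, and equivariant rigidity again pins down the mirror to be $E_{a,b,c}^*$.

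For the twisted cases $E_{a,b,c}(D)$ (and symmetrically $E_{a,b,c}^*(D)$), I would first choose a smooth function $\varphi_D:N_{\bb{R}}\to\bb{R}$ that agrees with the piecewise-linear support function $\varphi_D^{\trop}$ outside a large disk, so that its differential asymptotes to the slopes $m_D(\tau)$ for each $\tau\in\Sigma$. Then I would set
\[
\bb{L}_{a,b,c}(D):=\left\{\left((p_{std}^{*})^{-1}\,d\bigl(\til{\varphi}_{a,b,c}+\varphi_D\circ p_{std}\bigr)(l),\,p_{std}(l)\right)\in Y : l\in\bb{R}^{2}\right\}.
\]
Since $\varphi_D\circ p_{std}$ is a single-valued smooth function pulled back via $p_{std}$, adding its differential to the multi-section $\bb{L}_{a,b,c}$ amounts to a fiberwise translation in the $M_{\bb{R}}$-direction by a smooth (single-valued) 1-form; this preserves embeddedness, simple connectivity, and the fact that $\bb{L}_{a,b,c}(D)$ is a Lagrangian multi-section. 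The asymptotic condition $\bb{L}_{a,b,c}(D)^{\infty}\subset\Lambda_{\bb{L}_{a,b,c}^{\trop}(D)}^{\infty}$ follows because the slopes of $\til{\varphi}_{a,b,c}$ shift by $m_D(\tau)$ on each asymptotic ray $\tau$.

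Finally, to identify the mirror of $\bb{L}_{a,b,c}(D)$ with $E_{a,b,c}(D)$, I would invoke Corollary \ref{cor:MS_gives_bundle} to conclude the mirror is a toric vector bundle $\cu{E}_{\bb{L}_{a,b,c}(D)}$, observe from the asymptotic condition together with the microlocal stalk computation of Theorem \ref{thm:degree0} that $\bb{L}_{\cu{E}_{\bb{L}_{a,b,c}(D)}}^{\trop}=\bb{L}_{a,b,c}^{\trop}(D)$, and then apply Lemma \ref{lem:equiv_rigid} to deduce $\cu{E}_{\bb{L}_{a,b,c}(D)}\cong E_{a,b,c}(D)$, and similarly $E_{a,b,c}^*(D)$ for $-\bb{L}_{a,b,c}(D)$. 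The main (and only mildly subtle) step is verifying that the equivariant rigidity lemma applies after twisting by $D$; this is handled by the same comparison of slopes of the support functions as in the proof of Lemma \ref{lem:equiv_rigid}, noting that the tropical data of $E_{a,b,c}(D)$ and $E_{a',b',c'}(D')$ coincide only when $(a,b,c)=(a',b',c')$ and $D=D'$. Everything else is bookkeeping on the explicit formulas, so no new holomorphic disk analysis is required beyond what was already established in Theorem \ref{thm:unobs_immersed_Lag}.
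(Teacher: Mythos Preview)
Your proposal is correct and follows essentially the same route as the paper: reduce via Kaneyama's classification to $E_{a,b,c}$, its dual, and their twists by a toric divisor; handle $E_{a,b,c}$ by Theorem \ref{thm:TGRC_E_abc}, the dual by the involution $(x,\xi)\mapsto(-x,-\xi)$, and the twist by adding $d(\varphi_D\circ p_{std})$; then identify the mirror via Corollary \ref{cor:MS_gives_bundle} and the equivariant rigidity Lemma \ref{lem:equiv_rigid}. The only caveat is a sign convention in your formula for $\bb{L}_{a,b,c}(D)$ (the paper uses $-p_{std}(l)$ in the second slot to land in $\Lambda_{\bb{L}^{\trop}}^{\infty}$ rather than $\alpha(\Lambda_{\bb{L}^{\trop}})^{\infty}$), but this is bookkeeping and does not affect the argument.
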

	
	\begin{remark}
		The rank 2 toric vector bundle $E_{m,n}$ over $\bb{P}^2$ considered in \cite{CMS_k3bundle} is actually isomorphic to $E_{m-n,m-n,m-n}((2n-m)D_0)$ and so it has mirror $\bb{L}_{m-n,m-n,m-n}((2n-m)D_0)$.
	\end{remark}

	\begin{remark}
		Let $\ol{Y}:=Y/M$ be the obvious quotient. Theorem \ref{thm:general_case} and the non-equivariant HMS for $\bb{P}^2$ suggest that the non-equivariant mirror of a rank 2 toric vector bundle $\cu{E}$ is given by the quotient $\ol{\bb{L}}_{\cu{E}}\subset\ol{Y}$, which can be immersed (after Hamiltonian perturbation) in general. As $\Ext^1_T(\cu{E},\cu{E})=0$ when $\cu{E}$ is indecomposable, we see that the immersed sector of $\ol{\bb{L}}_{\cu{E}}$ are responsible for the whole $\Ext^1(\cu{E},\cu{E})$, which can be arbitrarily large as $a,b,c\to\infty$.
	\end{remark}

\appendix

\section{Nadler's generation result for the immersed Fukaya category}
\label{appendix:nadler}

In this section, we provide some more explanation on how Nadler's proof
in \cite[Section 4]{Nadler} applies to the case of \emph{tautologically unobstructed} immersed Lagrangians with clean self-intersection in the cotangent bundle $T^*X$ for a general compact manifold $X$. To fit in our context, for a finite collection of Lagrangian immersions in $T^*M_{\bb{R}}$ with compact support in the $M_{\bb{R}}$-direction, we regard them as Lagrangian immersions in $T^*M_{\bb{R}}/\varepsilon^{-1}M$, for some small $\varepsilon>0$, so that no extra immersed points are created under the quotient. Their Floer theory are, by definition, the one defined in $T^*M_{\bb{R}}/\varepsilon^{-1}M$. See \cite{CCC_HMS}.

Let $\cu{F}uk(T^*X)$ be the infinitesimally wrapped tautologically unobstructed immersed Fukaya category, whose objects are tautologically unobstructed Lagrangian immersions with clean self-intersection and Legendrian boundary condition on $S^*X$. The main argument in Nadler's proof is to express the Yoneda image 
$$
\cu{Y}_{\bb{L}}: = \Hom_{\cu{F}uk(T^*X)}(\cdot,\bb{L})
$$
as a twisted complex with terms $\Hom_{\cu{F}uk(T^*X)}(\alpha_X(\cdot), L_{\tau_{\mathfrak a}*})$
(See \cite[Section 4.5]{Nadler}.), where $\alpha_X$ is the involution $(x,\xi)\mapsto(x-\xi)$. We summarize the key ingredients of Nadler's argument into three lemmas. For a general submanifold $Y\subset X$ and a smooth function $m:X\to\bb{R}_{\geq 0}$ whose zero set is given by $\ol{Y}\backslash Y$, define the corresponding standard brane $L_{Y,m*}$ by
$$L_{Y,m*}:=N_{Y/X}^*+\Gamma_{d\log(m)}\subset T^*X,$$
where $N_{Y/X}^*$ is the conormal bundle of $Y$. Different choices of $m$ lead to isomorphic brane \cite[Theorem 7.0.3]{NZ}, so we write $L_{Y*}$ for the standard brane associated to $Y$. The diagonal brane
$$
L_{\Delta_X*}:=\{(x,\xi,x,-\xi)\in T^*X\times T^*X\},
$$
is the standard brane associated to the diagonal $\Delta_X\subset X\times X$. We also take a triangulation $\cu{T}$ of $X$ and denote by $\tau_a\in\cu{T}$ a cell thereof. For $\tau_a\in\cu{T}$ and $\Delta_{\tau_a}\subset X\times X$, we have the corresponding standard branes
$L_{{\tau_a}*}\subset T^*X$ and $L_{\Delta_{\tau_a}*}\subset T^*(X\times X)\cong T^*X\times T^*X$. Finally, for a point $x_a\in\Int(\tau_a)$, the standard brane $L_{\{x_a\}*}$ is nothing but the cotangent fibre of $T^*X$ at $x_a$.

\begin{lemma}\label{lem:A1}
    For any $\bb{L}_1,\bb{P}_1\in\cu{F}uk(T^*X_1)$ and $\bb{L}_2,\bb{P}_2\in\cu{F}uk(T^*X_2)$, there exists a 
    quasi-isomorphism
    $$\Hom_{\cu{F}uk(T^*X_1\times T^*X_2)}(\bb{L}_1\times\bb{P}_1,\bb{L}_2\times\bb{P}_2)\simeq \Hom_{\cu{F}uk(T^*X_1)}(\bb{L}_1,\bb{P}_1)\otimes \Hom_{\cu{F}uk(T^*X_2)}(\bb{L}_2,\bb{P}_2).$$
\end{lemma}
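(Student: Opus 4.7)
The plan is to realize the Floer theory on $T^*X_1\times T^*X_2$ via product Hamiltonians and product almost complex structures, and then to check that every relevant geometric object decomposes as a product. I first choose controlled Hamiltonian functions $H_i$ on $T^*X_i$ and time-dependent almost complex structures $J_i$ tame to the standard symplectic forms, and use the sum $H:=H_1\boxplus H_2$ and the product $J:=J_1\times J_2$ on $T^*X_1\times T^*X_2$ to define the Floer data for the pair $(\bb{L}_1\times\bb{P}_1,\bb{L}_2\times\bb{P}_2)$. (Here I am interpreting $\bb{L}_1\times\bb{P}_1$ etc.\ as the relevant product Lagrangians in $T^*X_1\times T^*X_2$.) Under generic choices, the perturbed intersection set splits as a Cartesian product of the perturbed intersections in each factor, so at the level of cochain groups one gets an isomorphism
\begin{equation*}
CF^{\bullet}(\bb{L}_1\times\bb{P}_1,\bb{L}_2\times\bb{P}_2)\cong CF^{\bullet}(\bb{L}_1,\bb{P}_1)\otimes_{\bb{K}} CF^{\bullet}(\bb{L}_2,\bb{P}_2)
\end{equation*}
which respects the $\bb{Z}$-grading because the Maslov--Viterbo index is additive under the product.

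Next, I would verify that the $J$-holomorphic strip equation on $T^*X_1\times T^*X_2$ splits: a map $u=(u_1,u_2)$ is $J$-holomorphic if and only if each $u_i$ is $J_i$-holomorphic, and the Lagrangian boundary conditions for a product Lagrangian impose the two individual boundary conditions separately. Hence the moduli spaces satisfy
\begin{equation*}
\cu{M}(p_1\otimes p_2,q_1\otimes q_2)\cong\cu{M}(p_1,q_1)\times\cu{M}(p_2,q_2)
\end{equation*}
after picking regular data, and a standard argument (see e.g.\ Amorim--Oh or Ganatra's thesis) shows that a product of two regular Floer data is regular for the product Lagrangians up to a small generic perturbation consistent with the product structure. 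Writing down the resulting Floer differential using the Leibniz rule on the tensor product, one obtains the desired chain-level isomorphism.

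To upgrade this to a quasi-isomorphism in $\cu{F}uk(T^*X_1\times T^*X_2)$, one must control two further points. First, since we are working with immersed Lagrangians with clean self-intersection, the self-intersection set of $\bb{L}_1\times \bb{L}_2$ is the union $(I_{\bb{L}_1}\times \bb{L}_2)\cup(\bb{L}_1\times I_{\bb{L}_2})$ together with $I_{\bb{L}_1}\times I_{\bb{L}_2}$, so the de Rham part of the Floer cochain complex is $C^{\infty}(\tilde{\bb{L}}_1\times\tilde{\bb{L}}_2)$, which by a Hochschild--Kostant--Rosenberg / K\"unneth argument splits as the tensor product of the individual de Rham complexes; and the immersed generators factor in the same way. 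Second, because both $\bb{L}_i$ and $\bb{P}_i$ are tautologically unobstructed, the bounding cochain on each side is $0$, and holomorphic disks with boundary on a product Lagrangian split into pairs of disks by the same projection argument, so no disk-bubble corrections appear. The only subtle step is checking the $C^0$ estimate for $J$-holomorphic strips in the noncompact direction; this follows from Remark \ref{rem:choiceJ} applied to each factor separately, together with the tameness condition in each $T^*X_i$.

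The main obstacle I expect is precisely the transversality/regularity of the product Floer data: the standard trick of splitting the perturbation between the two factors requires that generic perturbations within the product class are enough to cut out all strips transversely, which in the tautologically unobstructed immersed setting requires a careful use of domain-dependent almost complex structures adapted to the Kuranishi structure of $\ol{\cu{M}}_{k+1}(Y,\bb{L};\beta)$ described in Section \ref{sec:microlocalization-over-Novikov}. Once this is in place, all higher $A_{\infty}$-operations assemble into the tensor product $A_{\infty}$-structure on the right-hand side, yielding the quasi-isomorphism at the level of morphism spaces.
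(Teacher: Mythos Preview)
Your sketch is correct and follows essentially the same route the paper takes: the paper does not give a self-contained proof of Lemma~\ref{lem:A1} either, but simply invokes Fukaya's K\"unneth formula for immersed Lagrangian Floer theory \cite[Corollary~16.10]{fukaya_immersed} and defers the remaining details to a forthcoming paper. The key technical point the authors single out is precisely the one you flag as ``the main obstacle'': the product of two immersed Lagrangians with transverse double points has only \emph{clean} (positive-dimensional) self-intersections, so one must work in Fukaya's enlarged framework allowing clean self-intersection rather than in the setting of Definition~\ref{defn:lag-immersion}; your discussion of the de~Rham/K\"unneth splitting of the self-intersection locus and of the transversality of product Floer data is exactly the content that has to be supplied there.
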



\begin{lemma}\label{lem:A3}
    The brane $\alpha_{X\times X}(L_{\Delta_X*})\subset T^*(X\times X)$ can be written as a twisted complex of standard objects of the form $L_{\Delta_{\tau_a}*}$, $\tau_a\in\cu{T}$.
\end{lemma}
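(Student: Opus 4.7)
The plan is to transfer the statement to the constructible sheaf side and then pull it back via microlocalization. Under the quasi-equivalence $\mu:Sh_{cc,\bb{K}}(X\times X)\to \cu{F}uk_{\bb{K}}^0(T^*(X\times X))$ of Proposition \ref{prop:surjective} (applied to $X\times X$ in place of $M_{\bb{R}}$), the standard brane $L_{\Delta_X*}$ corresponds to $(i_{\Delta_X})_*\ul{\bb{K}}_{\Delta_X}$ and the antipodal image $\alpha_{X\times X}(L_{\Delta_X*})$ corresponds, up to shift, to the costandard object $(i_{\Delta_X})_!\ul{\bb{K}}_{\Delta_X}[\dim X]$ (this is the sheaf-theoretic counterpart of the identification $L_{U!}=\alpha_X(L_{U*})[n]$). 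Thus it suffices to build a resolution of $(i_{\Delta_X})_!\ul{\bb{K}}_{\Delta_X}$ whose terms are the costandard sheaves $(i_{\Delta_{\tau_a}})_!\ul{\bb{K}}_{\Delta_{\tau_a}}$, and then apply $\mu$ and the involution $\alpha_{X\times X}$ to convert everything into standard branes $L_{\Delta_{\tau_a}*}$.

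The cellular input is provided by the triangulation $\cu{T}$ of $X$ itself. The induced cellular decomposition $\Delta_X=\bigsqcup_{\tau_a\in\cu{T}}\Delta_{\tau_a}$ yields a filtration of $\Delta_X$ by skeleta $\Delta_X^{(0)}\subset\Delta_X^{(1)}\subset\cdots\subset\Delta_X^{(\dim X)}=\Delta_X$, which in turn produces the standard cellular resolution
\[
\cdots\longrightarrow\bigoplus_{\dim\tau_a=k}(i_{\Delta_{\tau_a}})_!\ul{\bb{K}}_{\Delta_{\tau_a}}\longrightarrow\bigoplus_{\dim\tau_a=k-1}(i_{\Delta_{\tau_a}})_!\ul{\bb{K}}_{\Delta_{\tau_a}}\longrightarrow\cdots\longrightarrow(i_{\Delta_X})_!\ul{\bb{K}}_{\Delta_X}\longrightarrow 0,
\]
with differentials given by the signed sums of coface maps of $\cu{T}$. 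Applying $\mu$ term-by-term gives a twisted complex in $\cu{F}uk_{\bb{K}}^0(T^*(X\times X))$ whose terms are the costandard branes $L_{\Delta_{\tau_a}!}$ and whose structure maps are the images, under the fully faithful Morse-theoretic identification, of the cellular differentials. Applying $\alpha_{X\times X}$ once more to the whole twisted complex converts each $L_{\Delta_{\tau_a}!}$ into $L_{\Delta_{\tau_a}*}$ (up to an overall shift) while preserving the structure maps, and yields the desired presentation of $\alpha_{X\times X}(L_{\Delta_X*})$.

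The main technical obstacle will be the bookkeeping of gradings, Maslov shifts, and spin/sign conventions as one passes between $L_{\Delta_{\tau_a}!}$ and $L_{\Delta_{\tau_a}*}$ and transports the cellular differentials into genuine $A_\infty$ maps; this is exactly the combinatorial content underlying Nadler's construction in \cite[Section~4]{Nadler}, which carries over verbatim once the coefficient field has been enlarged to $\bb{K}$. A secondary point to be verified is that $\cu{T}$ may be chosen fine enough that every $L_{\Delta_{\tau_a}*}$ is $\Lambda_{\cu{T}\times\cu{T}}$-admissible, ensuring that the twisted complex lives in the relevant admissible subcategory. Since $\mu$ is a quasi-equivalence by Proposition \ref{prop:surjective} and the Künneth-type decomposition of Lemma \ref{lem:A1} is compatible with the cellular resolution, no new Floer-theoretic input is required beyond what is already established.
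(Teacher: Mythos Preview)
Your approach—transporting a cellular resolution of the diagonal from the sheaf side via microlocalization—is in spirit the same as Nadler's argument in \cite[Section~4.3]{Nadler}, which is all the paper invokes. However, there is a logical circularity in your presentation: you appeal to Proposition~\ref{prop:surjective} (the quasi-equivalence of $\mu$ on the tautologically unobstructed category), but Lemma~\ref{lem:A3} is one of the ingredients the appendix assembles precisely in order to \emph{prove} the essential-surjectivity half of that proposition. What your argument actually uses is only that $\mu$ is an $A_\infty$-functor and is \emph{fully faithful} on exact embedded branes, which is Nadler--Zaslow \cite{NZ} and is established independently of, and prior to, any diagonal resolution. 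Since every object appearing in Lemma~\ref{lem:A3} is exact and embedded, citing \cite{NZ} in place of Proposition~\ref{prop:surjective} removes the circularity, and this is exactly why the paper can say the lemma ``applies verbatim'' from Nadler.

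A smaller point: your last step, applying $\alpha_{X\times X}$ to the twisted complex of costandard branes $L_{\Delta_{\tau_a}!}$, yields a resolution of $\alpha_{X\times X}(\alpha_{X\times X}(L_{\Delta_X*}))=L_{\Delta_X*}$, not of $\alpha_{X\times X}(L_{\Delta_X*})$. This is harmless here because $\Delta_X\subset X\times X$ is closed, so $L_{\Delta_X*}=N^*_{\Delta_X}$ is $\alpha$-invariant as a set and the two differ only by a grading shift; but it would be cleaner to resolve the standard sheaf $(i_{\Delta_X})_*\ul{\bb{K}}_{\Delta_X}$ directly by the standard cell sheaves $(i_{\Delta_{\tau_a}})_*\ul{\bb{K}}_{\Delta_{\tau_a}}$ and avoid the detour through costandard objects and the extra application of $\alpha$.
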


\begin{lemma}\label{lem:A4}
    For any $\bb{L}_1,\bb{L}_2\in\cu{F}uk(T^*X)$ and a fine enough triangulation $\cu{T}$ of $X$ such that $L_1^{\infty},L_2^{\infty}\subset\Lambda_{\cu{T}}^{\infty}$, there is a quasi-isomorphism
    $$
    \Hom_{\cu{F}uk(T^*X\times T^*X)}(\bb{L}_1\times\bb{L}_2,L_{\Delta_{\tau_a}*})\simeq \Hom_{\cu{F}uk(T^*X)}(\bb{L}_1,L_{\{x_a\}*})\otimes \Hom_{\cu{F}uk(T^*X)}(\bb{L}_2,L_{\tau_a*})
    $$
for each cell $\tau_a\in\cu{T}$.
\end{lemma}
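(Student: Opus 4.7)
The strategy is to reduce Lemma \ref{lem:A4} to the K\"unneth-type formula of Lemma \ref{lem:A1} by deforming the diagonal standard brane $L_{\Delta_{\tau_a}*}$ into the product brane $L_{\{x_a\}*}\times L_{\tau_a*}$ within $T^*(X\times X)\cong T^*X\times T^*X$, using the localization afforded by the asymptotic hypothesis $L_i^\infty\subset\Lambda_{\cu{T}}^\infty$.

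First I would localize. Since $L_{\Delta_{\tau_a}*}=N^*_{\Delta_{\tau_a}/X\times X}+\Gamma_{d\log m_{\Delta_{\tau_a}}}$ has base support concentrated near $\Delta_{\tau_a}$, and $\bb{L}_i$ are constructible-compatible with $\cu{T}$, all intersections of $\bb{L}_1\times\bb{L}_2$ with $L_{\Delta_{\tau_a}*}$ and the contributing pseudoholomorphic strips are confined to a tubular neighborhood of $\Delta_{\tau_a}$ in $T^*(X\times X)$. Choose adapted coordinates $(u,v)$ on $X$ near $x_a\in\Int(\tau_a)$ with $u$ parametrizing $\tau_a$ and $v$ transverse. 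The splitting into tangential and normal blocks decomposes both Lagrangians under comparison into a $u$-block in $T^*\tau_a\times T^*\tau_a$ and a normal block; in the normal block, both Lagrangians agree as a perturbation of the product $N^*_{\tau_a/X}\times N^*_{\tau_a/X}$.

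In the $u$-block, the restriction of $L_{\Delta_{\tau_a}*}$ is the diagonal conormal $\{(u,\xi,u,-\xi)\}$, while that of $L_{\{x_a\}*}\times L_{\tau_a*}$ is $T^*_{x_a}\tau_a\times 0_{\tau_a}=\{(x_a,\xi,u,0)\}$. These two linear Lagrangians are related by the affine symplectomorphism $\Phi(u_1,\xi_1,u_2,\xi_2)=(u_1-u_2,\xi_1,u_2,\xi_1+\xi_2)$, generated by the bilinear Hamiltonian $H=\xi_1\cdot(u_2-u_1)$. Suitably cutting off $\Phi$ outside the localized region yields a compactly supported Hamiltonian isotopy carrying $L_{\Delta_{\tau_a}*}$ to $L_{\{x_a\}*}\times L_{\tau_a*}$, which preserves the Floer cohomology. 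Applying Lemma \ref{lem:A1} to the resulting product brane then gives
\begin{equation*}
\Hom(\bb{L}_1\times\bb{L}_2,L_{\Delta_{\tau_a}*})\simeq \Hom(\bb{L}_1,L_{\{x_a\}*})\otimes\Hom(\bb{L}_2,L_{\tau_a*}).
\end{equation*}

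The principal obstacle is justifying the cutoff. Globally, $L_{\Delta_{\tau_a}*}$ and $L_{\{x_a\}*}\times L_{\tau_a*}$ are topologically distinct, so no global Hamiltonian isotopy between them exists, and one must ensure the local cutoff does not introduce spurious intersections or disk bubbles outside the neighborhood of $\Delta_{\tau_a}$. This relies on the cofinal wrapping argument of \cite[Section 4]{Nadler}, which confines all Floer-theoretic contributions to the linear model where the deformation is valid. In our tautologically unobstructed setting the argument is unaffected by the immersed nature of $\bb{L}_i$: the standard branes involved are exact and bound no non-constant pseudoholomorphic disks, and since $b=0$ is by assumption a valid bounding cochain for $\bb{L}_i$, the local Floer complex matches that of the embedded case.
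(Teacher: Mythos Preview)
Your proposal is correct and follows essentially the same approach as the paper, which defers entirely to \cite[Proposition 4.4.1]{Nadler}: deform $L_{\Delta_{\tau_a}*}$ to the product $L_{\{x_a\}*}\times L_{\tau_a*}$ via a non-characteristic isotopy (exploiting the hypothesis $L_i^{\infty}\subset\Lambda_{\cu{T}}^{\infty}$) and then invoke the K\"unneth formula of Lemma~\ref{lem:A1}. Your explicit local model and your observation that the two branes are not globally Hamiltonian isotopic, with the resolution via Nadler's localization/non-characteristic argument, is precisely the content of that proposition; the paper adds nothing beyond the remark that the argument goes through unchanged in the tautologically unobstructed immersed setting once Lemma~\ref{lem:A1} is available.
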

Assuming these lemmas, for any object $\bb{L}\in\cu{F}uk(T^*X)$ and test object $\bb{P}\in\cu{F}uk(T^*X)$, we have
$$\Hom_{\cu{F}uk(T^*X)}(\bb{P},\bb{L})\simeq\left(\bigoplus_{\alpha}\Hom_{\cu{F}uk(T^*X)}(\alpha_X(\bb{L}),L_{\{x_{a_{\alpha}}\}*})\otimes \Hom_{\cu{F}uk(T^*X)}(\bb{P},L_{\tau_{a_{\alpha}}*}),(\delta_{\alpha\beta})_{\alpha<\beta}\right),
$$
i.e., $\Hom_{\cu{F}uk(T^*X)}(\bb{P},\bb{L})$ can be expressed as a twisted complex with 
terms the Floer complexes
$$\Hom_{\cu{F}uk(T^*X)}(\alpha_X(\bb{L}), L_{\tau_{a_{\alpha}}})$$
for some
collection $\{a_{\alpha}\}_\alpha$. This shows that the microlocal functor $Sh_{cc}(X)\to\cu{F}uk(T^*X)$ is essentially surjective.

We now give reasons for the validity of each lemma.  Once Lemma \ref{lem:A1} is established, Lemma \ref{lem:A3} and \ref{lem:A4} apply verbatim without any change from Nadler's original paper 
\cite[Section 4.3 and Proposition 4.4.1]{Nadler}.
Indeed, Lemma \ref{lem:A1} follows from the K\"unneth formula provided in \cite[Corollary 16.10]{fukaya_immersed}: The only thing we need to make sure is that 
the product $\bL_1 \times \bL_2$ of two objects be an object of the $\cu{F}uk(T^*X_1 \times T^*X_2)$ 
when each $\bL_i$ is an object of $\cu{F}uk(T^*X_i)$ for each $i = 1,2$ respectively. 
Since the product  of two immersed (non-embedded) Lagrangians do
not have transverse self-intersections but only clean self-intersections of higher dimensions, the product
$\bL \times \bP$  do not satisfy the transversal self-intersection hypothesis imposed in 
Definition \ref{defn:lag-immersion}. Because of this, to ensure 
Lemma \ref{lem:A1}, we need to employ Fukaya's framework of immersed Lagrangian Floer theory whose objects consist of immersed Lagrangian submanifolds \emph{with clean self-intersections}
\cite[Theorem 16.9]{fukaya_immersed}.  A full explanation on how this Fukaya's enlarged setting 
 of immersed Lagrangian Floer theory in \cite{fukaya_immersed} gives rise to the proof of Lemma \ref{lem:A1}
 will be given in \cite{OS_generation} while we perform a microlocalization of 
the immersed Lagrangian Floer theory over the Novikov field $\bb{K}$.

\bibliographystyle{amsalpha}
\bibliography{geometry-oh}

\end{document}